\documentclass[11pt,letterpaper]{amsart}

\usepackage[T1]{fontenc}
\usepackage{lmodern}
\usepackage[expansion=false]{microtype}
\usepackage{amsmath,amssymb,mathtools,mathrsfs}
\usepackage[margin=1.08in]{geometry}
\usepackage{xcolor}
\usepackage[colorlinks=true,linkcolor=blue,citecolor=red,urlcolor=blue]{hyperref}

\numberwithin{equation}{section}
\theoremstyle{plain}
\newtheorem{theorem}{Theorem}[section]
\newtheorem{maintheorem}{Theorem}

\newtheorem{proposition}[theorem]{Proposition}
\newtheorem{lemma}[theorem]{Lemma}
\newtheorem{corollary}[theorem]{Corollary}
\newtheorem{definition}[theorem]{Definition}
\newtheorem{example}[theorem]{Example}
\newtheorem{question}[theorem]{Question}
\newtheorem{remark}[theorem]{Remark}

\newcommand{\C}{\mathbb C}

\newcommand{\PP}{\mathbb P}
\newcommand{\OO}{\mathcal O}
\newcommand{\rk}{\operatorname{rank}}

\newcommand{\Levi}{\operatorname{Levi}}

\newcommand{\Rea}{\operatorname{Re}}
\newcommand{\diag}{\operatorname{diag}}

\newcommand{\db}{\bar\partial}
\newcommand{\cH}{\mathscr H}
\newcommand{\cR}{\mathcal R}
\newcommand{\cC}{\mathcal C}

\hypersetup{
 pdftitle={Dolbeault Cohomology Growth under Curvature Rank Bounds},
 pdfauthor={Xiangsen Qin},
 pdfsubject={Dolbeault cohomology growth, curvature rank, and projective maps},
 pdfkeywords={curvature rank, Dolbeault cohomology, finite holomorphic localization, compact complex manifold}
}

\begin{document}

\title[Cohomology growth and curvature rank]
{Dolbeault Cohomology Growth under Curvature Rank Bounds}
\author[X. Qin]{Xiangsen Qin}
\address{Xiangsen Qin: Chern Institute of Mathematics and LPMC, Nankai University \\
Tianjin 300071, China}
\email{qinxiangsen@nankai.edu.cn}

\begin{abstract}
Let $L$ be a smooth Hermitian holomorphic line bundle on a
compact complex manifold $X$ of dimension $n$, and let $r$ be its
maximal curvature rank. For every fixed holomorphic vector bundle $E$, we prove
$h^q(X,L^p\otimes E)=O_\varepsilon(p^{r+\varepsilon})$ for every
$\varepsilon>0$ and every Dolbeault degree, with the sharp bounds
$h^0,h^n=O(p^r)$ at the endpoints. The curvature may have variable
rank and arbitrary signature, and no global foliation or K\"ahler
assumption is imposed. The all-degree estimate follows from local
rescaling and holomorphic restriction estimates that remain uniform
across rank changes. An independent maximum-principle argument
locates the maximal rank of a section map at a semipositive curvature
point; it yields $\kappa(L|_{X_0})\le r$ on each connected component
$X_0$ and gives the endpoint bounds.
\end{abstract}

\subjclass[2020]{Primary 32L10; Secondary 32J18, 32J10, 32J25, 32W05.}
\keywords{Curvature rank, Dolbeault cohomology, finite holomorphic localization,
compact complex manifolds.}

\maketitle
\enlargethispage{4pt}
\setcounter{tocdepth}{1}
\tableofcontents

\section{Introduction}\label{sec:introduction}

Let $L$ be a holomorphic line bundle on a compact complex manifold.
We study the growth of its Dolbeault cohomology in terms of the
maximal rank $r$ of a smooth curvature form. The main difficulty is
that the rank may vary: the curvature kernel need not extend
continuously across a rank drop, let alone define a global
foliation (Example~\ref{ex:variable-rank}). Our main result gives a
rank-dependent power bound in every degree without such a foliation.
An independent geometric argument gives sharp endpoint bounds and
controls the rank of section maps.

Ma--Marinescu--Zelditch
\cite[\S1.2, p.~181, after (1.30)]{MMZ2015} conjecture the following.
Let $X$ be a compact complex manifold of dimension $n$, let $(L,h)$
be a smooth Hermitian holomorphic line bundle, and let $E$ be a fixed
holomorphic vector bundle on $X$. If $\rk\Theta_L\le n-s$
everywhere for an integer $1\le s\le n$, then, for every
nondecreasing positive function $a(p)\to\infty$,
\[
 h^q(X,L^p\otimes E)\le a(p)^s p^{n-s}
 \qquad(0\le q\le n)
\]
for all sufficiently large integers $p$, with the threshold depending
on $a$, $s$, and the fixed geometric data. For $s=n-r\ge1$, this quantifier
is equivalent to $h^q(X,L^p\otimes E)=O(p^r)$;
Section~\ref{sec:remaining-question} records the argument.

Throughout the paper, constants in cohomology estimates may depend
on the fixed geometric data $(X,L,h,E)$ and are uniform in the
integer $p\ge1$ and in $0\le q\le\dim X$. Subscripts indicate
additional dependence, for example on $\varepsilon$ or on a fixed
integer $k$.

\subsection{Main results}

\begin{samepage}
\begin{maintheorem}\label{thm:main}\label{fi:thm:iteration}
Let $X$ be a compact complex manifold of dimension $n$, let $(L,h)$
be a smooth Hermitian holomorphic line bundle, and let $E$ be a fixed
holomorphic vector bundle on $X$. Set
$r=\max_{x\in X}\rk(\Theta_L)_x$. Then, for every $\varepsilon>0$,
\begin{equation}\label{eq:near-endpoint}
 h^q(X,L^p\otimes E)=O_\varepsilon(p^{r+\varepsilon})
 \qquad(0\le q\le n),
\end{equation}
and $h^0(X,L^p\otimes E),h^n(X,L^p\otimes E)=O(p^r)$.
\end{maintheorem}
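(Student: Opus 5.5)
The plan is to prove the all-degree estimate by a holomorphic Morse-type localization: every cohomology class is controlled by finitely many local data, and the number of independent data is counted on a scale adapted to the curvature. We would realize $H^q(X,L^p\otimes E)$ by $\db$-closed forms, fix a Hermitian metric $\omega$ on $X$, and use $L^2$ Dolbeault/Hodge theory with respect to $h^p\otimes h_E$ and $\omega$. Harmonic representatives need not be controlled pointwise, since $X$ is not K\"ahler and $\Theta_L$ has no sign. We therefore aim for a restriction/injectivity statement instead. Cover $X$ by finitely many coordinate polydiscs. On each polydisc, at a point $x$ with $\rk(\Theta_L)_x=k\le r$, choose coordinates $(z',z'')\in\C^k\times\C^{n-k}$ in which $\Theta_L$ is nondegenerate in the $z'$-directions. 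Rescale $z'\mapsto z'/\sqrt p$ and $z''\mapsto z''/p^{\delta}$, with small $\delta>0$ chosen depending on $\varepsilon$. On the rescaled polydisc the weight $p\varphi$ is, up to a pluriharmonic part, $\sum|z'_j|^2$-like in $z'$ and of size $o(1)$ in $z''$. The reason is that the curvature in the $z''$-directions is $O(|z'|+|z''|)$-small, because the rank cannot exceed $r$ near $x$ only up to a controlled error. Dolbeault classes are then represented by forms that are almost holomorphic in the rescaled variables.

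The key steps, in order, are as follows.
\begin{enumerate}
\item \textbf{Local vanishing/solvability.} Using H\"ormander $L^2$ estimates on the rescaled pieces, prove that a $\db$-closed form whose holomorphic ``jets'' of order $\le N$ along a finite net of points vanish is $\db$-exact locally. The error is absorbed by a Mayer--Vietoris (\v{C}ech) patching.
\item \textbf{Counting.} Each rescaled cell in $z'$ has unit size, so $X$ is covered by about $p^{k}\cdot p^{2\delta(n-k)}$ cells. Each cell contributes a bounded number $C_N$ of jet functionals. This gives $h^q\le C p^{k+2\delta(n-k)}$. The injectivity of $H^q\to\bigoplus(\text{local data})$ comes from a \v{C}ech double complex: the $q$-th cohomology is bounded by the sum of the dimensions of the local cohomology of the finite-dimensional truncations.
\item \textbf{Uniformity across rank changes.} At points where the rank drops, the cell size in the degenerate directions is the worse scale $p^{-\delta}$ rather than $p^{-1/2}$. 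Since $k\le r$ everywhere, the total count remains $O(p^{r+2\delta n})$. Choosing $2\delta n<\varepsilon$ gives \eqref{eq:near-endpoint}.
\end{enumerate}

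\textbf{Main obstacle.} The main obstacle is step 1 near rank-drop points. There the splitting $z'$, $z''$ is not continuous, and the error terms from the non-K\"ahler torsion and from the off-diagonal curvature must be beaten by the $p^{-\delta}$ gain. We would first try fixing a \emph{finite} family of splittings, one per cell of a fixed cover, and allow $k$ to be the local maximal rank on that cell. This is legitimate because overestimating the rank only weakens the count, and it remains bounded by $r$.

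For the sharp endpoint bounds we would argue separately. By Serre duality, $h^n(L^p\otimes E)=h^0(L^{-p}\otimes E^*\otimes K_X)$, and $-\Theta_L$ has the same rank function. So it suffices to treat $h^0$ for both $L$ and $L^{-1}$. Take a basis of $H^0(X,L^p\otimes E)$ and the associated section map. A maximum principle applied to $\log$ of the Bergman-type density $\sum|s_j|^2_{h^p}$ locates a point $x_0$ where the section map has maximal rank $m$ and $i\Theta_L\ge0$ in the directions of that rank; at a maximum of the density, $i\partial\db\log\sum|s_j|^2\le0$ forces this. This shows that the section maps have rank at most $r$, so $\kappa(L|_{X_0})\le r$ on each component $X_0$. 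The standard Siegel-type argument then gives $h^0(X_0,L^p\otimes E)=O(p^{\kappa})=O(p^r)$. The passage from $L^p$ to $L^p\otimes E$ uses an embedding of $E$ into a sum of powers $L^{a}$ twisted by the pullback, or, more simply, the fibre dimension of $E$ times the same count.
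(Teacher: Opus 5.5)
There is a genuine gap in the all-degree part, and it is not confined to rank-drop points. Your local claim is that after the rescaling $z'\sim p^{-1/2}$, $z''\sim p^{-\delta}$ with small $\delta$, the weight $p\varphi$ is bounded modulo a pluriharmonic function. This is false even when the rank is constant. Take $n=2$ and $\varphi=|z_1+z_2^2|^2$. Its Levi form $(1,2z_2)^*(1,2z_2)$ has rank one everywhere, with kernel $\partial_{z_2}$ at $0$. On the curve $z_1=0$ inside your cell the weight is $p|z_2|^4$. Comparing circle means at radii $0$ and $\rho\approx p^{-\delta}$ shows that $p|z_2|^4-h$ has oscillation at least $p\rho^4\approx p^{1-4\delta}$ for every harmonic $h$.

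In general the Levi kernel bends. At a constant-rank center the off-diagonal block is $O(|z|)$, and the kernel block $D=B^*A^{-1}B$ has a generally nonzero quadratic part $B_1^*A_0^{-1}B_1$. These contribute $p^{1/2-2\delta}$ and $p^{1-4\delta}$; at a rank-drop center there is also $p^{1-3\delta}$. So even with regular centers you need $\delta\ge1/4$, which gives at best $p^{r+(n-r)/2}$. A fixed splitting on each cell of a fixed cover is worse still, since away from the center the $z''$ directions are not Levi-null at all.

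The missing idea is to localize along the curved kernel rather than in coordinate boxes. The paper picks, in each box of width $t=p^{-1/(2k+5)}$, a center of locally constant rank $m\le r$. It applies the parabolic scaling $(t\alpha+t^2u,tv)$ and shows that, modulo $2\Rea g$, the leading term is $t^4Q^*\Sigma Q$ with $m$ bounded holomorphic components $Q=Cu+G(v)$. This uses a Fekete factorization, so no bound on $A_0^{-1}$ is needed. It then grids the range of $Q$, not $z$.

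The remainder is lowered by iteration. Product maps $f$ with $Q\circ f=q(b)$ and $\dim b\le m$, obtained from Binyamini--Novikov cells, parametrize the fibres of $Q$. Only $b$ is gridded at mesh $t$, and each step divides the coefficient by $t^2$ at cost $t^{-2m}$. This telescopes to $(pt^2)^m$ restriction factors per box. The only loss is the outer factor $t^{-2(n-r)}=p^{2(n-r)/(2k+5)}$, made smaller than $p^\varepsilon$ by taking $k$ large.

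Your step 1 with bounded jet order $N$ also does not close. The patching operators are only polynomially bounded, so the paper takes $N\sim\log p$ in the extension/Taylor approximation and absorbs the resulting logarithms into $\varepsilon$.

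The endpoint part also has two gaps.

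\emph{The maximum need not sit at a maximal-rank point.} The maximum of $\sum_j|s_j|^2_{h^p}$ can occur where $d\Phi$ does not have maximal rank. At such a point the inequality $\Phi^*\omega_{\rm FS}\le p\Theta_L$ says nothing about $\kappa$. The paper resolves the section map to $f:Y\to\PP^N$ and takes a homogeneous $P$ vanishing on the critical values $f(C_f)$, which have dimension at most $d-1$, but not on $f(Y)$. It then maximizes $|P(s)|^2\sum_j|s_j|^2$ instead, which forces the maximum off the critical locus.

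\emph{The twist by $E$ is not handled.} Siegel's argument gives $h^0(L^p)=O(p^{\kappa(L)})$, not a bound on $h^0(L^p\otimes E)$. The shortcut ``$\rk E$ times the same count'' is false. For a flat nontorsion $L$ on an elliptic curve ($r=0$) and an ample $A$, one has $h^0(L^p)=0$ while $h^0(L^p\otimes A)=\deg A$. An embedding of $E$ into sums of powers of $L$ is not available for degenerate or indefinite $L$. The twist is unavoidable even for $E=\OO_X$, because Serre duality produces $E^*\otimes K_X$. The paper passes to $\PP(E^*)$ to get a line twist $A=\OO(1)$ and fixes one section of $L^a\otimes A$. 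It principalizes that section's divisor and runs the divisor filtration with induction on dimension. This works because restriction to smooth divisors does not raise curvature rank.
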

\end{samepage}

The theorem imposes no positivity, constant-rank, foliation, or
K\"ahler assumption. The analytic proof gives the following
quantitative estimate. In this proposition, $X,(L,h),E,n,r$ have
the meanings and satisfy the hypotheses specified in
Theorem~\ref{thm:main}.

\begin{samepage}
\begin{proposition}\label{thm:quantitative}
If $1\le r<n$, then for each integer $k\ge0$ there are constants
$C_k,N_k>0$ such that
\begin{equation}\label{fi:eq:iteration}
 h^q(X,L^p\otimes E)
 \le C_kp^{r+2(n-r)/(2k+5)}(1+\log p)^{N_k}
 \qquad(0\le q\le n,\ p\ge1).
\end{equation}
\end{proposition}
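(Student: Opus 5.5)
I would prove Proposition~\ref{thm:quantitative} by a Hodge-theoretic localization argument, with an iteration indexed by $k$. First fix Hermitian metrics on $X$ and $E$ and identify $H^q(X,L^p\otimes E)$ with the space $\cH^q_p$ of $\db$-harmonic $(0,q)$-forms. Then $h^q=\dim\cH^q_p\le\int_X B^q_p(x)\,dV(x)$, where $B^q_p(x)=\sup\{|u(x)|^2:u\in\cH^q_p,\ \|u\|_{L^2}=1\}$ is the Bergman density function. The goal is a pointwise bound for $B^q_p$ of the form $C p^{r+2(n-r)/(2k+5)}(\log p)^{N_k}$. The tool is a mean-value (sub-ellipticity) estimate for harmonic forms, applied after rescaling at a scale adapted to the curvature.

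Near a point $x$ with $\rk(\Theta_L)_x=\rho\le r$, choose holomorphic coordinates splitting into $\rho$ directions where $\Theta_L$ is nondegenerate and $n-\rho$ "flat" directions. Rescale anisotropically: by $p^{-1/2}$ in the nondegenerate directions, and by $\delta=p^{-\theta}$ with $\theta<1/2$ in the flat directions. On such a polydisc $P$ the rescaled operator $\db^*\db+\db\db^*$ has bounded coefficients, provided the curvature in the flat directions, of size $p\cdot(\text{eigenvalue})\cdot\delta^2$, stays $O(1)$. Elliptic estimates then give $|u(x)|^2\lesssim p^{\rho}\delta^{-2(n-\rho)}\int_P|u|^2$. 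Since the polydiscs centered at lattice points have bounded overlap, this yields a bound of order $p^{r}\delta^{-2(n-r)}$ times log factors. The log factors come from a dyadic decomposition according to the size of the small eigenvalues of $\Theta_L$.

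The iteration in $k$ is there to make $\delta$ as large as possible. The crude choice $\delta\sim p^{-1/2}$ only recovers $p^n$. To improve on this, I would use holomorphic restriction estimates. Restrict harmonic forms to the flat slices, where $L^p$ is nearly flat, and Taylor-expand in the flat variables to order $k$. The $k$-th order remainder is controlled by $p\,\delta^{2k+5}$-type quantities, because the curvature's variation along the slices enters through derivatives up to order $k+2$ of the potential. Requiring this remainder to be $O(1)$ forces $\delta^{2}\sim p^{-2/(2k+5)}$. Substituting gives $p^{r}\delta^{-2(n-r)}=p^{r+2(n-r)/(2k+5)}$. Meanwhile, the finite-dimensional space of degree-$\le k$ jets along the slices contributes only a constant $C_k$ and powers of $\log p$.

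The main obstacle is uniformity across rank changes. Near points where the rank drops, some "nondegenerate" eigenvalues become small, so the splitting into directions is not continuous (compare Example~\ref{ex:variable-rank}). My first attempt would be to handle this by decomposing $X$ dyadically according to the magnitude $\lambda\in[p^{-1},1]$ of the smallest retained eigenvalue. In each band I would treat directions with eigenvalue below $\lambda$ as flat, with scale $(p\lambda)^{-1/2}$ capped by $\delta$. I would then check that the sum over the $O(\log p)$ bands costs only the factor $(1+\log p)^{N_k}$. I expect this bookkeeping, together with constants in the restriction estimate that do not depend on the band, to be the delicate part.
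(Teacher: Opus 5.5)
There is a genuine gap in the local step your count rests on. You assume that near every point there are holomorphic coordinates and a holomorphic gauge in which $p\phi$ is bounded on polydiscs of width $p^{-1/2}$ in $m\le r$ directions and $\delta=p^{-1/(2k+5)}$ in the others, with the $C^2$ control your elliptic estimate needs. Nothing in your plan produces such domains.

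\emph{Even $k=0$ is not reached as stated.} Take a center of locally constant rank $m$. There the kernel block satisfies $D_0=D_1=0$ and $D_2=B_1^*A_0^{-1}B_1$, as in \eqref{fi:oa:Schur-jets}. In straight kernel coordinates the rescaled kernel and mixed entries are still of size $p\delta^4$ and $p^{1/2}\delta^2$ (already for $\phi=|x+y^2|^2$), so these coordinates allow only $\delta\lesssim p^{-1/4}$. Kernel-adapted holomorphic coordinates, such as $Q=Cu+G(v)$ with quadratic $G$ in Lemma~\ref{fi:oa:parabolic}, reach $\delta=p^{-1/5}$, which is the case $k=0$.

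\emph{For $k\ge1$, Taylor expansion along slices does not help.} The non-pluriharmonic terms of parabolic order $5,\ldots,2k+4$ have size up to $p\delta^5=p^{2k/(2k+5)}$, which is unbounded. They are main terms, not a Taylor remainder. In general no holomorphic change of variables removes them: the kernel foliation exists only on constant-rank germs of nonuniform size, and it need not be holomorphic. Nor can a bounded jet space absorb them at cost $C_k$.

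\emph{The missing mechanism.} The paper uses the fact that these terms keep rank at most $m$. Each rescaling step (Lemma~\ref{fi:it:lem:renormalize}) identifies the next leading coefficient as a bounded square $Q_{\rm new}^*\Sigma Q_{\rm new}$. Level sets of the previous $Q$ are parametrized by cellular product maps with $Q\circ f=q(b)$ and $\dim b\le m$ (Proposition~\ref{fi:cell:prop:targetproducts}), and only $b$ is gridded, at mesh $t$. That grid costs $t^{-2m}$, but the coefficient drops to $P_{\ell+1}=P_\ell t^2$, so the terminal count $P_k^m$ falls by exactly $t^{2m}$. This cancellation is what produces the exponent $2(n-r)/(2k+5)$, and your plan has nothing corresponding to it.

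\emph{Rank changes and the pointwise density bound.} The Bergman-density route also needs analytic input that this geometry does not supply uniformly. Near rank drops, adapted coordinates involve $A_0^{-1}$. Fekete factorization keeps $Q$ bounded but leaves $C^{-1}$ uncontrolled. The cells $\{|Q-q_*|<P^{-1/2}\}$ are then thin curved polyhedra that need not contain polydiscs of comparable volume. The tube approximations also leave a weight error $E_\ell$ that is bounded only in sup norm.

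Your dyadic bands do not repair this. Treating eigenvalues below $\lambda$ as flat loses $D_0=D_1=0$ in those directions, which in general forces $\delta\lesssim p^{-1/3}$ there. Keeping them as nondegenerate puts powers of $\lambda^{-1}$, up to powers of $p$, into the coordinate changes. In a pointwise density bound those losses enter the exponent directly. Moreover, holomorphic restriction and jet arguments apply to holomorphic functions, not to harmonic $(0,q)$-forms with $q\ge1$ or to their pullbacks to slices.

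\emph{What the paper uses instead.} It works with exact homotopy identities $R-K=\db T+T\db$ on nested polyhedra (Lemma~\ref{fi:lem:composition}). These reduce every Dolbeault degree to holomorphic restrictions between buffered polyhedra; on terminal cells $K_0=RiP_0$ and $K_b=0$ for $b>0$. Such restrictions have approximants of rank $O((1+\log p)^M)$, obtained by Ohsawa--Takegoshi extension and Taylor truncation (Lemma~\ref{fi:lem:restriction}). This holds for arbitrary polyhedral shapes and needs only the scalar bound \eqref{fi:eq:gaugebound}. Polynomial norm losses merely raise the Taylor order to $N\sim\log p$, and Proposition~\ref{fi:prop:localization} converts the number of restrictions into the bound on $h^q$.

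Your final arithmetic $p^r\delta^{-2(n-r)}$ matches the paper's count $t^{-2n}(pt^2)^r$. Without these ingredients, however, it is unjustified for $k\ge1$, and for $k=0$ it still needs adapted coordinates whose uniformity across rank changes you have not supplied.
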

\end{samepage}

Choose $k$ with $2(n-r)/(2k+5)<\varepsilon$. The remaining positive
power absorbs the logarithm, giving \eqref{eq:near-endpoint}.
The ranks $r=0,n$ are handled at the end of
Section~\ref{fi:it:sec:count}.

Our estimate fixes $k$ for each $\varepsilon$. It gives no control of
the constants as $k\to\infty$ and therefore does not establish
the sharp conjecture in the intermediate degrees.

Demailly's weak holomorphic Morse inequalities
\cite[Th\'eor\`eme~0.1(a), p.~189]{Demailly1985} give $O(p^n)$ for
a smooth Hermitian line bundle and a fixed vector bundle on an
arbitrary compact complex manifold. If $r<n$, the leading curvature
integral vanishes and the bound becomes $o(p^n)$.
Proposition~\ref{thm:quantitative} gives an explicit smaller power
that approaches $r$.

Bouche obtains the sharp bound $O(p^r)$ when a smooth foliation of
codimension $r$ is tangent to the curvature kernel
\cite[Corollaire~0.2, p.~503; hypotheses on p.~502]{Bouche1991}.
His result already allows indefinite curvature, non-K\"ahler
manifolds, and arbitrary fixed vector bundles. Constant curvature
rank implies the foliation hypothesis because the curvature form
is closed. The new scope of \eqref{eq:near-endpoint} is variable
rank without this global foliation; Bouche's exponent is sharper
where his hypothesis holds. See also \cite{Bouche1992} and
\cite[Theorems~1.2 and~5.6]{Wang2017}.

For semipositive smooth curvature, Berndtsson's estimate
\cite{Berndtsson2002}, extended to fixed vector bundles by
Wang--Zhou \cite[Theorem~1.3]{WangZhou2019}, gives
\[
 h^q(X,L^p\otimes E)=O(p^{n-q}),\qquad 1\le q\le n.
\]
To obtain this formulation, set the spectral parameter to zero and
replace the auxiliary bundle in the $(n,q)$-form estimate by
$E\otimes K_X^{-1}$. This exponent is at least as strong as ours
when $q\ge n-r$; ours is smaller for $1\le q<n-r$ and sufficiently
small $\varepsilon$. Theorem~\ref{thm:main} also treats indefinite
curvature.

\subsection{Endpoint bounds and section maps}

The geometric argument in Section~\ref{sec:section-growth} gives
the following consequence. Here $X$, $(L,h)$, and $r$ are as in
Theorem~\ref{thm:main}.

\begin{samepage}
\begin{corollary}\label{thm:kodaira}
On every connected component $X_0$ of $X$,
$\kappa(L|_{X_0})\le r$.
\end{corollary}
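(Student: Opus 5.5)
The plan is to show that for every $p\ge1$ with $H^0(X_0,L^p)\ne0$, the meromorphic section map
\[
\Phi_p\colon X_0\dashrightarrow \PP\bigl(H^0(X_0,L^p)^*\bigr)
\]
has generic rank at most $r$. This suffices, since $\kappa(L|_{X_0})$ is the maximum over $p$ of the dimension of the image of $\Phi_p$. On the connected compact manifold $X_0$, the generic rank equals the maximal rank of $d\Phi_p$ over the base-point-free locus, and this equals the dimension of the image closure.

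\textbf{Setup.} Fix $p$ and a basis $s_0,\dots,s_N$ of $H^0(X_0,L^p)$. Away from the base locus $B$, consider the smooth function
\[
u=\log\sum_j|s_j|^2_{h^p}.
\]
It satisfies
\[
i\partial\db u=\Phi_p^*\omega_{FS}-p\,i\Theta_L
\]
on $X_0\setminus B$, with the sign convention $i\Theta_L=-i\partial\db\log h$. The function $u$ is bounded above on the compact $X_0$ and tends to $-\infty$ at $B$. Hence $u$ attains its maximum at some point $x_0\in X_0\setminus B$.

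\textbf{Key step (maximum principle).} At $x_0$ we have $i\partial\db u(x_0)\le0$. Therefore
\[
\Phi_p^*\omega_{FS}\le p\,i\Theta_L \quad\text{at } x_0 .
\]
The left side is semipositive, so $i\Theta_L(x_0)$ is semipositive. Moreover, the kernel of the semipositive form $i\Theta_L(x_0)$ lies in the kernel of $\Phi_p^*\omega_{FS}(x_0)$, which is the kernel of $d\Phi_p(x_0)$. Consequently
\[
\rk d\Phi_p(x_0)\le \rk\Theta_L(x_0)\le r .
\]
This is the place where the abstract's statement about "a semipositive curvature point" enters.

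\textbf{Main obstacle.} The argument so far bounds the rank at one point, $x_0$, not the generic rank. I would try two ways to move the estimate to a point of maximal rank.

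\emph{First attempt: perturb the sum.} Replace $\sum_j|s_j|^2$ by $\sum_j|\lambda_j s_j|^2$, or more generally by $\sum_{j,k} a_{jk}\,\langle s_j,s_k\rangle$ for a positive definite Hermitian matrix $A=(a_{jk})$. The maximum point then moves, and one would hope to place it inside the open dense set $U$ where $\operatorname{rank} d\Phi_p$ is maximal. A reparametrization by $A$ changes $\Phi_p$ only by a projective automorphism, so the rank of $d\Phi_p$ is unchanged. It is not clear, however, that maximum points can be forced into $U$.

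\emph{Second attempt: weight by a local function.} Let $m$ be the maximal rank and suppose $m>r$. Use the weighted function $u+\varepsilon\phi$, where $\phi$ is a strictly plurisubharmonic bump localized near a point of $U$. Its maximum must still satisfy the curvature inequality up to $\varepsilon$. If the maximum stays outside $U$, then $\operatorname{rank} d\Phi_p\le m-1$ there. I expect this to lead to a contradiction via a lower semicontinuity and limiting argument as $\varepsilon\to0$, but this step needs care.

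\emph{Cleaner alternative.} Work on the image instead. Let $Y=\overline{\Phi_p(X_0\setminus B)}$, so $\dim Y=m$. Choose a general linear projection $\pi\colon Y\dashrightarrow\PP^m$ that is generically finite. The composite $\pi\circ\Phi_p$ is given by $m+1$ general sections $t_0,\dots,t_m$, with differential of rank $m$ on a dense open set. Apply the maximum-principle argument to
\[
v=\log\sum_{j=0}^m|t_j|^2_{h^p}
\]
together with the plurisubharmonic pullback of $\log$ of the Jacobian determinant of $\pi\circ\Phi_p$. The Jacobian term is $-\infty$ exactly on the degeneracy locus, so a maximum of a suitable combination must occur where the rank is $m$. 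At that point the Hessian inequality forces $m\le\rk\Theta_L\le r$.

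Designing this combination so that the Hessian still dominates the pulled-back Fubini--Study form is the technical heart of the proof. I would use the section $\det d(\pi\circ\Phi_p)$ of $K_{X_0}^{-1}$-type line bundles restricted to $m$-planes, measured with a Hermitian metric on $X_0$.
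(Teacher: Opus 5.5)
Your maximum-principle computation is correct, but as you note, it only controls $\operatorname{rank} d\Phi_p$ at the maximum point $x_0$ of $u$. That point may lie in the degeneracy locus, and none of your three continuations closes this gap.

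\begin{itemize}
\item The Hermitian-matrix perturbation gives no mechanism for moving the maximum into the maximal-rank set $U$.
\item The bump argument cannot produce a contradiction. The maximum of $u$ itself may already sit at a degenerate point. Maximal rank is an open condition, so limits of maximum points as $\varepsilon\to0$ can land where $d\Phi_p$ has smaller rank. Moreover, on a compact manifold $\phi$ cannot be plurisubharmonic everywhere, so $\varepsilon\, i\partial\bar\partial\phi$ has no sign away from the bump.
\item In the projection/Jacobian version, $\log|\det d(\pi\circ\Phi_p)|^2$ is measured with a metric on $\Lambda^mT^*X_0$ (or $K_{X_0}$). Its Hessian brings in the curvature of that auxiliary bundle, which is unrelated to $\Theta_L$. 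At the new maximum you therefore no longer obtain $\Phi_p^*\omega_{FS}\le C\,\Theta_L$, and giving that term a small weight just reintroduces the same limiting problem.
\end{itemize}

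The missing idea is to build the auxiliary factor as a section of a power of $L$ made from the $s_j$ themselves. Then it shifts the Hessian only by a multiple of $\Theta_L$. The paper proceeds as follows.
\begin{enumerate}
\item Resolve the indeterminacy: $\pi:Y\to X$ with $\pi^*s_j=s_Dv_j$, so that $f=[v_0:\cdots:v_N]$ is holomorphic on $Y$. The critical set $C_f=\{\operatorname{rank}df<d\}$ is then compact.
\item By Remmert, $T=f(C_f)$ is analytic of dimension at most $d-1$ inside $Z=f(Y)$, which has dimension $d$. By Chow, $T$ is algebraic.
\item Choose a homogeneous polynomial $P$ of degree $e$ vanishing on $T$ but not on $Z$. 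Set $\sigma_P=P(s_0,\dots,s_N)\in H^0(X,L^{pe})$; then $\sigma_P\not\equiv0$.
\item Maximize $\Psi=|\sigma_P|^2\sum_j|s_j|^2$. At the maximum $\sigma_P\neq0$, so the point lies off the base locus and off the critical locus.
\item The tuple $(\sigma_Ps_j)_j$ defines the same map $\Phi_W$. Hence the Hessian test there gives $0\le\Phi_W^*\omega_{FS}\le p(e+1)\Theta_L$, with $\operatorname{rank}\Phi_W^*\omega_{FS}=d$.
\end{enumerate}
This gives $d\le\operatorname{rank}\Theta_L\le r$ at a point of maximal mapping rank, and the uncontrolled degree $e$ plays no role.
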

\end{samepage}

The geometric proof shows more: every nonzero section system
attains its maximal mapping rank at a point where $\Theta_L$ is
semipositive and has at least that rank
(Proposition~\ref{geo:semipositive-maximum}). Classical section
growth, a divisor filtration, and a projective-bundle argument then
give the degree-zero estimate with any fixed $E$; Serre duality gives
the top degree. These deductions do not use the analytic estimate.

The endpoint and geometric statements have classical antecedents
in corank one and on Fujiki's class $\mathcal C$; on surfaces they
also imply the all-degree bound by Riemann--Roch.
Appendix~\ref{sec:classical-comparison} explains these cases using
\cite{DemaillyAM,BermanDemailly2012,DiNezzaTrapani2021}.
Section~\ref{sec:section-growth} gives a direct geometric proof on
arbitrary compact complex manifolds, including the semipositive-point
conclusion. The examples illustrate endpoint sharpness and failure
of the foliation hypothesis; their cohomology estimates are already
covered by the classical cases.

\subsection{Local construction and localization}

The analytic proof separates two tasks. First, it constructs local
domains on which a holomorphic frame change bounds the weight of
$L^p$. Second, Proposition~\ref{fi:prop:localization} turns an exact
restriction identity with $B_p$ holomorphic restriction factors into
\[
 h^q(X,L^p\otimes E)\le CB_p(1+\log p)^M.
\]
The restriction identities retain the source variables and integrate
in added parameters; the metric estimates provide the terminal domains.

For the local construction, fix $k$ and set $t=p^{-1/(2k+5)}$.
In each mesh-$t$ box choose a point where the curvature has locally
constant rank $m\le r$. A finite Taylor expansion and parabolic
rescaling give a leading term $\tau^4Q^*\Sigma Q$ with $m$ bounded
holomorphic components. From the cellular parametrization theorem
\cite[Theorem~8]{BN19}, Section~\ref{fi:cell:sec:cells} derives product
maps that retain all passive parameters and satisfy
$Q\circ f=q(b)$ with $\dim b\le m$
(Proposition~\ref{fi:cell:prop:targetproducts}). Gridding only $b$ and applying
Lemma~\ref{fi:it:lem:renormalize} reduces the remaining weight by
$t^2$ at each step. After $k$ steps the error is bounded.
An initial box of center rank $m$ contributes at most $(pt^2)^m$
restriction factors, up to logarithms. The $O(t^{-2n})$ outer boxes
then give \eqref{fi:eq:iteration}.

\subsection{Organization and proof dependencies}

Section~\ref{sec:conventions} fixes the notation and conventions for
curvature, norms, and local domains.
Section~\ref{sec:section-growth} gives the independent endpoint proof.
The analytic estimate occupies
Sections~\ref{fi:sec:localization}--\ref{fi:it:sec:count}:
Section~\ref{fi:sec:localization} proves the localization estimate and
exact homotopy identities;
Sections~\ref{fi:sec:parabolic}--\ref{fi:it:sec:invariant} develop the
local models; Section~\ref{fi:it:sec:relative} constructs restriction
identities for the product maps; and Section~\ref{fi:it:sec:metric}
supplies the terminal metric bounds.
Proposition~\ref{fi:it:prop:assembly} checks the domain, differential,
and norm hypotheses before applying localization to the count in
Section~\ref{fi:it:sec:count}.
Section~\ref{sec:consequences} gives examples and the remaining sharp
question. The appendices retain the section-growth proof and the
comparison with earlier volume theory.

\section{Notation and conventions}\label{sec:conventions}

All dimensions and ranks are complex. The data $X$, $(L,h)$ and
$E$ are as in the introduction, with $n=\dim X$; $p\ge1$ is the
tensor exponent. We use $L^p=L^{\otimes p}$, $V^*$ for the dual
bundle, $L^{-a}=(L^*)^{\otimes a}$, and $L^0=\OO_X$, the trivial
line bundle or structure sheaf. The Dolbeault differential $\db$
acts on bundle-valued $(0,q)$-forms. Its cohomology and dimension
are $H^q(X,V)$ and $h^q(X,V)$; set
$\chi(X,V)=\sum_{q=0}^n(-1)^qh^q(X,V)$ and
$K_X=\Lambda^n(T^{1,0}X)^*$. Norms use fixed Hermitian metrics
and a volume form. Cohomology is the direct sum over the finitely
many connected components.

Let $\nabla^L$ be the Chern connection of $(L,h)$. Our curvature
convention is
\begin{equation}\label{eq:curvature-convention}
 R^L=(\nabla^L)^2,\qquad \Theta_L=iR^L.
\end{equation}
In a holomorphic frame $e$ with $|e|_h^2=e^{-\varphi}$,
\begin{equation}\label{eq:weight-curvature}
 \nabla^Le=-\partial\varphi\otimes e,\qquad
 R^L=\partial\bar\partial\varphi,\qquad
 \Theta_L=i\partial\bar\partial\varphi.
\end{equation}
Write $r=\max_{x\in X}\rk(\Theta_L)_x$.
The first Chern class is $c_1(L)=[\Theta_L/(2\pi)]$, where brackets
denote the real de Rham class of a closed form; compare
\cite[Chapter~V, Theorem~9.5 and (12.7)--(12.8)]{DemaillyCADG}.
For matrices and vectors, $*$ denotes conjugate transpose.
In holomorphic coordinates $z=(z_1,\ldots,z_n)$, with
$A=(A_{jk})\in\C^{n\times n}$ and $v\in\C^n$, we use
antiholomorphic row indices:
\begin{equation}\label{eq:hermitian-conventions}
 \eta=i\sum_{j,k}A_{jk}\,dz_k\wedge d\bar z_j,
 \qquad \eta[v]:=-i\eta(\xi(v),\overline{\xi(v)})=v^*Av,
 \qquad \xi(v)=\sum_kv_k\partial_{z_k}.
\end{equation}
For a real $(1,1)$-form, $A$ is Hermitian. We set $\rk\eta=\rk A$
and use the Hermitian order $\eta\ge0$ when $v^*Av\ge0$ for all $v$.
In particular, the curvature matrix is
\begin{equation}\label{half:eq:matrix-convention}
 A_{jk}=\partial_{\bar z_j}\partial_{z_k}\varphi.
\end{equation}
For a holomorphic coordinate change $z=F(w)$, the Jacobian
$J_F=(\partial F_j/\partial w_k)$ is an $n\times n$ matrix, and
the curvature matrix transforms as
$A_w=J_F^*(A_z\circ F)J_F$. Tangent metric matrices obey the same rule.
The numerical eigenvalues of a curvature matrix refer to the Euclidean
inner product in the specified chart. General coordinate changes preserve
rank and inertia, but not these eigenvalues; the comparisons below use
congruence estimates.

For any holomorphic map $\pi:Y\to X$, tensor powers, duals,
and holomorphic pullbacks satisfy
\begin{equation}\label{eq:curvature-operations}
 \Theta_{L^p}=p\Theta_L,\qquad
 \Theta_{L^*}=-\Theta_L,\qquad
 \Theta_{\pi^*L}=\pi^*\Theta_L.
\end{equation}
Since pullback acts by congruence, it follows that
\begin{equation}\label{eq:pullback-rank}
 \rk(\Theta_{\pi^*L})_y\le\rk(\Theta_L)_{\pi(y)}.
\end{equation}
This applies in particular to restriction to a complex submanifold.

For a line bundle $A$ on a connected compact complex manifold $Y$
and a nonzero linear system $W\subset H^0(Y,A^a)$, $a\ge1$,
put $N=\dim W-1$. Its base locus and section map are
\[
 B_W=\{y\in Y:s(y)=0\text{ for every }s\in W\},\qquad
 \Phi_W:Y\setminus B_W\longrightarrow\PP^N,
 \quad\Phi_W=[s_0:\cdots:s_N],
\]
where $s_0,\ldots,s_N$ is a basis of $W$.
The Kodaira--Iitaka dimension $\kappa(A)$ is the largest generic
rank of the complete section maps; it is $-\infty$ when all positive
powers have no sections. We use the convention that $\PP(V)$
parametrizes lines and $\OO_{\PP(V)}(1)$ is dual tautological.
On $\PP^N$, $\omega_{\rm FS}=i\partial\bar\partial\log(1+|w|^2)$
in affine coordinates. For a divisor $D$, write
$V(D)=V\otimes\OO_Y(D)$, and write $s_D$ for its canonical
section when $D$ is effective. Fujiki's class $\mathcal C$
consists of compact complex manifolds admitting a K\"ahler modification.

In local coordinates, vector norms and parameter balls are Euclidean;
matrix norms are the induced operator norms. A polydisc is defined
by scalar coordinate inequalities. Tube domains likewise use a
finite list $|z_a-f_a(\xi)|<c\eta$, one inequality for each coordinate.
Thus a tube gives the Euclidean bound $|z-f(\xi)|<\sqrt{N}\,c\eta$
in $N$ coordinates. Supremum norms of functions are denoted by
$\|\cdot\|_{L^\infty}$.
The notation $\|\cdot\|_{C^j}$ denotes the supremum norm of all real partial derivatives
through order $j$ on the stated domain. For a multi-index
$\alpha=(\alpha_1,\ldots,\alpha_m)\in\mathbb Z_{\ge0}^m$, set
$|\alpha|=\sum_i\alpha_i$ and $z^\alpha=\prod_i z_i^{\alpha_i}$.
For a real $C^2$ function $u$, its Levi matrix in variables
$z=(z_1,\ldots,z_m)$ is
$\Levi_z u=(\partial_{\bar z_j}\partial_{z_k}u)_{j,k=1}^m$;
omitting the subscript means all complex variables under discussion.
We write $U\Subset V$ when the closure of $U$ is compact in $V$.

The notation $f\lesssim g$ means $f\le Cg$, and $f\asymp g$
means both inequalities. Constants $C,c>0$ may change between
estimates, with dependence as specified in the introduction. Finite
ranges of tensor powers are absorbed into constants.

Nested domains are called \emph{buffered} when the normalized
bounds for each defining holomorphic inequality have a uniform
positive inner-to-outer gap, including every inherited inequality.
The defining list is unchanged between domain levels.

The table records the main notation for the analytic construction in
Sections~\ref{fi:sec:localization}--\ref{fi:it:sec:count}.
Local coordinate names have the dimensions stated at each change
of variables.

\begin{center}
\small
\begin{tabular}{@{}p{0.29\textwidth}p{0.66\textwidth}@{}}
\hline
Symbol & Meaning and scope\\
\hline
$r$; $m$ & global maximal curvature rank; rank at a chosen center,
$0\le m\le r$\\[2pt]
$k$; $\nu$ & fixed rescaling depth; remaining finite-jet depth
(Section~\ref{fi:it:sec:invariant})\\[2pt]
$t$; $\tau$ & actual scale $p^{-1/(2k+5)}$; auxiliary rescaling parameter\\[2pt]
$Q$; $\Sigma$ & local holomorphic column; its diagonal signature matrix\\[2pt]
$q_{\max}$; $J$ & degree through which a restriction identity holds;
$J=q_{\max}+1$ restriction steps between $J+1$ domain levels
(Sections~\ref{fi:sec:localization} and~\ref{fi:it:sec:relative})\\[2pt]
$z_\ell$; $y_\ell$ & current potential variables,
$\dim z_\ell\le n+\ell m$; retained source variables\\[2pt]
$\xi_j=(b_j,V_j,W_j)$ & whole parameter copy in the application of
Section~\ref{fi:it:sec:relative}; $\dim b_j\le m$, and only $b_0$
in the first copy is gridded\\[2pt]
$P_\ell=pt^{4+2\ell}$ & coefficient of the potential at stage $\ell$\\[2pt]
$\mathcal T$; $\cH_\iota$ & composite homotopy in
Lemma~\ref{fi:lem:composition}; buffered holomorphic restriction\\[2pt]
$\db$ & maximal Dolbeault differential in all source variables\\
\hline
\end{tabular}
\end{center}
The depth-dependent degree bounds are $q_{\max,h}$ and
$q_{\max,*}$ in Section~\ref{fi:new:sec:degrees}.
At stage $\ell$ the full source has coordinates $(z_\ell,y_\ell)$;
its dimension $N_h$ at insertion depth $h$ is bounded there in
terms of $n,k$, and may exceed $n+\ell m$.
The potential depends only on $z_\ell$, whereas the defining
inequalities and $\db$ involve both groups. Each new parameter
copy has dimension at most $\dim z_\ell+m$.

In Sections~\ref{fi:sec:parabolic} and~\ref{fi:it:sec:invariant},
$(x,y)$ denotes the nondegenerate and kernel Levi coordinates;
in Section~\ref{fi:it:sec:relative}, $x=(z,y)$ denotes all old
source coordinates. After the affine normalization
$b_0=b_{0,*}+tU_0$, the local variables are $(U_0,V_0,W_0)$.

The precise roles of $t$ and $\tau$, the common interval for norm
bounds, and the additional dependence of local constants are fixed
once in Section~\ref{fi:it:par:scale-conventions}.

\section{Curvature rank and endpoint estimates}
\label{sec:section-growth}\label{sec:auxiliary}

Throughout this section, $X$ is a compact complex manifold of dimension
$n$, $(L,h)$ is a smooth Hermitian holomorphic line bundle,
$r=\max_X\rk\Theta_L$, and $E$ is a fixed holomorphic vector bundle.
All estimates hold for integers $p\ge1$. We prove the endpoint bounds
by a maximum argument and the classical section-growth estimate,
independently of the later analytic construction.

\subsection{Section maps}
\label{app:geometric-rank}\label{sec:scalar}

For a nonzero linear system $W\subset H^0(X,L^p)$, let $B_W$ and
$\Phi_W:X\setminus B_W\to\PP(W^*)$ denote its base locus and
section map, as in Section~\ref{sec:conventions}.

\begin{samepage}
\begin{proposition}[Curvature rank and section maps]
\label{geo:semipositive-maximum}
Let $p\ge1$, $0\ne W\subset H^0(X,L^p)$, and
$d=\max\operatorname{rank}d\Phi_W$. There is $x_*\notin B_W$ with
\[
 \operatorname{rank}d\Phi_W|_{x_*}=d,\qquad
 (\Theta_L)_{x_*}\ge0,\qquad \operatorname{rank}(\Theta_L)_{x_*}\ge d.
\]
\end{proposition}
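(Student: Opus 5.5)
The plan is a maximum-principle argument applied to the pointwise norm of the section system. Fix a basis $s_0,\dots,s_N$ of $W$ and set
\[
 \psi=\log\sum_{j=0}^N|s_j|^2_{h^p}.
\]
This function is smooth on $X\setminus B_W$ and tends to $-\infty$ at $B_W$. In a local holomorphic frame $e$ with $|e|_h^2=e^{-\varphi}$, write $s_j=f_je^{\otimes p}$. Then $\psi=\log\sum|f_j|^2-p\varphi$. By \eqref{eq:weight-curvature} and the standard identity $i\partial\bar\partial\log\sum|f_j|^2=\Phi_W^*\omega_{\rm FS}$ off the base locus, this gives
\begin{equation}\label{prop:plan:ddbar}
 i\partial\bar\partial\psi=\Phi_W^*\omega_{\rm FS}-p\,\Theta_L
 \qquad\text{on }X\setminus B_W .
\end{equation}
Since $X$ is compact and $\psi\to-\infty$ at $B_W$, the function $\psi$ attains its maximum at some $x_0\notin B_W$. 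There $i\partial\bar\partial\psi\le0$, so
\[
 p\,\Theta_L\ge\Phi_W^*\omega_{\rm FS}\ge0 .
\]
Hence $(\Theta_L)_{x_0}\ge0$ and $\rk(\Theta_L)_{x_0}\ge\rk d\Phi_W|_{x_0}$. This already yields semipositivity together with the rank inequality $\rk(\Theta_L)_{x}\ge\rk d\Phi_W|_{x}$ at every maximum point $x$. The only missing point is that the rank of $d\Phi_W$ at the maximum must equal $d$.

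To force the maximum onto the open dense set $\Omega=\{\rk d\Phi_W=d\}\subset X\setminus B_W$, I will perturb $\psi$ by a weight vanishing on the analytic set $Z=(X\setminus B_W)\setminus\Omega$. Locally, consider the $d\times d$ minors $J_I$ of the Jacobian of the map $(f_0,\dots,f_N)$, taken together with the $f_j$ themselves. These minors define a global section-valued quantity of weight $L^{p(d+1)}$ twisted by a fixed power of $K_X$ and its conjugate. Its pointwise norm $g$, measured with $h$ and a fixed Hermitian metric on $X$, is a bounded smooth function vanishing exactly on $Z\cup B_W$. Moreover,
\[
 i\partial\bar\partial\log g\ge-(d+1)p\,\Theta_L-C\omega
\]
where $g>0$, because $\log\sum|J_I|^2$ is psh. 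Set $\psi_\varepsilon=\psi+\varepsilon\log g$. A maximum point $x_\varepsilon$ of $\psi_\varepsilon$ lies in $\Omega$, and there
\[
 (1+(d+1)\varepsilon)\,p\,\Theta_L\ge\Phi_W^*\omega_{\rm FS}-C\varepsilon\,\omega .
\]

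The main obstacle is the error term $C\varepsilon\omega$ coming from the smooth non-holomorphic factor. As $\varepsilon\to0$, the points $x_\varepsilon$ could converge into $Z$, where the rank drops, so a naive limit loses exactly the rank information. I would try two remedies, in this order.

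The first is to remove the error altogether. Choose the normalization of $g$ so that it is $|\det|^2$ of a holomorphic object measured only with $h$ and the Fubini--Study data. Concretely, use the $d$-th exterior power of $\Phi_W^*\omega_{\rm FS}$, multiplied by $(\sum|f_j|^2)^{d+1}$ and paired against the minors in a way that is invariant under changes of coordinates on $X$. Then $i\partial\bar\partial\log g$ is bounded below purely by curvature terms of $L$, and the inequality holds with no $\omega$-term.

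The second is a quantitative fallback if that fails. Show that $\Phi_W^*\omega_{\rm FS}$ has $d$ eigenvalues bounded below by $c\,g^{1/d}$ at $x_\varepsilon$. Then compare the loss $C\varepsilon$ with $g(x_\varepsilon)$, using the maximality inequality $\varepsilon\log g(x_\varepsilon)\ge\psi(y)-\max\psi+\varepsilon\log g(y)$ at a fixed $y\in\Omega$ close to a maximizer of $\psi$. The aim is to obtain $d$ positive and no negative eigenvalues of $\Theta_L$ at $x_\varepsilon$ for small $\varepsilon$. Once either version yields a point $x_*\in\Omega$ with $p\Theta_L\ge\Phi_W^*\omega_{\rm FS}+(\text{nonnegative})$, the three conclusions of Proposition~\ref{geo:semipositive-maximum} follow immediately.
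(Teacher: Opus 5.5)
Your first step is correct and matches the Hessian comparison in the paper: at any maximum of $\psi$ you get $p\,\Theta_L\ge\beta\ge0$, where $\beta=\Phi_W^*\omega_{\rm FS}$. There is a genuine gap in the second step, which is forcing the maximum into the locus where $\rk d\Phi_W=d$; neither remedy closes it.

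Your minors $\omega_I=\sum_k(-1)^kf_{i_k}\,df_{i_0}\wedge\cdots\wedge\widehat{df_{i_k}}\wedge\cdots\wedge df_{i_d}$ are sections of $\Lambda^dT^*X\otimes L^{p(d+1)}$. Any scalar $g$ built from them therefore needs a metric on $\Lambda^dT^*X$, and the curvature of that metric is what produces $-C\varepsilon\omega$.

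\emph{Remedy 1.} It cannot remove this term. By Cauchy--Binet, $\beta^d$ equals, up to a dimensional constant, $\sum_I i^{d^2}\omega_I\wedge\overline{\omega_I}/(\sum_j|f_j|^2)^{d+1}$. So measuring the minors against Fubini--Study data returns a constant and detects nothing. A function vanishing exactly on the critical set needs a reference metric on $X$.

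\emph{Remedy 2.} It cannot give semipositivity. On the $(n-d)$-dimensional kernel of $\beta_{x_\varepsilon}$, your inequality only gives $(1+(d+1)\varepsilon)p\,\Theta_L\ge-C\varepsilon\omega$. This does not exclude small negative eigenvalues of $\Theta_L$ at $x_\varepsilon$, however well the $d$ positive eigenvalues of $\beta$ are controlled.

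\emph{Letting $\varepsilon\to0$.} This recovers semipositivity only at a limit point. That point may lie in the critical set, for instance if every maximizer of $\psi$ is critical, and there the rank is lost. So as written you obtain a point with $\Theta_L\ge0$ and $\rk\Theta_L\ge\rk d\Phi_W$, but not one with $\rk d\Phi_W=d$.

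The missing idea is to perturb by the norm of a holomorphic section of a power of $L$ itself, where the correction is exact. If $0\ne\sigma\in H^0(X,L^{pe})$, then $i\partial\bar\partial\log|\sigma|^2_{h^{pe}}=-pe\,\Theta_L$ off the zeros of $\sigma$. No metric on $X$ enters, and no limit $\varepsilon\to0$ is needed. The paper builds such a $\sigma$ vanishing on the critical set:
\begin{enumerate}
\item It resolves $\Phi_W$ by principalization to a holomorphic map $f:Y\to\PP^N$ on a compact manifold. Compactness matters, because the image under $\Phi_W$ of the critical set need not be analytic.
\item By Remmert's theorems, $f(Y)$ has dimension $d$, while the image $T$ of the critical locus $C_f$ has dimension at most $d-1$.
\item By Chow's theorem both are algebraic, so some homogeneous $P$ of degree $e$ vanishes on $T$ but not on $f(Y)$.
\end{enumerate}
Then $\sigma_P=P(s_0,\ldots,s_N)\in H^0(X,L^{pe})$ is nonzero. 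At a maximum $x_*$ of $|\sigma_P|^2_{h^{pe}}\sum_j|s_j|^2_{h^p}$ one has $x_*\notin B_W$ and $\sigma_P(x_*)\ne0$. Hence its lift avoids $C_f$ and $\rk d\Phi_W|_{x_*}=d$. Your Hessian computation then gives $0\le\beta_{x_*}\le p(e+1)(\Theta_L)_{x_*}$ exactly, which yields all three conclusions.

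This $\sigma_P$ is the ``holomorphic object measured only with $h$'' that your first remedy was looking for. It comes from the image of the critical set, not from Jacobian minors.
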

\end{samepage}
Consequently, on each connected component, $\kappa(L)$ is bounded
by the maximal curvature rank on the semipositive locus; if that
locus is empty, no positive power of $L$ has a nonzero section.

\begin{proof}
Assume first that $X$ is connected. Put $N=\dim W-1$ and choose a basis $s_0,\ldots,s_N$
of $W$, so that $\Phi_W=[s_0:\cdots:s_N]$ in a common local frame.

\smallskip\noindent\emph{Step 1: resolve the section map.}
Analytic principalization of their common
coefficient ideal gives a modification $\pi:Y\to X$, with $Y$
smooth and compact, and an effective divisor $D$ such that
\[
 \pi^*s_j=s_Dv_j,\qquad
 v_j\in H^0(Y,M),\qquad
 M=\pi^*L^p\otimes\mathcal O_Y(-D),
\]
where the $v_j$ have no common zero and $\pi$ is an isomorphism
over $X\setminus B_W$. These assertions follow from
\cite[Theorem~2.0.3 and Lemma~4.0.3]{Wlodarczyk2009}: compactness
makes the sequence of blowups finite, and equality of the transformed
ideal with $(s_D)$ makes the divided coefficients generate the unit
ideal. Thus $f=[v_0:\cdots:v_N]:Y\to\mathbb P^N$ is holomorphic.
On the dense open set $U=\pi^{-1}(X\setminus B_W)$ it equals
$\Phi_W\circ\pi$, so $\max_Y\operatorname{rank}df=d$.

\smallskip\noindent\emph{Step 2: force the maximum away from critical values.}
The analytic critical locus
$C_f=\{y:\operatorname{rank}df_y<d\}$ is compact. By the proper
mapping and image-dimension theorems \cite{Remmert1957},
$Z=f(Y)$ and $T=f(C_f)$ are compact analytic sets, with
$\dim Z=d$ and $\dim T\le d-1$ if $T\ne\varnothing$.
For the latter bound apply the dimension theorem on each irreducible
component of $C_f$: on its regular locus the restricted differential
has rank at most $d-1$. By Chow's theorem \cite{Chow1949}, $T$ is
projective algebraic. If $T\ne\varnothing$, choose a homogeneous
polynomial $P$ of degree $e\ge1$ vanishing on $T$ but not on $Z$.
If $T=\varnothing$, take $P=1$ and $e=0$; this includes $d=0$. Then
\[
 \sigma_P=P(s_0,\ldots,s_N)\in H^0(X,L^{pe}),\qquad
 \pi^*\sigma_P=s_D^eP(v_0,\ldots,v_N).
\]
Writing $v=(v_0,\ldots,v_N)$, the open set where $P(v)\ne0$
meets the dense set $U$, so
$\sigma_P\not\equiv0$. The smooth function
\[
 \Psi=\sum_j|\sigma_P s_j|_{h^{p(e+1)}}^2
      =|\sigma_P|_{h^{pe}}^2\sum_j|s_j|_{h^p}^2
\]
has a positive maximum $x_*$. Here $\sigma_P\ne0$ and some $s_j\ne0$, so
$x_*\notin B_W$ and its lift is outside $C_f$; hence
$\operatorname{rank}d\Phi_W|_{x_*}=d$.
No bound on the degree $e$, which may depend on $p,W$, is needed.

\smallskip\noindent\emph{Step 3: compare the Hessians at the maximum.}
The tuples $(\sigma_P s_j)$ and $(s_j)$ define the same map near $x_*$.
With $\omega_{\rm FS}=i\partial\bar\partial\log(1+|w|^2)$,
the Hessian test at this positive maximum gives
\[
 0\le\beta_{x_*}\le p(e+1)(\Theta_L)_{x_*},\qquad
 \beta=\Phi_W^*\omega_{\rm FS},\quad
 \operatorname{rank}\beta_{x_*}=d.
\]
Thus $\Theta_L$ is semipositive at $x_*$ and its kernel is contained
in the kernel of $\beta_{x_*}$, proving the claimed rank inequality.

For disconnected $X$, choose a component on which the section map
has maximal rank and restrict $W$ to its nonzero image there.
The original map differs from this subsystem map by a linear
projective inclusion, so the ranks agree. Finally apply the bound
to complete linear systems. The same maximum argument for a single
nonzero section proves the assertion when the semipositive locus
is empty.
\end{proof}

\subsection{Kodaira dimension and scalar sections}

\begin{proof}[Proof of Corollary~\ref{thm:kodaira}]
Work on a connected component $X_0$. For every $p\ge1$ for which
$H^0(X_0,L^p|_{X_0})\ne0$,
Proposition~\ref{geo:semipositive-maximum} bounds the maximal rank
of the complete section map by the maximal curvature rank on $X_0$,
and hence by $r$. Taking the maximum over all such $p$ gives
$\kappa(L|_{X_0})\le r$. If no such $p$ exists, then
$\kappa(L|_{X_0})=-\infty$ and the assertion is immediate.
\end{proof}

\begin{samepage}
\begin{corollary}[Scalar sections]\label{ep:scalar}\label{cor:scalar}
The scalar section spaces satisfy
\begin{equation}\label{eq:main-bound}
 h^0(X,L^p)\le Cp^r\qquad(p\ge1).
\end{equation}
\end{corollary}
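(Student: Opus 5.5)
The plan is to combine Corollary~\ref{thm:kodaira} with the classical growth estimate for sections of a line bundle in terms of its Kodaira--Iitaka dimension. Cohomology is the direct sum over the finitely many connected components, so it suffices to prove $h^0(X_0,L^p|_{X_0})\le C_{X_0}p^r$ on each component $X_0$ and then add the constants. Fix such a component. If $\kappa(L|_{X_0})=-\infty$, all positive powers have no sections and there is nothing to prove. Otherwise, set $\kappa=\kappa(L|_{X_0})$. Corollary~\ref{thm:kodaira} (equivalently, Proposition~\ref{geo:semipositive-maximum} applied to complete linear systems) gives $0\le\kappa\le r$.

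The main input is the classical section-growth estimate on a connected compact complex manifold:
\[
 h^0(X_0,L^p|_{X_0})\le C\,p^{\kappa}\qquad(p\ge1),
\]
due to Ueno (via the Siegel--Serre argument). I would quote it and, for completeness, recall its proof. Choose a power $a$ whose complete section map $\Phi=\Phi_{|L^a|}$ has generic rank $\kappa$. Let $Z$ be the closure of its image, of dimension $\kappa$.
\begin{enumerate}
\item A Siegel-type Schwarz-lemma argument shows that the transcendence degree over $\C$ of the graded field of ratios $\{s/s'\colon s,s'\in H^0(X_0,L^{p})\}$ equals $\kappa$.
\item Consequently the section ring $\bigoplus_pH^0(X_0,L^p)$ is an integral domain of transcendence degree $\kappa+1$, which is algebraic over the pullback of the homogeneous coordinate ring of $Z$.
\item For the degree of this algebraic extension, one bounds the number of sections that can restrict independently to a general fiber of $\Phi$ by counting vanishing orders at finitely many points, again via the Schwarz lemma on polydiscs.
\item Combining this degree bound with the Hilbert polynomial of $Z$, of degree $\kappa$, gives the estimate. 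Finitely many exceptional $p$ are absorbed into the constant.
\end{enumerate}
Since $\kappa\le r$ and $p\ge1$, we have $p^{\kappa}\le p^r$, which gives \eqref{eq:main-bound} on $X_0$, and hence on $X$.

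The only substantive step is the classical estimate $h^0=O(p^\kappa)$ in the non-K\"ahler, non-algebraic setting. In particular, one must ensure that it is available for arbitrary compact complex manifolds and not only for projective ones. I would rely on Ueno's Lecture Notes treatment, which works for any connected compact complex manifold. As a fallback, I would run the Siegel argument directly. Cover $X_0$ by finitely many polydiscs, and note that a section of $L^p$ vanishing to order $\gtrsim p$ at a suitable finite set vanishes identically by the Schwarz lemma. This gives the crude bound $O(p^n)$. Refining it along the fibers of $\Phi$, which are transverse to the $\kappa$ directions seen by $\Phi$, reduces the exponent to $\kappa$.
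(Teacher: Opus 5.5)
Your proof is correct and matches the paper's argument: work componentwise, use Corollary~\ref{thm:kodaira} to get $0\le\kappa(L|_{X_0})\le r$ (or note that all spaces vanish when $\kappa=-\infty$), apply the classical bound $h^0=O(p^{\kappa})$, and sum the finitely many constants. The only difference is how the classical bound is justified: the paper proves it in Lemma~\ref{lem:serre-siegel} by showing that, in a frame with $e^a=s_0$, the coefficient of any section of $L^p$ depends only on $\kappa$ independent ratios $f$, and then injecting into $\lfloor cp\rfloor$-jets in those $\kappa$ variables via Lemma~\ref{lem:vanishing}; this is the precise content of your fallback ``refine along the fibers of $\Phi$'', whereas your main route cites Ueno, which is a legitimate citation, but your steps (3)--(4) remain only a sketch.
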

\end{samepage}
\begin{proof}
On each connected component, Lemma~\ref{lem:serre-siegel} gives
$h^0(X,L^p)=O(p^{\kappa(L)})$ when $\kappa(L)\ge0$; otherwise
all these spaces vanish. Apply Corollary~\ref{thm:kodaira} and sum
the finitely many componentwise constants.
\end{proof}

\subsection{Fixed line and vector bundle twists}

\begin{samepage}
\begin{lemma}[A fixed line-bundle twist]
\label{ep:line-twist}
For every fixed holomorphic line bundle $A$ on $X$,
\[
 h^0(X,L^p\otimes A)\le C_Ap^r\qquad(p\ge1).
\]
\end{lemma}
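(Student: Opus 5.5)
Work on each connected component separately and sum at the end, so assume $X$ connected. The idea is to absorb the fixed twist $A$ into a modification of the scalar problem, where Corollary~\ref{thm:kodaira} and the classical section-growth estimate (Lemma~\ref{lem:serre-siegel}) already give $O(p^{\kappa})$ with $\kappa\le r$.

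\textbf{Case $h^0(X,L^{p}\otimes A)=0$ for all $p$.} There is nothing to prove.

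\textbf{Otherwise.} Fix $p_0$ and a nonzero $\sigma\in H^0(X,L^{p_0}\otimes A)$. Multiplication by $\sigma$ gives an injection
\[
H^0(X,L^{p}\otimes A)\hookrightarrow H^0(X,L^{p+p_0}\otimes A^{2}),
\]
which does not remove $A$. A better device is the reverse direction. Choose a fixed line bundle $H$ with a nonzero section $\tau\in H^0(X,H\otimes A^{-1})$. Then multiplication by $\tau$ injects $H^0(L^p\otimes A)$ into $H^0(L^p\otimes H)$, so it suffices to treat twists $H$ that admit sections "dominating" $A$.

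Constructing such an $H$ on a non-projective $X$ is problematic. The cleaner route is the projective-bundle trick adapted to line bundles. Consider the rank-two bundle $V=\OO_X\oplus A$ and $Y=\PP(V^*)$ with $\pi:Y\to X$, so that $\pi_*\OO_Y(1)=V$. Put $\tilde L=\pi^*L$ and $\mathcal O_Y(1)$. Then
\[
H^0(Y,\pi^*L^p\otimes\OO_Y(1))=H^0(X,L^p)\oplus H^0(X,L^p\otimes A).
\]
This still carries a twist, but one can scale: $H^0(Y,(\pi^*L^{a}\otimes\OO_Y(1))^{j})$ contains $H^0(X,L^{aj}\otimes A^{j})$, which reintroduces growing powers of $A$.

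\textbf{Main route.} I would instead use the divisor filtration mentioned in the introduction. Write $A=\OO_X(D_1-D_2)$ up to a line bundle without this form only if needed. Assuming $A^{-1}\otimes G$ has a section for some $G$ effective, namely $G=\OO(D)$ with $D$ an effective divisor, we get the injection $H^0(L^p\otimes A)\hookrightarrow H^0(L^p(D))$. Then filter by multiples of $D$ through the restriction sequences
\[
0\to L^p((j-1)D)\to L^p(jD)\to L^p(jD)|_D\to0 .
\]
This reduces the problem to $h^0(X,L^p)$ plus $h^0(D,(L^p\otimes\OO(D))|_D)$. The latter is estimated by induction on dimension, applied to a resolution of $D$, using \eqref{eq:pullback-rank} to keep the rank $\le r$. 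Since $\dim D=n-1$, the inductive bound is $O(p^{\min(r,n-1)})\le O(p^r)$.

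\textbf{Main obstacle.} The hardest step is producing an effective divisor $D$ with $A\hookrightarrow\OO(D)$ on an arbitrary compact complex manifold, where $A^{-1}$ may have no twisted sections. First I would try the following: if $H^0(L^{p_0}\otimes A)\ne0$ for some $p_0$, take $D$ as the zero divisor of that section. This gives $A=L^{-p_0}(D)$, hence $H^0(L^p\otimes A)=H^0(L^{p-p_0}(D))$, and the filtration argument above applies. The induction requires the bound for pairs (manifold, line bundle) with arbitrary fixed twists in lower dimension. That is consistent, because the singular $D$ is handled by passing to an embedded resolution and pulling back, where the rank inequality persists.
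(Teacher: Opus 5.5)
Your final paragraph has the paper's argument. You take a nonzero $\sigma\in H^0(X,L^{p_0}\otimes A)$ and write $A\cong L^{-p_0}\otimes\OO_X(D)$ with $D$ its zero divisor. You bound the main term $h^0(L^{p-p_0})$ by the scalar estimate of Corollary~\ref{ep:scalar}. The divisor terms go by induction on dimension over all compact manifolds, rank bounds and fixed twists, using \eqref{eq:pullback-rank}. This identity is an isomorphism and needs no auxiliary section, so your earlier detours can be dropped: the bundle $H$ with a section of $H\otimes A^{-1}$, the space $\PP(\OO_X\oplus A)$, and $A=\OO_X(D_1-D_2)$.

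The step that fails as written is your treatment of the restricted term. You filter once by the whole divisor, obtaining $h^0\bigl(D,(L^m\otimes\OO(D))|_D\bigr)$, and then apply the induction ``to a resolution of $D$''. The zero divisor is in general non-reduced; for instance, $\sigma$ may vanish to order two along a hypersurface. For non-reduced $D$, the pullback $H^0(D,\mathcal G|_D)\to H^0(\widetilde D,\nu^*\mathcal G)$ to a resolution of $D_{\rm red}$ is not injective. If $D=2H$, the sequence $0\to\mathcal G(-H)|_H\to\mathcal G|_{2H}\to\mathcal G|_H\to0$ shows that $H^0(2H,\mathcal G|_{2H})$ contains $H^0(H,\mathcal G(-H)|_H)$, and these sections die on $H$. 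Passing to an embedded resolution $\mu:Y\to X$ does not cure this, because $\mu^*D=\sum a_iE_i$ still carries multiplicities.

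The paper's remedy is to filter one smooth component at a time. After resolving, write $\mu^*D=H_1+\cdots+H_s$, allowing repeated components, set $D_j=H_1+\cdots+H_j$, and use
\[
0\to M^m(D_{j-1})\to M^m(D_j)\to M^m(D_j)|_{H_j}\to0 .
\]
Each quotient is then a line bundle on a smooth compact $(n-1)$-fold with the fixed twist $\OO_Y(D_j)|_{H_j}$, so the induction applies directly. The modification $\mu$, the exponent $p_0$, the number $s$ and all twists are fixed before $p$ varies, and the finitely many $p\le p_0$ go into the constant. Filtering by reduced irreducible components and resolving each component separately would also work, since on a reduced space pullback to a resolution is injective on sections.
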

\end{samepage}

\begin{proof}
Induct on $n$, simultaneously for all compact complex manifolds
and all integer rank bounds $r\ge0$. Dimension zero is immediate;
we may assume $X$ connected. If every $H^0(X,L^a\otimes A)$
vanishes there is nothing to prove. Otherwise fix one nonzero
section for one $a\ge1$. Resolve the support of its zero divisor
\cite[Theorem~2.0.3]{Wlodarczyk2009} to obtain a fixed modification
$\mu:Y\to X$ and an effective divisor $D$ with smooth components.
With $M=\mu^*L$,
\[
 \mu^*A=M^{-a}\otimes\mathcal O_Y(D),\qquad
 h^0(X,L^p\otimes A)
 \le h^0(Y,M^{p-a}\otimes\mathcal O_Y(D)).
\]
Pullback is injective on sections and does not increase curvature
rank. Write $D=H_1+\cdots+H_s$, allowing repeated components, and
$D_j=H_1+\cdots+H_j$, with $D_0=0$. For $m=p-a>0$, use
\begin{equation}\label{ep:eq:divisor-sequence}
 0\longrightarrow M^m(D_{j-1})\longrightarrow M^m(D_j)
 \longrightarrow M^m(D_j)|_{H_j}\longrightarrow0,
\end{equation}
where $M^m(D_j)=M^m\otimes\OO_Y(D_j)$. Taking sections and
summing yields
\begin{equation}\label{ep:eq:divisor-filtration}
\begin{split}
 h^0(Y,M^m\otimes\mathcal O_Y(D))
 \le{}&h^0(Y,M^m)+\sum_{j=1}^s h^0\bigl(H_j,(M|_{H_j})^m
                  \otimes\mathcal O_Y(D_j)|_{H_j}\bigr).
\end{split}
\end{equation}
The first term is $O(m^r)$ by Corollary~\ref{ep:scalar}; each
summand is $O(m^r)$ by induction on the smooth $(n-1)$-fold $H_j$,
since $M|_{H_j}$ has curvature rank at most $r$ by
\eqref{eq:pullback-rank}. The section, the modification $\mu$, the
divisor $D$ and its components, the exponent $a$ and all twists are
fixed before $p$ varies; only $m=p-a$ varies, and the number of
summands is fixed. The induction
includes rank bounds $r\ge n-1$, and $D=0$ gives an empty sum.
Absorb the finitely many powers $p\le a$ in the constant.
\end{proof}

\begin{samepage}
\begin{corollary}[A fixed vector-bundle twist]
\label{ep:vector-twist}\label{cor:section-growth}
For the fixed bundle $E$,
\[
 h^0(X,L^p\otimes E)\le C_Ep^r\qquad(p\ge1).
\]
\end{corollary}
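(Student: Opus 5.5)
The plan is to reduce from the vector bundle $E$ to a fixed line bundle by passing to the projectivization. Then Lemma~\ref{ep:line-twist} applies on the total space. Let $\rho:P=\PP(E)\to X$ be the projective bundle of lines in $E^*$; equivalently, choose the convention for which $\rho_*\OO_P(1)=E$. Then $P$ is a compact complex manifold of dimension $n+\operatorname{rank}E-1$. The projection formula gives
\[
 H^0(X,L^p\otimes E)\cong H^0\bigl(P,\rho^*L^p\otimes\OO_P(1)\bigr).
\]
This only uses $\rho_*\OO_P(1)=E$, which is the standard computation of $\rho_*$ for projective bundles.

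Next we control the curvature rank upstairs. Put the pulled-back metric $\rho^*h$ on $\rho^*L$. By \eqref{eq:curvature-operations} and \eqref{eq:pullback-rank}, $\Theta_{\rho^*L}=\rho^*\Theta_L$, so its rank at any point of $P$ is at most $\rk(\Theta_L)$ at the image point, hence at most $r$.

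The argument is then a single application of Lemma~\ref{ep:line-twist}, with $X$ replaced by $P$, $L$ by $\rho^*L$ and $A=\OO_P(1)$. That lemma was proved for all compact complex manifolds, in any dimension, and for any integer rank bound $r$. In particular its rank bound may be smaller than $\dim P-1$, which is exactly our situation. It gives
\[
 h^0(P,\rho^*L^p\otimes\OO_P(1))\le C\,p^r ,
\]
and this is the claim with $C_E=C$.

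The only step needing care is the identification $\rho_*\OO_P(1)=E$ together with the direction convention for $\PP$, given that the paper uses lines with $\OO(1)$ dual tautological. With that convention we take $P=\PP(E^*)$: a fibre point is a line $\ell\subset E_x^*$, and $\OO(1)|_\ell=\ell^*$. A section of $E$ then restricts to a linear form on each line, and this gives $H^0(\PP(E_x^*),\OO(1))=E_x$ fibrewise. It glues to a global isomorphism on sections, since $\rho$ has connected compact fibres and $\rho_*$ of fibrewise-linear sections is exactly $E$. No cohomology of higher degree is involved, so this step is routine; the substantive work was already done in Lemma~\ref{ep:line-twist}.
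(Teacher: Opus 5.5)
Your proposal is correct and is essentially the paper's proof. You pass to $\pi:Y=\PP(E^*)\to X$ with $\OO_Y(1)$ dual tautological, use $\pi_*\OO_Y(1)=E$ to identify $H^0(X,L^p\otimes E)$ with $H^0(Y,(\pi^*L)^p\otimes\OO_Y(1))$, and apply Lemma~\ref{ep:line-twist} on $Y$, of dimension $n+\rk E-1$, using $\rk\pi^*\Theta_L\le r$. The only blemishes are cosmetic: your first line writes $\PP(E)$ before you settle on $\PP(E^*)$, and the trivial case $E=0$, where there is no projectivization, should be set aside.
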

\end{samepage}

\begin{proof}
For $\rk E>0$, let $\pi:Y=\mathbb P(E^*)\to X$ parametrize lines
in $E^*$, and let $\mathcal O_Y(1)$ be the dual tautological bundle.
Fiberwise linear forms give $\pi_*\mathcal O_Y(1)=E$, hence
\[
 H^0(X,L^p\otimes E)
 =H^0\bigl(Y,(\pi^*L)^p\otimes\mathcal O_Y(1)\bigr).
\]
Since $\rk\pi^*\Theta_L\le r$, apply Lemma~\ref{ep:line-twist}
on $Y$, with $A=\mathcal O_Y(1)$. The lemma applies in dimension $n+\rk E-1$ because its induction
was completed in all dimensions before this reduction. The case $E=0$ is immediate.
\end{proof}

\subsection{Top degree, Euler characteristic, and surfaces}
\label{sec:surface-all}

\begin{samepage}
\begin{corollary}[Top degree]\label{cor:top}
The top-degree cohomology satisfies
\[
 h^n(X,L^p\otimes E)=O(p^r).
\]
\end{corollary}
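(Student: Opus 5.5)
The plan is to deduce the top-degree bound from the degree-zero bound of Corollary~\ref{ep:vector-twist} via Serre duality. On each connected component $X_0$, which is a compact complex manifold of dimension $n$, Serre duality for Dolbeault cohomology with values in a holomorphic vector bundle $V$ gives
\[
 H^n(X_0,V)\cong H^0(X_0,V^*\otimes K_{X_0})^*.
\]
This holds on an arbitrary compact complex manifold, with no K\"ahler hypothesis, because the top-degree and degree-zero groups are finite dimensional. Cohomology is the direct sum over components, so the identity holds on $X$ with $K_X$ in place of $K_{X_0}$.

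Apply this with $V=L^p\otimes E$. Then
\[
 V^*\otimes K_X=L^{-p}\otimes E^*\otimes K_X=(L^*)^p\otimes E',
 \qquad E'=E^*\otimes K_X,
\]
where $E'$ is a fixed holomorphic vector bundle independent of $p$. Hence
\[
 h^n(X,L^p\otimes E)=h^0\bigl(X,(L^*)^p\otimes E'\bigr).
\]

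Next, equip $L^*$ with the dual metric $h^{-1}$. By \eqref{eq:curvature-operations}, $\Theta_{L^*}=-\Theta_L$, so $\rk(\Theta_{L^*})_x=\rk(\Theta_L)_x$ at every point and the maximal curvature rank of $L^*$ is again $r$. The hypotheses of Section~\ref{sec:section-growth} impose no sign condition on the curvature; only the maximal rank enters. So Corollary~\ref{ep:vector-twist}, applied to $(L^*,h^{-1})$ with fixed bundle $E'$, yields $h^0(X,(L^*)^p\otimes E')\le C_{E'}p^r$ for all $p\ge1$. This gives $h^n(X,L^p\otimes E)=O(p^r)$.

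There is no real obstacle. The only points to check are that Serre duality is available in this generality (non-K\"ahler, arbitrary $E$) and that the degree-zero argument was genuinely sign-independent. The second point holds: Proposition~\ref{geo:semipositive-maximum} locates the rank at a point where $\Theta_{L^*}$ is semipositive, but bounds it only by the maximal rank of $\Theta_{L^*}$, which equals $r$. If $E=0$, the statement is trivial.
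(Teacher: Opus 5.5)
Your proposal is correct and matches the paper's proof: Serre duality identifies $h^n(X,L^p\otimes E)$ with $h^0\bigl(X,(L^*)^p\otimes E^*\otimes K_X\bigr)$, and Corollary~\ref{cor:section-growth}, applied to $L^*$ with the dual metric (curvature $-\Theta_L$, same rank) and the fixed bundle $E^*\otimes K_X$, gives the bound. One minor point: Serre duality holds without a K\"ahler hypothesis because of Hodge theory for the Dolbeault complex with an arbitrary Hermitian metric, which is the form the paper cites; the finite-dimensionality of the two endpoint groups is not by itself the reason.
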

\end{samepage}
\begin{proof}
Serre duality \cite{Serre1955}, in the form of
\cite{Wel08}, gives
\begin{equation}\label{eq:duality}
 H^n(X,L^p\otimes E)^*
 \simeq H^0\bigl(X,(L^*)^p\otimes E^*\otimes K_X\bigr).
\end{equation}
The dual metric has curvature $-\Theta_L$ and hence the same rank.
Apply Corollary~\ref{cor:section-growth} to $L^*$ with the fixed auxiliary bundle
$E^*\otimes K_X$.
\end{proof}

\begin{samepage}
\begin{proposition}[Euler characteristic polynomial]\label{prop:euler-polynomial}
The Euler characteristic $\chi(X,L^p\otimes E)$ is a polynomial
in $p$ of degree at most $r$.
\end{proposition}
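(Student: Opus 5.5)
We will prove that $\chi(X,L^p\otimes E)$ is a polynomial in $p$ via the Hirzebruch--Riemann--Roch theorem, which holds on arbitrary compact complex manifolds. The degree bound then follows from a vanishing of powers of the curvature class at the level of forms. Concretely, the Atiyah--Singer index theorem applied to the Dolbeault complex (no K\"ahler hypothesis is required) gives
\[
 \chi(X,L^p\otimes E)=\int_X \operatorname{ch}(L^p\otimes E)\operatorname{td}(X)
 =\int_X e^{p\,c_1(L)}\operatorname{ch}(E)\operatorname{td}(X).
\]
Expanding the exponential, the right side is $\sum_{j=0}^n \frac{p^j}{j!}\int_X c_1(L)^j\operatorname{ch}(E)\operatorname{td}(X)$. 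This is a polynomial in $p$ of degree at most $n$ with rational coefficients.

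The key step is to show that the coefficients with $j>r$ vanish. We represent $c_1(L)$ by the closed form $\Theta_L/(2\pi)$, using the convention recorded in Section~\ref{sec:conventions}. We also represent $\operatorname{ch}(E)\operatorname{td}(X)$ by closed Chern--Weil forms built from arbitrary Hermitian metrics on $E$ and $T^{1,0}X$. Since all these forms are closed, each integral $\int_X c_1(L)^j\operatorname{ch}(E)\operatorname{td}(X)$ equals the integral of the corresponding wedge product of forms. At each point $x$, the real $(1,1)$-form $(\Theta_L)_x$ has Hermitian matrix of rank $\rk(\Theta_L)_x\le r$. Diagonalizing it in suitable coframe coordinates, we can write $(\Theta_L)_x=i\sum_{a=1}^{r'}\lambda_a\,dz_a\wedge d\bar z_a$ with $r'\le r$. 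Any wedge power of exponent $j>r'$ then contains a repeated factor $dz_a\wedge dz_a$, so $\Theta_L^{\wedge j}$ vanishes pointwise for $j>r$. Hence every integrand with $j>r$ vanishes identically, and so does its integral. This leaves a polynomial of degree at most $r$.

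The only point that needs care is the justification of Riemann--Roch in the non-K\"ahler setting. We will cite the Atiyah--Singer index theorem for the elliptic Dolbeault complex twisted by $L^p\otimes E$; this is standard for arbitrary compact complex manifolds and is valid componentwise, after which we sum over the finitely many components. We do not expect any obstacle beyond this citation. In particular, the pointwise vanishing of $\Theta_L^{\wedge(r+1)}$ needs no regularity of the rank stratification, because it is a purely algebraic fact about each fiber.
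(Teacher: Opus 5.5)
Your proof is correct and follows the paper's argument. Analytic Hirzebruch--Riemann--Roch, valid on any compact complex manifold by the index theorem, gives $\chi(X,L^p\otimes E)=\sum_j\frac{p^j}{j!}\int_X\alpha^j\wedge\operatorname{ch}(E)\wedge\operatorname{td}(T^{1,0}X)$ with $\alpha=\Theta_L/(2\pi)$, and the pointwise identity $\alpha^{r+1}=0$ coming from the rank bound removes every term with $j>r$.
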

\end{samepage}

\begin{proof}
Set $\alpha=\Theta_L/(2\pi)$, so $\alpha^{r+1}=0$ pointwise. Write $\operatorname{ch}(E)$ and $\operatorname{td}(T^{1,0}X)$
for the Chern character and Todd forms of fixed Hermitian metrics.
Analytic Hirzebruch--Riemann--Roch
\cite[Theorem~(3.4), p.~163]{Atiyah1964} gives
\begin{equation}\label{eq:HRR}
 \chi(X,L^p\otimes E)=
 \sum_{j=0}^r\frac{p^j}{j!}
 \int_X\alpha^j\wedge\operatorname{ch}(E)
                       \wedge\operatorname{td}(T^{1,0}X).
\end{equation}
The integral selects real degree $2n$; the remaining factors are
fixed Chern--Weil forms. Thus
\begin{equation}\label{eq:euler-bound}
 |\chi(X,L^p\otimes E)|=O(p^r).
\end{equation}
\end{proof}

\begin{samepage}
\begin{corollary}[Surfaces]\label{cor:surface-all}
If $n=2$, then
\[
 h^q(X,L^p\otimes E)=O(p^r)\qquad(0\le q\le2).
\]
\end{corollary}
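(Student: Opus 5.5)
For $n=2$ the proof combines the three facts already established: the endpoint bounds in degrees $0$ and $2$, and the polynomial bound on the Euler characteristic. The middle degree is then forced by the alternating sum. Concretely,
\[
 \chi(X,L^p\otimes E)=h^0(X,L^p\otimes E)-h^1(X,L^p\otimes E)+h^2(X,L^p\otimes E),
\]
which rearranges to
\[
 h^1(X,L^p\otimes E)=h^0(X,L^p\otimes E)+h^2(X,L^p\otimes E)-\chi(X,L^p\otimes E).
\]
This identity holds for each $p\ge1$, since all three cohomology groups are finite dimensional on the compact manifold $X$. By the convention in Section~\ref{sec:conventions}, both the cohomology and $\chi$ are direct sums over the connected components, so no connectedness assumption is needed.

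The terms are bounded as follows.

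\begin{itemize}
\item Corollary~\ref{cor:section-growth} gives $h^0(X,L^p\otimes E)\le C_Ep^r$.
\item Corollary~\ref{cor:top}, applied with $n=2$, gives $h^2(X,L^p\otimes E)=O(p^r)$.
\item Proposition~\ref{prop:euler-polynomial}, specifically \eqref{eq:euler-bound}, gives $|\chi(X,L^p\otimes E)|=O(p^r)$.
\end{itemize}

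Feeding these into the identity with the triangle inequality yields $h^1(X,L^p\otimes E)\le h^0+h^2+|\chi|=O(p^r)$. Together with the two endpoint bounds, this proves the claim for $q=0,1,2$.

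I expect no real obstacle here: all the substance lies in the earlier results. The only points to check are the following.

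\begin{itemize}
\item The constants in the three inputs depend only on the fixed data $(X,L,h,E)$, so the combined constant is uniform in $p\ge1$.
\item The case $r=0$ is included. Then every bound says the groups have bounded dimension, and the same argument applies verbatim.
\item No Kähler hypothesis is used anywhere. The inputs rest on the geometric Proposition~\ref{geo:semipositive-maximum}, Serre duality, and analytic Hirzebruch--Riemann--Roch, all of which hold on arbitrary compact complex surfaces.
\end{itemize}
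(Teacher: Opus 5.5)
Your proposal is correct and follows the same route as the paper. It bounds $h^1\le h^0+h^2+|\chi|$ using Corollary~\ref{cor:section-growth}, Corollary~\ref{cor:top}, and the Euler characteristic bound \eqref{eq:euler-bound}.
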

\end{samepage}
\begin{proof}
In dimension two, $\chi=h^0-h^1+h^2$, so
\[
 h^1(X,L^p\otimes E)
 \le h^0(X,L^p\otimes E)+h^2(X,L^p\otimes E)
       +|\chi(X,L^p\otimes E)|=O(p^r)
\]
by the degree-zero and top-degree estimates just proved, and
Proposition~\ref{prop:euler-polynomial}.
\end{proof}

\section{Holomorphic localization}\label{fi:sec:localization}
We derive a cohomology bound from local identities
\[
 R-K=\bar\partial T+T\bar\partial,
\]
where $R$ restricts between nested domains and $K$ factors through
finitely many holomorphic restrictions. After composing the exact
identities and projecting to harmonic forms, we approximate the
designated restrictions by finite-rank maps; the approximants need
not commute with $\bar\partial$.

A bound is \emph{polynomial} if it is at most $Cp^A$ for fixed
$C,A$. Such operator bounds cost logarithmic factors in the final
rank estimate; the restriction count enters linearly.

Let $\Omega$ be a coordinate domain with a holomorphic frame
$e_L$ of $L$, and write $|e_L|_h^2=e^{-\phi}$. For a holomorphic
function $g$ on $\Omega$, replace the frame of $L^p$ by
$e^{pg}e_L^{\otimes p}$. Its squared norm is
$e^{-p(\phi-2\Rea g)}$. Thus the condition
\begin{equation}\label{fi:eq:gaugebound}
 \sup_{\Omega}|p(\phi-2\Rea g)|\le C
\end{equation}
bounds the scalar metric and its inverse on the entire domain
$\Omega$. Only this scalar bound, and no bound on its derivatives,
is needed for the local norm comparisons below.

\subsection{Norms and the localization estimate}
\label{fi:sc:intrinsic}
Put $F_p=L^p\otimes E$, equipped with its actual Hermitian metric,
and use one fixed Hermitian metric and volume form on $X$.
For every open set $U\subset X$, write
\[
 \mathscr L_p^b(U)=L^2(U,\Lambda^{0,b}T^*X\otimes F_p),
 \qquad
 \mathscr D_p^b(U)=\{u\in\mathscr L_p^b(U):
             \bar\partial u\in\mathscr L_p^{b+1}(U)
             \text{ distributionally}\}.
\]
We always use the maximal distributional differential, with no
boundary condition. Intrinsic restriction from $U$ to $V\subset U$
has norm at most one and commutes with this differential. This avoids
any bound on transition functions of $L^p$ in arbitrary frames, whose
coefficients need not be polynomially bounded.

At a terminal domain only, choose holomorphic coordinates and a
holomorphic frame. Denote the intrinsic and Euclidean coefficient
$L^2$ norms by $\|\cdot\|_{\rm actual}$ and $\|\cdot\|_0$,
respectively. Require their polynomial equivalence: for a fixed $M$,
\begin{equation}\label{fi:sc:normequivalence}
 p^{-M}\|u\|_0\le \|u\|_{\rm actual}\le p^M\|u\|_0,
 \qquad b\text{ in the required degree range}.
\end{equation}
This comparison includes coordinate Jacobians, form metrics, volume
density and the fixed $E$-metric; \eqref{fi:eq:gaugebound} controls
the line factor on the whole terminal outer domain.

Local solutions and approximants are expressed intrinsically before
composition. On overlaps, the frame norm bounds control transitions
and their inverses; holomorphic transitions commute with $\bar\partial$.

The holomorphic restrictions below are between buffered polyhedra
in auxiliary coordinate spaces. Their dimensions and numbers of
defining inequalities are bounded independently of $p$; their
normalized buffer ratios are bounded away from one. Defining maps
extend to enlarged coordinate polydiscs with polynomial supremum
bounds. Bergman norms use Lebesgue measure, or coefficient norms
polynomially equivalent to it as in \eqref{fi:sc:normequivalence}.

\begin{samepage}
\begin{proposition}[Holomorphic localization]
\label{fi:prop:localization}\label{fi:new:cor:observation}
Suppose that, on the global maximal Dolbeault domains of $F_p$,
\[
 I-\sum_{\iota=1}^{B_p}B_\iota\cH_\iota A_\iota
       =\db T+T\db.
\]
$T_q:\mathscr L_p^q(X)\to\mathscr D_p^{q-1}(X)$ lowers degree,
$\cH_\iota$ are buffered holomorphic restrictions in bounded
auxiliary dimensions, with bounded defining-list length and coefficient
multiplicity, and polynomial defining-map and norm-comparison bounds. If
$A_\iota,B_\iota$ and $B_p$ have uniform polynomial bounds, then
\begin{equation}\label{fi:eq:localization}
 h^q(X,L^p\otimes E)\le CB_p(1+\log p)^M\qquad(0\le q\le n)
\end{equation}
for fixed $C,M$.
\end{proposition}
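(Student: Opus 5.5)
Since $X$ is compact, the maximal $\db$-complex of $F_p$ on $X$ is Fredholm, so we may identify $H^q(X,F_p)$ with the harmonic space $\mathcal H^q_p=\ker\db\cap\ker\db^*\subset\mathscr L_p^q(X)$. The aim is to show that the identity on $\mathcal H^q_p$ is close in operator norm to an operator of rank at most $CB_p(1+\log p)^M$; a standard linear-algebra lemma then bounds $\dim\mathcal H^q_p$.

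\emph{Step 1 (projection of the homotopy identity).} Let $P$ be the orthogonal projection onto $\mathcal H^q_p$. Apply the identity to $u\in\mathcal H^q_p$. Since $\db u=0$, we get
\[
 u-\sum_\iota B_\iota\cH_\iota A_\iota u=\db Tu,
\]
and $P\db Tu=0$ because harmonic forms are orthogonal to $\operatorname{im}\db$. Hence
\[
 u=\sum_\iota PB_\iota\cH_\iota A_\iota u\qquad\text{on }\mathcal H^q_p.
\]
Note that $T$ drops out entirely, so no bound on $T$ is needed.

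\emph{Step 2 (finite-rank approximation of each restriction).} Each $\cH_\iota$ is a holomorphic restriction between buffered polyhedra in bounded dimension, with polynomially bounded defining maps. By the Cauchy estimates on the buffer, a holomorphic $L^2$ function on the outer polyhedron has Taylor or polyhedral (Weil-type) expansion coefficients that decay geometrically on the inner one, at a rate $\rho<1$ fixed by the buffer ratio. To handle forms of positive degree in the bounded auxiliary variables, I would treat $\cH_\iota$ as acting coefficientwise on the holomorphic restrictions, since these are holomorphic maps between Bergman-type spaces.

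Truncating at degree $D$ then gives a finite-rank operator $F_\iota$ with
\[
 \operatorname{rank}F_\iota\lesssim D^{N},\qquad
 \|\cH_\iota-F_\iota\|\le p^{A}\rho^{D},
\]
where $N$ bounds the auxiliary dimension times the list length. The factor $p^A$ comes from the polynomial defining-map bounds and the norm comparisons \eqref{fi:sc:normequivalence}. I expect this step to be the main obstacle. The work lies in making the Bergman estimates uniform over all buffered polyhedra, with only polynomial loss. To do this, I would embed each polyhedron via its defining map into a unit polydisc of bounded dimension and use the Oka--Weil/Cauchy-integral representation on the polydisc, whose kernel expands in monomials with explicit geometric decay.

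\emph{Step 3 (counting).} Set $K=\sum_\iota PB_\iota F_\iota A_\iota$. Then
\[
 \operatorname{rank}K\le B_pCD^N,\qquad
 \|I-K\|_{\mathcal H^q_p}\le B_p\,p^{2A'}p^A\rho^D,
\]
using the polynomial bounds on $A_\iota$ and $B_\iota$. Choose $D=C'\log(2p)$ with $C'$ large enough that this error is at most $1/2$; this is possible because $B_p$ is polynomially bounded. Then $K$ is injective on $\mathcal H^q_p$, so $\dim\mathcal H^q_p\le\operatorname{rank}K\le CB_p(1+\log p)^N$. Finally take $M=N$ and the maximum over the finitely many $q$.
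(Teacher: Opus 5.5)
Your Steps 1 and 3 match the paper's argument exactly. Projecting the identity onto harmonic forms gives $\Pi_pS_p\Pi_p=\Pi_p$, using only that $T$ maps into the maximal domain. Replacing each $\cH_\iota$ by a finite-rank approximant with truncation order of size $\log p$ then makes $\Pi_pS_{p,N}\Pi_p$ invertible on the harmonic space.

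The gap is in Step~2, the only non-formal input; in the paper it is the separate holomorphic restriction lemma, and your sketch does not supply it.
\begin{itemize}
\item If ``Taylor'' means expanding $f\in A^2(\Omega_A)$ itself in the source coordinates, it fails. The polyhedra are cut out by arbitrary holomorphic inequalities $|G_j|<A$ and may be thin or disconnected. For example, take $f$ equal to different constants on two components; then no power series converges to $f$ on $\Omega_a$.
\item A Cauchy integral on the ambient polydisc presupposes that $f$ has already been extended off the embedded graph $\{w=G(z)\}$.
\end{itemize}
The missing ingredient is a linear extension operator $f\mapsto F$, holomorphic on a polydisc strictly containing $\Delta_a^{m+d}$, with $F(z,G(z))=f(z)$ and norm bounded by a fixed power of $1+B$ times $\|f\|$. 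Once you have it, truncate the Taylor series of $F$ at the origin and evaluate on the graph over $\Omega_a$. This gives rank at most $\binom{N+m+d}{m+d}$ and error $\lesssim(1+B)^{c}\theta^N$.

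The paper builds this extension as follows. The shear $(z,w)\mapsto(z,w-G(z))$ maps $\Delta_A^{m+d}$ onto a bounded pseudoconvex domain of diameter $\lesssim1+B$ whose slice $\{w'=0\}$ is $\Omega_A$. Then $d$ successive applications of the Ohsawa--Takegoshi theorem give $\|F\|\le C(1+B)^d\|f\|$, and taking the minimum-norm extension restores linearity. This handles arbitrary, even disconnected or non-generic, buffered polyhedra, with constants depending only on the diameter.

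Your Oka--Weil idea can also produce the extension, and it is linear for free. Replacing $F_j(z)$ by free variables $w_j$ in the Weil kernel gives a function holomorphic on a product polydisc that agrees with $f$ on the graph. To make this quantitative, you must still do four things you have not done:
\begin{itemize}
\item choose generic intermediate levels $a<a'<A$ so that the polyhedron is in general position;
\item convert the $L^2$ bound on $\Omega_A$ into a sup bound on the intermediate skeleton, using the mean value inequality and Cauchy bounds on $dG$ to keep that skeleton polynomially far from $\partial\Omega_A$;
\item control the skeleton measure, for instance by averaging the formula over levels;
\item use explicit, polynomially bounded Hefer functions on the polydisc.
\end{itemize}
With these details supplied, your route is a valid and more elementary alternative to the $L^2$ extension theorem. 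Without them, Step~2 is an expectation rather than an argument.
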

\end{samepage}
The proof appears in Section~\ref{fi:sc:global}. The intervening
lemmas construct and compose local homotopies;
Section~\ref{fi:it:sec:relative} treats parameter covers.
Proposition~\ref{fi:it:prop:assembly} verifies the hypotheses and
supplies $B_p$ for the geometric construction.

\paragraph{Fixed auxiliary bundles.}
In a holomorphic frame of $E$, scalar $\bar\partial$ solvability
applied to each coefficient gives existence estimates in the
Euclidean coefficient norm. The Gram matrix of the fixed $E$-metric
and its inverse are uniformly bounded on the atlas enlargement;
normalized coordinate changes preserve polynomial norm comparisons.
These comparisons transfer the existence estimates to the actual
bundle-valued Hilbert complex without differentiating the scalar
metric remainder. The minimum-norm solution and orthogonal projections
used below are then defined in that Hilbert complex and may mix the
frame components. Holomorphic restrictions and their Taylor
approximants are direct sums of scalar maps with $\rk E$ components.
Thus $E$ changes constants and restriction multiplicities, but not
the power of $p$.

\subsection{Buffered holomorphic restrictions}
Let $F_1,\ldots,F_s$ be holomorphic on a coordinate polydisc
$\Delta$, including coordinate functions among the $F_j$. For
$a=(a_1,\ldots,a_s),b=(b_1,\ldots,b_s)\in(0,\infty)^s$, put
\[
 \Omega(a)=\{z\in\Delta:|F_j(z)|<a_j\ (1\le j\le s)\}.
\]
The outer coordinate inequalities keep $\Omega(b)$ inside a smaller
polydisc with a fixed margin. All domain pairs below satisfy
\[
 0<a_j<b_j\quad(1\le j\le s),\qquad
 \theta_0:=\max_j\frac{a_j}{b_j}<1,
\]
with the normalized thresholds and ratios fixed independently of $p$.
The same list, including every inherited inequality, is used at both
levels; this is the full buffer required for holomorphic restriction.
These domains and their intersections are pseudoconvex, even when
disconnected or thin.

Write $\Delta_t^j$ for the radius-$t$ polydisc and $A^2(U)$ for
the Bergman space with Lebesgue measure. The following estimate
allows arbitrary pseudoconvex polyhedra, including disconnected
ones.

\begin{samepage}
\begin{lemma}[Holomorphic restriction]\label{fi:lem:restriction}
Fix $0<a<A$, $m\ge1$, $d\ge0$ and $a/A<\theta<1$.
Let $G:\Delta_A^m\to\C^d$ extend holomorphically past the closure,
with $\sup|G|\le B$, and set
\[
 \Omega_t=\{z\in\Delta_t^m:G(z)\in\Delta_t^d\},\qquad t=a,A.
\]
For every integer $N\ge0$, the restriction
$R:A^2(\Omega_A)\to A^2(\Omega_a)$ has a linear approximant $R_N$ satisfying
\begin{equation}\label{fi:eq:restriction}
 \rk R_N\le \binom{N+m+d}{m+d},\qquad
 \|R-R_N\|\le C(1+B)^d\theta^N,
\end{equation}
where $C$ depends only on $a,A,m,d,\theta$.
\end{lemma}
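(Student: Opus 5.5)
The plan is to represent $f\in A^2(\Omega_A)$ on the smaller polyhedron by a Weil-type integral formula for the analytic polyhedron, and then truncate the geometric expansion of its kernel. First I fix intermediate radii $a<a'<A'<A$ with $a/A'<\theta$, and I fix a polydisc $\Delta_{A_1}^m$, $A<A_1$, to which $G$ extends. Applying Cauchy estimates on a slightly smaller polydisc gives Hefer decompositions
\[
 G_k(w)-G_k(z)=\sum_{j=1}^m h_{kj}(w,z)(w_j-z_j),
\]
with $h_{kj}$ holomorphic on $\Delta_{A'}^m\times\Delta_{A'}^m$ and $|h_{kj}|\le C(1+B)$. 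For every $\rho\in[A',A]$, the Weil formula on $\Omega_\rho$ (coordinate inequalities together with $|G_k|<\rho$) expresses $f(z)$, $z\in\Omega_a$, as a sum over the $m$-element subsets of the $m+d$ defining functions. Each term is an integral over a piece of the distinguished boundary. Its integrand is $f(w)$ times a determinant built from the Hefer coefficients, divided by the corresponding products of $(w_j-z_j)$ and $(G_k(w)-G_k(z))$. In each of these terms only the $G_k$ that actually occur in the subset appear, and the Hefer determinant contributes at most $d$ factors of $(1+B)$. This is where the claimed factor $(1+B)^d$ comes from.

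Next I expand each denominator as a geometric series:
\[
 \frac1{w_j-z_j}=\sum_{\alpha}\frac{z_j^\alpha}{w_j^{\alpha+1}},\qquad
 \frac1{G_k(w)-G_k(z)}=\sum_\beta\frac{G_k(z)^\beta}{G_k(w)^{\beta+1}}.
\]
On the boundary pieces $|w_j|=\rho$ or $|G_k(w)|=\rho$, while $|z_j|,|G_k(z)|<a$, so the ratios are at most $a/A'<\theta$. I keep only the terms of total degree at most $N$ in $(z,G(z))$. The resulting operator $R_N$ maps $f$ into the span of $\{z^\alpha G(z)^\beta:|\alpha|+|\beta|\le N\}$, which is the image of polynomials of degree $\le N$ in $m+d$ variables under $z\mapsto(z,G(z))$. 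This gives $\rk R_N\le\binom{N+m+d}{m+d}$. The Hefer factors depend on $z$ as well. I therefore either expand them too, via their Taylor series in $z$ on $\Delta_{A'}^m$ (which adds only a bounded number of further geometric factors of ratio $<\theta$ after shrinking), or, more simply, first embed by the graph map into $\C^{m+d}$. In the graph picture $\Omega_t$ becomes the slice of a submanifold by a polydisc, and I would use the product Cauchy kernel in the ambient coordinates after an $L^2$ holomorphic extension. I will try the Hefer expansion first.

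The main obstacle is converting a boundary integral formula into an $L^2\to L^2$ bound when the domains may be thin or disconnected. Here $f$ has no pointwise control near $\partial\Omega_A$, and the mean-value inequality on small balls would cost $(1+B)^m$ rather than $(1+B)^d$. To handle this, I average the Weil representation over $\rho\in[A',A]$. By the coarea formula along each function $|w_j|$ or $|G_k(w)|$, the averaged boundary integrals become volume integrals over $\Omega_A\setminus\Omega_{A'}$ with respect to $|dw_j|$- or $|dG_k|$-weighted Lebesgue measure. The Jacobian of $G_k$ is again bounded by $C(1+B)$, but it appears only alongside factors already counted among the $d$. The kernel denominators are bounded below by $A'-a$. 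Cauchy--Schwarz then bounds both the tail $R-R_N$, pointwise on $\Omega_a$, by $C(1+B)^d\theta^N\|f\|_{A^2(\Omega_A)}$. Integrating over $\Omega_a\subset\Delta_a^m$, whose volume is bounded, yields \eqref{fi:eq:restriction}. The constants depend only on $a,A,m,d,\theta$ through the fixed radii and the number of subsets. Pseudoconvexity of the polyhedra is never used beyond the validity of the Weil formula, so disconnected polyhedra cause no difficulty.
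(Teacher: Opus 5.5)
Your proposal has a genuine gap at the step you yourself flag as the main obstacle: turning the Weil boundary integrals into an $L^2$ bound.

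\textbf{The single-radius averaging fails for $m\ge2$.} The faces $\sigma_I(\rho)=\{w\in\overline{\Omega_\rho}:|F_i(w)|=\rho,\ i\in I\}$, $|I|=m$, are real $m$-dimensional, but you average over one real parameter $\rho$. Their union over $\rho\in[A',A]$ is therefore only $(m+1)$-dimensional, hence Lebesgue-null in $\C^m$ once $m\ge2$. Already for $d=0$ the faces are the tori $|w_1|=\cdots=|w_m|=\rho$, and their union is not a shell. A coarea formula along a single function $|F_i|$ turns only hypersurface integrals into volume integrals. So nothing you wrote bounds $\int_{\sigma_I(\rho)}|f|$ by $\|f\|_{A^2(\Omega_A)}$, and, as you note, $f$ has no pointwise control there.

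The repair needs independent radii. Apply the Weil formula on $\{|w_j|<\rho_j,\ |G_k|<\rho_{m+k}\}$ for almost every $\vec\rho\in[A',A'']^{m+d}$, with $a<A'<A''<A$; general position for almost every $\vec\rho$ comes from Sard. Then average over the $m$ radii $\rho_I$ that cut out each face, so that coarea for $w\mapsto(|F_i(w)|)_{i\in I}$ gives an honest volume integral over a shell in $\Omega_A$.

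All radii must also stay in a compact subinterval of $(a,A)$, with the Hefer functions built, via Cauchy estimates, on a polydisc strictly inside $\Delta_A^m$. Only $\sup_{\Delta_A^m}|G|\le B$ is available with constants depending on $a,A,m,d,\theta$. As written, your Hefer functions live on $\Delta_{A'}^m$ while your faces reach radius $A$.

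\textbf{The power of $B$ does not come out as stated.} Even after this repair, the coarea step puts the Jacobian factor $|\det(\partial F_I/\partial w)|$ into the volume integrand. For a face involving $s\le\min(m,d)$ of the $G_k$, this factor is of size $(1+B)^s$. It comes in addition to the $(1+B)^s$ from the Hefer minor, which is evaluated at $(w,z)$ with $z\ne w$, so no cancellation is available. Your remark that it "appears only alongside factors already counted" is therefore incorrect. What you actually get is $\|R-R_N\|\le C(1+B)^{2\min(m,d)}\theta^N$. This is polynomial in $B$, which suffices where the paper applies the lemma (with $N$ of order $1+\log p$). It does not prove \eqref{fi:eq:restriction} when $2\min(m,d)>d$.

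\textbf{The paper's route.} The alternative you mention in passing is exactly the paper's proof. The map $T(z,w)=(z,w-G(z))$ sends $\Delta_A^{m+d}$ to a pseudoconvex domain of diameter $C(1+B)$, with Jacobian one, whose slice $\{w'=0\}$ is $\Omega_A$. Then $d$ successive Ohsawa--Takegoshi hyperplane extensions each cost one factor $C(1+B)$ in norm, which gives exactly $(1+B)^d$. The minimum-norm extension makes the construction linear. Finally, Taylor truncation of the ambient extension on $\Delta_a^{m+d}$, evaluated on the graph, gives both the rank bound and the $\theta^N$ error. No boundary analysis of thin or disconnected polyhedra is needed.
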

\end{samepage}
\begin{proof}
The triangular biholomorphism
\[
 T(z,w)=(z,w-G(z))
\]
maps $\Delta_A^{m+d}$ to a bounded pseudoconvex domain $V$ of diameter
at most $C(1+B)$ and has complex Jacobian determinant one. The
coordinate slice $V\cap\{w'=0\}$ is $(\Omega_A,0)$ with its base
measure. Apply the hyperplane $L^2$ extension theorem of
Ohsawa--Takegoshi in the form proved in
\cite[Theorem~A]{Ber96}, successively in the $d$ normal coordinates.
At each step the intermediate slice is pseudoconvex. Dilation of a
domain of bounded diameter shows that the one-step \emph{squared}
norm bound is $CD^2$ for diameter $D$. Thus $f\in A^2(\Omega_A)$
extends to $V$ with norm at most $C(1+B)^d\|f\|$.
Disconnected slices cause no problem: the theorem prescribes the
holomorphic function on the entire hyperplane slice.

Pullback by $T$ gives $F\in A^2(\Delta_A^{m+d})$ satisfying
$F(z,G(z))=f(z)$ with the same norm bound. The subspace of Bergman
functions vanishing on this graph is closed: Bergman convergence
implies local uniform convergence. Orthogonal projection onto its
orthogonal complement therefore selects the minimum-norm extension
linearly, with the preceding bound. This does not assert boundedness
of unrestricted graph evaluation on the whole ambient Bergman space.

Let $T_NF$ be the Taylor polynomial of $F$ at the origin of total
degree at most $N$. Orthogonality
of monomials and Cauchy--Schwarz give
\[
 \sup_{\Delta_a^{m+d}}|F-T_NF|\le C\theta^N\|F\|_{A^2(\Delta_A^{m+d})}.
\]
The polynomial factor in the number of monomials of each degree is
absorbed by $\theta^N$. Evaluate on $(z,G(z))$ and integrate over
$\Omega_a$, whose volume is bounded. The approximating range is
spanned by restrictions of ambient monomials of total degree at most
$N$. This proves both estimates.
\end{proof}

Normalize coordinate scales and list all non-coordinate inequalities
as components of $G$. Finitely many unequal buffer ratios are handled
by using their largest inner-to-outer ratio. Lemma~\ref{fi:lem:restriction}
then applies to every fully buffered polyhedron considered below.
The bound $B$ for the normalized defining maps, and the coordinate,
form-density and bundle-metric comparisons, are polynomial in $p$;
together they change the error bound by a factor $p^A$, while the rank
bound is unchanged. Choosing $N$ of order $1+\log p$ therefore makes
the error at most $p^{-K}$ at a cost of $O((1+\log p)^M)$ dimensions
per restriction, with fixed $M$. 

\subsection{Restriction identities through a prescribed degree}
\label{fi:sc:operator-data}
For two $L^2$ Dolbeault complexes, write $\mathscr L_\pm^b$ for
their Hilbert spaces, $\db_\pm$ for their maximal distributional
differentials and $\mathscr D_\pm^b$ for the respective domains.
Let $R:\mathscr L_+^\bullet\to\mathscr L_-^\bullet$ be a bounded
restriction map commuting with the differentials. Spaces in negative
degrees are zero.

\begin{definition}[Restriction model through degree $q_{\max}$]
\label{fi:def:restrictionmodel}
For an integer $q_{\max}\ge0$, an order-$q_{\max}$ restriction model for $R$ consists of
maps $K_b:\mathscr L_+^b\to\mathscr L_-^b$ for $0\le b\le q_{\max}$
and $T_b:\mathscr L_+^b\to\mathscr D_-^{b-1}$ for
$0\le b\le q_{\max}+1$, with $T_0=0$, such that
\begin{equation}\label{fi:new:eq:modelpackage}
 R-K_b=\db_-T_b+T_{b+1}\db_+
 \quad\hbox{on }\mathscr D_+^b,\qquad 0\le b\le q_{\max}.
\end{equation}
The maps $T_b,\db_-T_b$ are polynomially bounded through degree
$q_{\max}+1$, and $T_{b+1}\db_+$ has a polynomially bounded extension
through degree $q_{\max}$. Each $K_b$ has a finite factorization
\[
 K_b=\sum_\iota B_{b\iota}\cH_\iota A_{b\iota},\qquad
 \cH_\iota:\mathscr H_\iota^+\longrightarrow\mathscr H_\iota^-.
\]
Here $\cH_\iota$ is a fully buffered holomorphic restriction
between Bergman spaces, allowing finite coefficient direct sums;
$A_{b\iota}:\mathscr L_+^b\to\mathscr H_\iota^+$ and
$B_{b\iota}:\mathscr H_\iota^-\to\mathscr L_-^b$ are polynomially
bounded. The auxiliary dimensions are bounded. Each label records
the degree, tuple and layer of that occurrence, so repeated maps
are counted separately.
\end{definition}
The cap $q_{\max}$ applies to the identity, not the Dolbeault
differential. Its top degree uses $T_{q_{\max}+1}$, but no identity
in degree $q_{\max}+1$. Composition applies a child identity to the
differential of an input, so an order-$q_{\max}$ output needs
order-$(q_{\max}+1)$ child models and homotopies through degree
$q_{\max}+2$; see
Section~\ref{fi:new:sec:degrees}.
Equation~\eqref{fi:new:eq:modelpackage} implies that $K$
preserves maximal domains and commutes with $\db$ through degree
$q_{\max}-1$. For $U^-\subset U^+\subset X$, take
$\mathscr L_\pm^b=\mathscr L_p^b(U^\pm)$ and
$\mathscr D_\pm^b=\mathscr D_p^b(U^\pm)$; the parameter-source
spaces are specified in Section~\ref{fi:it:sec:relative}.

\subsection{Domains and cochains}
\label{fi:sc:domains}
For each source and cover member, fix a finite list of holomorphic
inequalities. At every domain level use the same defining functions
and change only their positive bounds.  The number of inequalities on a branch is
bounded independently of $p$.  After normalization, the bounds range
over a fixed interval of positive numbers.

Fix a total order on the finite cover index set.  Let $\chi_i$ be
smooth nonnegative functions on the entire outer source, with
$\sum_i\chi_i=1$, whose relative closed supports satisfy the cover
inequalities at a fixed inner level. This support condition concerns
$V_i$, not the shrinking source $\Omega$; no cutoff is imposed at
the outer source boundary.  The retained nerve
$\mathcal K$ consists of the increasing tuples
$I=(i_0<\cdots<i_a)$ whose relative closed supports have nonempty
intersection.  It is downward closed.  An antisymmetric cochain is
specified on increasing tuples and extended with the sign of a
permutation; it is zero on repeated indices.

Write $\Omega(\lambda)$ for the source and $V_i(\lambda)$ for the $i$th cover
member at domain level $\lambda$. Choose radii $\lambda_{a,j}$ decreasing in
both the \v{C}ech degree $a$ and the source-layer index $j$.
For each retained tuple set
\begin{equation}\label{fi:sc:tupledomain}
 U_{I,j}=\Omega(\lambda_{a,j})\cap\bigcap_{i\in I}V_i(\lambda_{a,j}),
 \qquad |I|=a+1.
\end{equation}
Every inherited source inequality is included in $\Omega(\lambda)$.
If $I$ is a face of $J$, then
\[
 U_{J,j}\subset U_{I,j},\qquad
 U_{I,j+1}\subset U_{I,j}.
\]
These inclusions make the coface and layer restrictions commute.

Define the Hilbert cochain spaces and their maximal domains by
\[
 C_j^{a,b}=\bigoplus_{I\in\mathcal K,\,|I|=a+1}
              \mathscr L_p^b(U_{I,j}),\qquad
 \operatorname{Dom}d_j^{a,b}=
 \bigoplus_{I\in\mathcal K,\,|I|=a+1}
              \mathscr D_p^b(U_{I,j}).
\]
The direct sums use squared intrinsic norms on increasing tuples.
Here $a$ is the \v{C}ech degree, and the vertical degree $b$ is the
Dolbeault degree. Write $C_j^q=\bigoplus_{a+b=q}C_j^{a,b}$ for the total-degree
space and $C_j=\bigoplus_q C_j^q$. On these spaces put
\[
 d_j=(-1)^a\bar\partial,\qquad
 (\delta_j u)_{i_0\cdots i_{a+1}}
 =\sum_{k=0}^{a+1}(-1)^k
      u_{i_0\cdots\widehat{i_k}\cdots i_{a+1}}
         \big|_{U_{(i_0,\ldots,i_{a+1}),j}}.
\]
Repeated restrictions to the same target set agree, so
$\delta_j^2=0$ and $d_j\delta_j+\delta_jd_j=0$ on maximal domains.
The total differential $D_j=d_j+\delta_j$ has domain
$\operatorname{Dom}d_j$, since $\delta_j$ is bounded.

Fix an integer $q_{\max}\ge0$ for the required output degree. Use
\v{C}ech degrees $0\le a\le q_{\max}+1$ and source levels
$0\le j\le q_{\max}+1$. Set $\delta=0$ in the last
\v{C}ech degree; this affects none of the identities through total
degree $q_{\max}$ or the homotopies through degree $q_{\max}+1$. The vertical
differential is the full maximal differential, with forms zero only
outside the natural range of the source dimension. Collation likewise
vanishes above the dimension of its target. At the root $q_{\max}=n$;
the increasing child orders and a quantitative buffer allocation are
given in Section~\ref{fi:new:sec:degrees}.

Different tuples may be expressed in different coordinates.  The
auxiliary base polydisc used for holomorphic approximation must not
change \eqref{fi:sc:tupledomain}.  Choose that polydisc, and its fixed
smaller polydisc, to contain the actual outer tuple domain.  Its
coordinate inequalities are then redundant at the relevant levels.
The fixed enlarged coordinate neighborhoods in the geometric
construction permit this choice.  Thus a change of chart introduces no clipping boundary into a
coface restriction. The eventual holomorphic restriction shrinks
every nonredundant inequality.

If $N_p$ bounds the number of retained tuples in the degrees under
consideration, then, for example,
\[
 \|\delta_j u\|^2\le (q_{\max}+2)N_p\|u\|^2.
\]
This follows by Cauchy--Schwarz in each finite alternating sum and
the norm-one property of intrinsic restriction.  The same reasoning
bounds augmentation by $N_p^{1/2}$.  A polynomial bound for $N_p$ suffices here. The sharper tuple count
in Section~\ref{fi:it:sec:count} determines the final rank.

The layer map $R_j:C_j\to C_{j+1}$ is the direct sum of the
restrictions of forms from $U_{I,j}$ to $U_{I,j+1}$.
It is bounded, preserves the
maximal domains, and satisfies
\[
 d_{j+1}R_j=R_jd_j,\qquad
 \delta_{j+1}R_j=R_j\delta_j.
\]

\subsection{Terminal contractions}
\label{fi:sc:contraction}
On a terminal pseudoconvex coordinate domain $U$, solve the scalar
$\bar\partial$ equation coefficientwise in a holomorphic frame.
Choose $z_0\in U$ and a Euclidean diameter bound $D\ge1$.
The weight $\psi(z)=|z-z_0|^2/D^2$ is bounded and has Levi form $D^{-2}I$.
H\"ormander's $L^2$ theorem \cite{Hor65}, followed by exhaustion,
therefore gives a solution operator estimate in each positive degree
bounded by a fixed multiple of $D$.  The comparison
\eqref{fi:sc:normequivalence} transfers existence and its estimate to
the actual Hilbert complex. Disconnected domains are handled componentwise
with the same estimate, and the squared solution norms sum.

Write $\mathscr L^b(U)$ for this bundle-valued Hilbert space
($\mathscr L_p^b(U)$ on a coordinate domain in $X$). Define the
homotopy using its actual Hermitian inner products. Let $P_b$
project orthogonally onto
$\ker\bar\partial\subset\mathscr L^b(U)$. For $b\ge1$,
solvability identifies this kernel with the range of the preceding
differential. Let $S_b$ be its minimum-norm right inverse in these
inner products. Its norm is bounded by the transferred existence
estimate, since the minimum norm is no larger than the norm of any
solution. Set
$h_b=S_bP_b$, with $h_0=0$. Minimum norm means that the solution
is orthogonal to the preceding kernel. Hence, on the maximal domains,
\[
 \bar\partial h_b=P_b\quad(b\ge1),\qquad
 h_{b+1}\bar\partial=I-P_b\quad(b\ge0).
\]
Set $h=\bigoplus_b h_b$. With $\pi=P_0$ and $i$ the inclusion
of holomorphic zero-forms, these identities become
\[
 \bar\partial h+h\bar\partial=I-i\pi.
\]
Both differential products have bounded extensions to $L^2$, and
$h$ takes values in the maximal domain. No boundary regularity or
trace condition is used.

\subsection{Terminal identities and collation}
\label{fi:sc:terminal}
Let $R$ be the restriction of forms from a terminal polyhedron
$U^+$ to a fully buffered $U^-$. With the contraction of Section~\ref{fi:sc:contraction}, set
\begin{equation}\label{fi:sc:perturbation}
 T_b=Rh_b,\qquad K_0=RiP_0,\qquad K_b=0\quad(b>0).
\end{equation}
Restriction commutes with the maximal differential, so
\begin{equation}\label{fi:sc:terminalmodel}
 R-K=\db T+T\db.
\end{equation}
The maps $T_b$ take all $L^2$ forms into the target maximal domain.
The products $\db T_b=R\db h_b$ and
$T_{b+1}\db=Rh_{b+1}\db$ have bounded extensions by the two
projection identities above. Their norms are polynomial in every
required degree. Put $e=\rk E$. In a terminal frame, $K_0=B\cH A$,
where $A$ takes the coefficient column of $P_0u$,
$\cH:A^2(U^+)^{\oplus e}\to A^2(U^-)^{\oplus e}$ is the direct
sum of holomorphic restrictions, and $B$ restores the frame. The polynomial
norm comparison controls $A$ and $B$. 

For an ordinary cover of $U^+$, let $R$ be the restriction of forms
from $U^+$ to $U^-$ and set $J=q_{\max}+1$. Write
$\mathscr L_\pm^q$ for the $L^2$ Dolbeault spaces on $U^\pm$
with the source metrics. Augmentation and last-layer collation have types
\[
 A_0:\mathscr L_+^q\longrightarrow C_0^q,\qquad
 \cR^J=R_{J-1}\cdots R_0:C_0^q\longrightarrow C_J^q,\qquad
 \mathsf C_J:C_J^q\longrightarrow\mathscr L_-^q.
\]
The image of $A_0$ lies in \v{C}ech degree zero and consists of
restrictions to the cover members. The notation $\mathsf C_J$
distinguishes the collation operator from the cochain space $C_J^q$.
Write $\mathsf C_J=\sum_a\mathsf C_{J,a}$, where
\begin{equation}\label{fi:sc:collation}
 \mathsf C_{J,a}u=\sum_{i_0,\ldots,i_a}
 \chi_{i_0}\db\chi_{i_1}\wedge\cdots
 \wedge\db\chi_{i_a}\wedge u_{i_0\cdots i_a}.
\end{equation}
The sum is over ordered tuples with antisymmetric extension and no
factorial divisor. In $\mathsf C_{J,a+1}\delta_Ju$, deletion of the
zeroth index uses $\sum_i\chi_i=1$; the other terms vanish by
$\sum_i\db\chi_i=0$. The product rule, with $d=(-1)^a\db$, gives
\[
 D_0A_0=A_0\db,\qquad
 \mathsf C_JD_J=\db\mathsf C_J,\qquad
 \mathsf C_J\cR^JA_0=R.
\]
Omitted nerve tuples have zero cutoff product. Each retained product
is supported away from the fine boundaries, so extension by zero
there is legitimate; no extension crosses the outer source boundary.
The distributional product rule proves maximal-domain preservation,
and cutoff and tuple bounds give polynomial norms. These identities
compose the tuple homotopies with source restriction.

\subsection{Composition of restriction homotopies}
\label{fi:sc:layers}
Fix $q_{\max}\ge0$ and $J=q_{\max}+1$. We use cochain complexes
$C_j^{a,b}$, $0\le j\le J$, with \v{C}ech degrees
$0\le a\le q_{\max}+1$, maximal vertical differential $d_j$, and bounded
face differential $\delta_j$. As in Section~\ref{fi:sc:domains},
$\delta_j$ preserves $\operatorname{Dom}d_j$ and
\[
 d_j^2=\delta_j^2=d_j\delta_j+\delta_jd_j=0,
 \qquad D_j=d_j+\delta_j,
 \qquad\operatorname{Dom}D_j=\operatorname{Dom}d_j.
\]
The same setup applies to the parameter cochains of
Section~\ref{fi:it:sec:relative}. Their vertical differential acts
on all source variables.

\begin{samepage}
\begin{lemma}[Composition on nested domains]
\label{fi:lem:composition}
Let $R_j:C_j\to C_{j+1}$ preserve maximal domains and commute
with $d_j$ and $\delta_j$. At each layer $j$, the tuple restrictions
admit order-$(q_{\max}+1)$ models with polynomial bounds uniform over
tuples and labels. Let $M_j$ be the total number of labeled restriction
occurrences over all tuples and required Dolbeault degrees at that
layer. Assume that $R_j$, $\delta_j$ and the tuple counts have uniform
polynomial bounds.
Put $\cR^J=R_{J-1}\cdots R_0$.

There are $S:C_0^q\to C_J^q$ for $q\le q_{\max}$ and
$\mathcal T:C_0^q\to\operatorname{Dom}D_J^{q-1}$ for $q\le q_{\max}+1$ such that
\begin{equation}\label{fi:sc:layeridentity}
 \cR^J=S+D_J\mathcal T+\mathcal T D_0\quad\hbox{on }\operatorname{Dom}D_0^q,
 \qquad q\le q_{\max}.
\end{equation}
The operators $\mathcal T,D_J\mathcal T$ are polynomially bounded through degree
$q_{\max}+1$, and $\mathcal T D_0$ has a polynomially bounded extension through
degree $q_{\max}$. The map $S$ factors through at most $\sum_jM_j$
labeled holomorphic restriction occurrences, with polynomial input
and output norms.
\end{lemma}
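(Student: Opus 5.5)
We build $S$ and $\mathcal T$ by induction on the number of layers, peeling off one restriction $R_j$ at a time. This is the usual homological perturbation of a tensor product of tuplewise homotopies with the \v{C}ech differential.

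\textbf{Step 1: one-layer identity for the total differential.} At a fixed layer $j$, the tuple models give operators $K^{(j)}$ and $T^{(j)}$ acting tuplewise, hence commuting with the index structure, such that
\[
 R_j-K^{(j)}=d_{j+1}T^{(j)}+T^{(j)}d_j
\]
on maximal domains through vertical degree $q_{\max}+1$. The signs $(-1)^a$ in $d_j$ are matched by inserting the same sign into $T^{(j)}$. The face differential $\delta$ does not commute with $T^{(j)}$, because restriction to a coface domain is not intertwined with the tuplewise homotopies. We therefore pass to the total differential $D=d+\delta$ with a perturbation series. Write $R_j-K^{(j)}=D T^{(j)}+T^{(j)}D+E_j$, where
\[
 E_j=-(\delta_{j+1}T^{(j)}+T^{(j)}\delta_j).
\]
Since $\delta$ raises \v{C}ech degree by one and $T^{(j)}$ preserves it, $E_j$ raises \v{C}ech degree by one. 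It is nilpotent of order at most $q_{\max}+2$ on the truncated complex, where $\delta=0$ in the last \v{C}ech degree. We then apply the basic perturbation lemma in the form
\[
 \widetilde T=T^{(j)}\sum_{\ell\ge0}(\delta T^{(j)})^\ell,
\]
which is a finite sum. This produces $\widetilde K^{(j)}$ and $\widetilde T^{(j)}$ with
\[
 R_j-\widetilde K^{(j)}=D_{j+1}\widetilde T^{(j)}+\widetilde T^{(j)}D_j .
\]
Here $\widetilde K^{(j)}$ is a finite sum of words of the form (bounded operators)$\,\cdot K^{(j)}\cdot\,$(bounded operators). Each word contains exactly one factor $K^{(j)}$, so $\widetilde K^{(j)}$ factors through the labeled occurrences counted by $M_j$, with polynomial input and output norms. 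Only finitely many terms appear, with a count depending on $q_{\max}$, and each is a product of polynomially bounded operators; the bounds on $\delta_j$ and on the tuple counts enter here. The degree bookkeeping is where order-$(q_{\max}+1)$ child models are needed. The perturbation terms apply $T^{(j)}$ after $\delta$ and $d$, so an identity in total degree $q$ requires the tuple identity in vertical degree up to $q+1$ in some places, together with homotopies through degree $q_{\max}+2$.

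\textbf{Step 2: telescoping.} Write $\cR^J=R_{J-1}\cdots R_0$ and substitute $R_j=\widetilde K^{(j)}+D\widetilde T^{(j)}+\widetilde T^{(j)}D$ in the standard telescoping form
\[
 \cR^J=\sum_j R_{J-1}\cdots R_{j+1}\widetilde K^{(j)}\,\cR^{j}
 +D_J\mathcal T+\mathcal T D_0,
 \qquad
 \mathcal T=\sum_j R_{J-1}\cdots R_{j+1}\widetilde T^{(j)}R_{j-1}\cdots R_0 .
\]
This uses that each $R_i$ commutes with $D$. We set $S$ equal to the first sum. It factors through at most $\sum_jM_j$ occurrences, with input and output norms given by polynomial products of the $R$'s and the perturbation factors.

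\textbf{Step 3: domains and extensions.} We check that $\mathcal T$ maps into $\operatorname{Dom}D_J$, since the $\widetilde T^{(j)}$ do and the $R_i$ and $\delta$ preserve maximal domains. We also check that $D_J\mathcal T$ is polynomially bounded and that $\mathcal T D_0$ extends boundedly. For the latter, expand $\widetilde T^{(j)}D$: each term is either $T^{(j)}d$, which extends boundedly by hypothesis, or involves the bounded operator $\delta$, which is harmless. The reduction of $d\,T^{(j)}$ terms uses the model identity in the corresponding degree, which is why the extension hypotheses must hold through degree $q_{\max}+1$.

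\textbf{Main obstacle.} The delicate step is Step 1's degree accounting together with the unbounded operators. The expansion must never require a model identity above degree $q_{\max}+1$. Each formal manipulation of $\widetilde T D$ must also be valid on maximal domains, rather than only on smooth forms. We would handle this by expanding explicitly and verifying each term degree by degree.
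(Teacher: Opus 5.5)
\textbf{There is a genuine gap: Step 1 cannot be carried out.} The homotopy $T^{(j)}$ (the paper's $T_j$) is a homotopy for the \emph{restriction} $R_j:C_j\to C_{j+1}$. It takes forms on the tuple domains $U_{I,j}$ to forms on the strictly smaller domains $U_{I,j+1}$, so it is not an endomorphism of one complex. Consequently $\delta_{j+1}T^{(j)}$ also maps $C_j\to C_{j+1}$, and $(\delta T^{(j)})^\ell$ is undefined for $\ell\ge2$. Forming it would require feeding a form on the layer-$(j+1)$ domains back into $T^{(j)}$, which accepts only forms on the layer-$j$ domains, and such forms do not extend back. For the same reason, calling $E_j$ ``nilpotent'' has no meaning. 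The basic perturbation lemma needs a homotopy on a single complex, so it gives you no single-layer identity $R_j-\widetilde K^{(j)}=D_{j+1}\widetilde T^{(j)}+\widetilde T^{(j)}D_j$, and the hypotheses supply none. If such an identity were available, one layer would suffice. The fact that the lemma uses $J=q_{\max}+1$ layers signals that each term of the perturbation series consumes a layer. Separately, your Step 2 formula is off as written: $\sum_jR_{>j}\bigl(\widetilde K^{(j)}+D\widetilde T^{(j)}+\widetilde T^{(j)}D\bigr)\cR^j=J\cR^J$, not $\cR^J$.

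\textbf{How the paper spreads the perturbation across the layers.} Your $E_j$ is, up to sign, the right object. Set $W_j=\delta_{j+1}T_j+T_j\delta_j$ and $F_j=K_j-W_j=R_j-D_{j+1}T_j-T_jD_j$. Then $F_j$ preserves maximal domains and satisfies $D_{j+1}F_j=F_jD_j$ in degrees $\le q_{\max}$. This uses the child identity applied to $D_ju$ in degree $q+1$, together with homotopies through degree $q_{\max}+2$, which is where the order-$(q_{\max}+1)$ models enter. Product telescoping gives $\cR^J-F_{J-1}\cdots F_0=D_J\mathcal T+\mathcal TD_0$ with $\mathcal T=\sum_jR_{>j}T_jF_{<j}$. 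Expanding by the position of the last $K$ factor gives
\[
 F_{J-1}\cdots F_0=(-W_{J-1})\cdots(-W_0)+\sum_{j}(-W_{J-1})\cdots(-W_{j+1})K_jF_{<j}.
\]
The first term vanishes because each $W_j$ has bidegree $(1,-1)$, so $J=q_{\max}+1$ factors annihilate every input of Dolbeault degree $\le q_{\max}$. It is this Dolbeault drop that matches the number of layers; the \v{C}ech rise you invoke would need $q_{\max}+2$ factors. The remaining sum is $S$, with one summand per child label and hence at most $\sum_jM_j$ occurrences. Finally, $\mathcal TD_0=\sum_jR_{>j}(T_jD_j)F_{<j}$ gives the required bounded extension.
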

\end{samepage}
\begin{proof}
Give each tuple homotopy the sign $(-1)^a$ and take direct sums.
The resulting $T_j$ has bidegree $(0,-1)$, and
\[
 R_j-K_j=d_{j+1}T_j+T_jd_j,\qquad
 K_j=\sum_{\iota=1}^{M_j}B_{j\iota}\cH_{j\iota}A_{j\iota}.
\]
Each label includes its tuple and Dolbeault degree; the corresponding
projections and inclusions into the direct sums are absorbed into
$A_{j\iota}$ and $B_{j\iota}$. The model definition supplies the
uniform polynomial operator and differential-product bounds through
the required degrees.

Define
\[
 W_j=\delta_{j+1}T_j+T_j\delta_j,\qquad
 F_j=K_j-W_j=R_j-D_{j+1}T_j-T_jD_j.
\]
All these maps are bounded through total degree $q_{\max}+1$. The products
$dW_j=-\delta_{j+1}dT_j+dT_j\delta_j$ are bounded, so $W_j$
takes $L^2$ into the maximal domain. For
$u\in\operatorname{Dom}D_j$ of degree $q\le q_{\max}+1$, both
$R_ju$ and $T_jD_ju$ lie in the target maximal domain, and
$D_{j+1}T_ju$ is closed. Thus $F_j$ preserves maximal domains;
the term $T_jD_ju$ uses homotopy degree $q+1\le q_{\max}+2$.
For $q\le q_{\max}$, applying the child identity to $D_ju$ gives
\[
 D_{j+1}F_ju=R_jD_ju-D_{j+1}T_jD_ju=F_jD_ju.
\]
This uses the child identity only through degree $q_{\max}+1$ and
requires no commutation of $K_j$ with $\delta_j$.

Set $F_{<j}=F_{j-1}\cdots F_0$ and
$R_{>j}=R_{J-1}\cdots R_{j+1}$, with empty products equal to the
identity. Product telescoping gives
\[
 \cR^J-F_{J-1}\cdots F_0=D_J\mathcal T+\mathcal T D_0,\qquad
 \mathcal T=\sum_{j=0}^{J-1}R_{>j}T_jF_{<j}.
\]
On inputs through degree $q_{\max}+1$, each summand has left differential
$R_{>j}(D_{j+1}T_j)F_{<j}$ and takes values in the maximal domain.
On inputs through degree $q_{\max}$, the chain identity for $F_{<j}$
rewrites its right differential as
$R_{>j}(T_jD_j)F_{<j}$. Both products are bounded in the stated
degrees; no holomorphic restriction factor has been differentiated.

A second telescoping identity is
\[
 F_{J-1}\cdots F_0=(-W_{J-1})\cdots(-W_0)
   +\sum_{j=0}^{J-1}(-W_{J-1})\cdots(-W_{j+1})K_jF_{<j}.
\]
Each $W_j$ lowers Dolbeault degree by one, so the first term vanishes
on Dolbeault degrees at most $q_{\max}$. The remaining sum defines $S$ and
proves \eqref{fi:sc:layeridentity}. Substituting the factorization
of $K_j$ gives
\begin{equation}\label{fi:sc:additivefactorization}
 S=\sum_{j,\iota}
 \bigl[(-W_{J-1})\cdots(-W_{j+1})B_{j\iota}\bigr]
 \cH_{j\iota}\bigl[A_{j\iota}F_{<j}\bigr].
\end{equation}
Each child label contributes one summand, with polynomially bounded
operator factors on its left and right, so the count is at most
$\sum_jM_j$. Replacing $\cH_{j\iota}$ by a rank-$s_{j\iota}$
approximant gives rank at most $\sum_{j,\iota}s_{j\iota}$.
The error is bounded by the sum of the individual errors multiplied
by the norms of those two operator factors. These norms are polynomial
because $J$ is fixed.
\end{proof}

\subsection{Assembly and the rank estimate}
\label{fi:sc:global}
Lemma~\ref{fi:lem:composition}, followed by augmentation and
collation, gives another restriction model at each nonterminal
node. Proposition~\ref{fi:new:prop:analyticinduction} records the
resulting recursion for ordinary and parameter covers.

At the root take a cover of $X$ and put $R_{\rm in}=A_0$ and
$C_{\rm out}=\mathsf C_J$. Since the cutoff sum is one globally,
\[
 C_{\rm out}\cR^JR_{\rm in}=I,\qquad
 D_0R_{\rm in}=R_{\rm in}\db,\qquad
 C_{\rm out}D_J=\db C_{\rm out}.
\]
Equation~\eqref{fi:sc:layeridentity} with $q_{\max}=n$ therefore yields
\[
 I=C_{\rm out}SR_{\rm in}
   +\db(C_{\rm out}\mathcal T R_{\rm in})
   +(C_{\rm out}\mathcal T R_{\rm in})\db.
\]
The homotopy takes values in the global maximal domain. The
factorization \eqref{fi:sc:additivefactorization} expresses its main
term in the form required by Proposition~\ref{fi:prop:localization}.
The following proof uses only that final identity.

\begin{proof}[Proof of Proposition~\ref{fi:prop:localization}]
Write $S_p=\sum_{\iota=1}^{B_p}B_\iota\cH_\iota A_\iota$ and
let $\Pi_p$ be the orthogonal projection onto the $\db$-harmonic
forms for the actual metrics. Hermitian Hodge theory
\cite[Chapters~IV--V]{Wel08} identifies its range in degree $q$
with $H^q(X,F_p)$, and $\|\Pi_p\|=1$. For harmonic input,
$T\db\Pi_p=0$, while $T\Pi_p$ lies in the maximal domain.
Thus $\db T\Pi_p$ is exact and orthogonal to harmonic forms. The
assumed identity gives
\[
 \Pi_pS_p\Pi_p=\Pi_p.
\]

By Lemma~\ref{fi:lem:restriction} and the norm comparisons, each
$\cH_\iota$ has an approximant of rank at most $C(1+N)^M$ and
error at most $Cp^A\theta^N$, where $0<\theta<1$ and $C,M,A$
are independent of the label and $p$. The input and output norms
and $B_p$ are polynomially bounded, so replacing the designated
restrictions gives an operator $S_{p,N}$ with
\[
 \operatorname{rank}S_{p,N}\le CB_p(1+N)^M,\qquad
 \|S_p-S_{p,N}\|\le Cp^{A'}\theta^N.
\]
Choose $N=\lceil C_{\rm Tay}(1+\log p)\rceil$ with a sufficiently
large fixed $C_{\rm Tay}$. The error is less than $1/2$.
On the harmonic space, $\Pi_pS_{p,N}\Pi_p$ then differs from the
identity by norm less than $1/2$ and is injective. Its rank bounds
the dimension of that space, proving \eqref{fi:eq:localization}.
\end{proof}

\section{A bounded parabolic model}\label{fi:sec:parabolic}

At a point of locally constant curvature rank $m$, parabolic scaling
and a holomorphic gauge give a leading weight $t^4Q^*\Sigma Q$, with $m$
components in $Q$. We obtain a bounded $Q$ by interpolating the kernel:
the direct factorization can contain an arbitrarily large inverse
curvature matrix. Here $t$ is a free scaling variable; in
Section~\ref{fi:it:sec:invariant} its role is taken by the auxiliary
parameter $\tau$.

For a smooth real function $\phi$, let $T_s\phi$ be its ordinary
Taylor polynomial of total real degree $s$ at the chosen center.
The notation $D^j\phi$ denotes all real derivatives of total order $j$.
A signature matrix is a diagonal matrix with entries in $\{1,-1\}$.
We use the convention
\[
 H_{ij}(z)=\frac{\partial^2\phi}
 {\partial\bar z_i\partial z_j}(z).
\]
Thus for a holomorphic map $Q:\Omega\subset\C^n\to\C^s$,
with Jacobian $DQ$ of size $s\times n$, and a constant Hermitian
matrix $\Sigma$ of size $s\times s$,
\[
 H_{Q^*\Sigma Q}=(DQ)^*\Sigma(DQ).
\]
All boxes below have fixed normalized size. The constant conventions
of Section~\ref{sec:conventions} apply; here the bounds depend only on
these boxes, the dimension, and the stated upper derivative or
coefficient bounds.

\subsection{Bounded kernel factorization}\label{fi:new:sec:scalar}

A Hermitian polarization of a real function $f$ on a product $D$ is
a kernel $K(z,\bar w)$, holomorphic in $z$ and antiholomorphic in $w$,
such that $K(z,\bar w)=\overline{K(w,\bar z)}$ and $K(z,\bar z)=f(z)$.
The next lemma factors a finite-rank Hermitian kernel with bounds
controlled by its supremum norm.

\begin{samepage}
\begin{lemma}[Bounded factorization]\label{fi:new:lem:fekete}
Let $D_{\rm ext}$ be a polydisc and let $K\Subset D_{\rm ext}$ be
compact with nonempty interior. Let $\mathcal M$ be a Hermitian
holomorphic-antiholomorphic kernel on $D_{\rm ext}\times D_{\rm ext}$,
with section space
\[
 \mathcal V=\operatorname{span}\{\mathcal M(\,\cdot\,,\bar w):w\in D_{\rm ext}\}.
\]
If $d=\dim\mathcal V\le m$ and $\sup_{K\times K}|\mathcal M|\le B$, there
are a holomorphic map $Z:D_{\rm ext}\to\C^d$ and a diagonal
signature matrix $\Sigma$ of size $d\times d$ such that
\begin{equation}\label{fi:new:eq:fekete}
 \mathcal M(z,\bar w)=Z(w)^*\Sigma Z(z),\qquad
 \sup_K|Z|\le C_m\sqrt B.
\end{equation}
Each component of $Z$ belongs to $\mathcal V$.
\end{lemma}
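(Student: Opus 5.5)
We would prove this by a Fekete-type choice of evaluation points, i.e.\ by maximizing a determinant. Since $\dim\mathcal V=d$ and the span of $\{\mathcal M(\cdot,\bar w):w\in D_{\rm ext}\}$ is $\mathcal V$, the span over $w\in K$ already equals $\mathcal V$. This follows from the identity theorem: a linear functional on $\mathcal V$ that kills all $\mathcal M(\cdot,\bar w)$ with $w\in K$ vanishes on the antiholomorphic function $w\mapsto\ell(\mathcal M(\cdot,\bar w))$ on the interior of $K$, hence on the connected polydisc $D_{\rm ext}$. Consequently the $d\times d$ Gram-type matrix
\[
 G(w_1,\dots,w_d)=\bigl(\mathcal M(w_i,\bar w_j)\bigr)_{i,j}
\]
is not identically singular on $K^d$. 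Indeed, if $\mathcal M(\cdot,\bar w_j)$ form a basis, then invertibility of $G$ is equivalent to the evaluations at $w_i$ separating that basis, and this can be arranged by the same argument. By compactness we choose $(w_1,\dots,w_d)\in K^d$ maximizing $|\det G|$; the maximum $\Delta$ is positive.

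Put $e_j=\mathcal M(\cdot,\bar w_j)\in\mathcal V$ and $G_0=G(w_1,\dots,w_d)$, a Hermitian invertible matrix. For $v,u\in\mathcal V$ the kernel is reproduced on this basis:
\[
 \mathcal M(z,\bar w)=\sum_{i,j}\mathcal M(z,\bar w_j)(G_0^{-1})_{ji}\mathcal M(w_i,\bar w).
\]
To verify this, note that both sides lie in $\mathcal V\otimes\overline{\mathcal V}$, a space of dimension $d^2$, and that they agree when $z,w$ range over $\{w_i\}$. Evaluation at the $w_i$ is injective on $\mathcal V$ because $G_0$ is invertible, and the same holds in the antiholomorphic variable. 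Define the Lagrange-type functions
\[
 \ell(z)=G_0^{-1}\bigl(\mathcal M(z,\bar w_1),\dots,\mathcal M(z,\bar w_d)\bigr)^T.
\]
By Cramer's rule, $\ell_i(z)$ is the ratio $\det G(w_1,\dots,z,\dots,w_d)/\Delta$, with $z$ substituted in the $i$-th row. This ratio is not directly of the Fekete form, because the column entry also changes. We therefore instead use the symmetric substitution ratio $\det G(\dots,z,\dots)$ with $z$ replacing $w_i$ in both row and column, together with Hadamard-type bounds. This is the delicate point.

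A cleaner route, which we would adopt, is to diagonalize. The reproducing formula shows $\mathcal M(z,\bar w)=E(w)^*G_0^{-1}E(z)$, where $E(z)=(\mathcal M(z,\bar w_j))_j$ with the Hermitian symmetry fixing the conjugations. Write $G_0^{-1}=U^*\Lambda^{-1}U$ with $U$ unitary, and set $Z=|\Lambda|^{-1/2}UE$ and $\Sigma=\operatorname{sign}\Lambda$. This gives \eqref{fi:new:eq:fekete}, and every component of $Z$ lies in $\mathcal V$. The supremum bound requires $|\Lambda|^{-1/2}$ to be controlled, i.e.\ no small eigenvalues of $G_0$ relative to $\sqrt B$. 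Naively this fails, and it is the main obstacle.

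To overcome it we would use the diagonal: $\mathcal M(z,\bar z)=Z(z)^*\Sigma Z(z)$ only bounds an indefinite form, so it gives nothing directly. Instead we would choose the points greedily, as a pivoted $LDL^*$ or Gaussian elimination with complete pivoting on $K$. At step $k$, take the residual kernel $\mathcal M_k=\mathcal M-\sum_{i<k}$ (rank-one terms), choose $w_k\in K$ maximizing $|\mathcal M_k(w,\bar w)|$, or rather $\sup_{K\times K}|\mathcal M_k|$ over pairs. If the maximum is attained off the diagonal, we use a $2\times2$ pivot. Each extracted term $\pm|g|^2$ then has $g=\mathcal M_k(\cdot,\bar w_k)/\sqrt{|\text{pivot}|}$, and complete pivoting gives $\sup_K|g|\le\sqrt{\sup|\mathcal M_k|}$. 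The residual sup grows by at most a factor $3$ per step, and off-diagonal $2\times2$ pivots are handled by the polarization identity $\mathcal M(z,\bar w_a)+\mathcal M(z,\bar w_b)$ to produce diagonal signs. After $d\le m$ steps the residual is a kernel whose section space has dimension zero, hence vanishes. Thus $\sup_K|Z|\le C_m\sqrt B$ with $C_m$ depending only on $m$. Residual kernels remain Hermitian and their section spaces stay inside $\mathcal V$, dropping dimension by at least one per step; checking this dimension drop for the $2\times2$ pivots is the step we expect to need the most care.
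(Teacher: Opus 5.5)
Your final route, greedy symmetric elimination on $K\times K$, differs from the paper's and can be made to work. However, the off-diagonal step, which carries the whole estimate, is not controlled as you state it.

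Suppose $\mu=\sup_{K\times K}|\mathcal M_k|$ is attained at $(a,b)$ with $a\ne b$. A true $2\times2$ Schur pivot on $\{a,b\}$ can be nearly singular. For instance, take $\mathcal M_k(a,\bar a)=\mathcal M_k(b,\bar b)=(1-\delta)\mu$ and $\mathcal M_k(b,\bar a)=\mu$. The block then has eigenvalues $(2-\delta)\mu$ and $-\delta\mu$, and the corresponding factor carries $(\delta\mu)^{-1/2}$. Neither $\sup_K|g|\lesssim\sqrt\mu$ nor the growth factor $3$ follows. The unphased sum $\mathcal M_k(\cdot,\bar a)+\mathcal M_k(\cdot,\bar b)$ can even have pivot value $\mathcal M_k(a,\bar a)+\mathcal M_k(b,\bar b)+2\Rea\mathcal M_k(b,\bar a)=0$, e.g.\ with zero diagonal and $\mathcal M_k(b,\bar a)=i\mu$.

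Your polarization idea does work once a phase and a sign are chosen. Put $q=\mathcal M_k(b,\bar a)$, let $\epsilon\in\{\pm1\}$ be the sign of $\mathcal M_k(a,\bar a)+\mathcal M_k(b,\bar b)$ ($\epsilon=1$ if it vanishes), set $c=\epsilon q/\mu$, and
\[
 h=\mathcal M_k(\cdot,\bar a)+c\,\mathcal M_k(\cdot,\bar b),\qquad
 \lambda(f)=f(a)+\bar c\,f(b).
\]
Then $\overline{h(w)}=\lambda(\mathcal M_k(\cdot,\bar w))$. The pivot $\gamma=\lambda(h)=\mathcal M_k(a,\bar a)+\mathcal M_k(b,\bar b)+2\epsilon\mu$ is real with $|\gamma|\ge2\mu$, and $\sup_K|h|\le2\mu$. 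Hence $g=h/|\gamma|^{1/2}$ satisfies $\sup_K|g|\le\sqrt{2\mu}$, and $\mathcal M_{k+1}=\mathcal M_k-h(z)\overline{h(w)}/\gamma$ is bounded by $3\mu$ on $K\times K$.

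The dimension check you flagged is immediate: $\lambda$ annihilates every section of $\mathcal M_{k+1}$ but not $h\in\mathcal V_k$. Use a diagonal pivot when the maximum lies on the diagonal, and your identity-theorem argument gives $\mu>0$ while $\mathcal V_k\ne0$. Then exactly $d$ steps give the lemma with $C_m=3^{m/2}$. At least $d$ terms are needed because $\mathcal V$ lies in their span.

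The paper uses no elimination, and its proof is the route you abandoned. It chooses nodes maximizing $|\det(\phi_j(z_i))|$ for a basis $\phi$ of $\mathcal V$, so the Lagrange functions satisfy $|\ell_j|\le1$ on $K$. Interpolation gives $\mathcal M(z,\bar w)=\ell(w)^*A\ell(z)$ with $A_{ij}=\mathcal M(z_j,\bar z_i)$ and $\|A\|\le dB$. It then factors $A$ itself, $A=U^*\Lambda U$ and $Z=|\Lambda|^{1/2}U\ell$. Only $|\Lambda|^{1/2}\le\sqrt{dB}$ enters, so $\sup_K|Z|\le d\sqrt B$.

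Both of your obstacles on that first route were illusory:
\begin{itemize}
\item Writing $\mathcal M(z,\bar w)=\phi(w)^*H\phi(z)$ with $H$ invertible gives $|\det G(w_1,\dots,w_d)|=|\det H|\,|\det\Phi|^2$, where $\Phi_{il}=\phi_l(w_i)$. So your Gram maximizers are exactly these Fekete nodes.
\item If $G_{\rm row}$ replaces the $i$th row of $G_0$ by $(\mathcal M(z,\bar w_j))_j$, then $|\det G_{\rm row}|^2=|\det G(w_1,\dots,z,\dots,w_d)|\,|\det G_0|$. Hence your Cramer ratio has modulus at most one on $K$.
\item The small eigenvalues arose only because you factored $G_0^{-1}$ against the kernel sections. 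In the Lagrange basis the same reproducing formula is $\ell(w)^*A\ell(z)$ with the bounded node matrix, and factoring that matrix is the paper's argument.
\end{itemize}
The Fekete route is shorter and gives the constant $d$. The corrected elimination avoids determinants, but it needs the phased pivot and yields an exponential constant.
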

\end{samepage}
\begin{proof}
The assertion is immediate for $d=0$. Otherwise choose a basis of the
section space and nodes $z_1,\ldots,z_d\in K$ maximizing its absolute
evaluation determinant. The maximum is positive by the identity theorem.
Let $\ell=(\ell_1,\ldots,\ell_d)^T$ be the Lagrange basis, characterized
by $\ell_j(z_i)=\delta_{ij}$. Replacing one node in the determinant
gives $|\ell_j|\le1$ on $K$. Define the $d\times d$ Hermitian matrix
$A$ by $A_{ij}=\mathcal M(z_j,\bar z_i)$. Interpolation in both variables gives
\[
 \mathcal M(z,\bar w)=\ell(w)^*A\ell(z),\qquad
 \|A\|\le dB.
\]
The matrix $A$ has rank $d$. Choose a unitary $d\times d$ matrix
$U$ and a real diagonal matrix $\Lambda$ with $A=U^*\Lambda U$.
Put $\Sigma=\operatorname{sgn}\Lambda$ and
$Z=|\Lambda|^{1/2}U\ell$. Then
\[
 \mathcal M(z,\bar w)=Z(w)^*\Sigma Z(z),\qquad
 \sup_K|Z|\le\sqrt{dB}\sqrt d=d\sqrt B.
\]
Since $\ell_j\in\mathcal V$, every component of $Z$ is a finite constant
linear combination of kernel sections and is holomorphic on
$D_{\rm ext}$. The determinant is used only to choose the nodes; its size does not
enter the bound.
\end{proof}

\subsection{The parabolic model}

For a Hermitian polarization $A(z,\bar\zeta)$ and a base point
$z_*$, define its mixed part by
\begin{equation}\label{fi:it:eq:mixed-projection}
 \begin{split}
 \mathcal P_{z_*}A(z,\bar\zeta)
  ={}&A(z,\bar\zeta)-A(z,\bar z_*)\\
     &-A(z_*,\bar\zeta)+A(z_*,\bar z_*).
 \end{split}
\end{equation}
The removed diagonal part is
$2\Rea[A(z,\bar z_*)-A(z_*,\bar z_*)/2]$.
These evaluations preserve uniform bounds and, for polynomial $A$,
do not increase its degree.
If the diagonal of $A$ is pluriharmonic, $\mathcal P_{z_*}A=0$.

\begin{samepage}
\begin{lemma}[Dense regular points]\label{fi:oa:regular}
For a continuous matrix field of finite size, the points at which its
rank is locally constant form a dense open set.
\end{lemma}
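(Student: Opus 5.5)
The proof uses lower semicontinuity of rank together with a maximality argument. Let $M$ be a continuous matrix field on a topological space $\Omega$, the relevant case being an open subset of $\C^n$ or $X$. Write $\rho(x)=\rk M(x)$.

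\emph{Step 1: lower semicontinuity.} If $\rho(x_0)=k$, some $k\times k$ minor of $M(x_0)$ is nonzero. By continuity of that minor, it stays nonzero on a neighborhood of $x_0$. Hence $\rho\ge k$ near $x_0$, and every set $\{\rho\ge k\}$ is open.

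\emph{Step 2: openness.} Let $G$ be the set of points near which $\rho$ is constant. It is open by definition.

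\emph{Step 3: density.} Take any nonempty open set $U$. The rank takes only finitely many values, bounded by the matrix size. So $\rho|_U$ attains a maximum value $k$ at some point $x_1\in U$. By Step 1 there is a neighborhood $V\subset U$ of $x_1$ on which $\rho\ge k$. Maximality of $k$ on $U$ gives $\rho\le k$ on $V$, so $\rho\equiv k$ on $V$. Hence $x_1\in G\cap U$, and $G$ meets every nonempty open set.

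There is no real obstacle. The only points to check are that the matrix size is fixed, so the rank is bounded and the maximum is attained, and that continuity suffices for the minor argument in Step 1. Local constancy does not mean global constancy: different components of $G$ may carry different ranks, and the statement requires nothing more.
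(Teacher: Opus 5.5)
Your proof is correct and is essentially the paper's argument: choose a point of maximal rank in the given open set, use the persistence of a nonzero maximal minor (lower semicontinuity of rank) to get constant rank nearby, and note that openness holds by definition. The only difference is that the paper treats maximal rank zero as a separate case, whereas your Step~1 covers it automatically because $\rho\ge0$ holds trivially everywhere.
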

\end{samepage}
\begin{proof}
In any nonempty open set choose a point of maximal rank. A nonzero
maximal minor stays nonzero nearby, so the rank is constant there;
if the maximal rank is zero, it is constant on the original set.
Thus the regular set is dense. It is open by definition.
\end{proof}

Fix a bounded set of $\alpha\in\C^m$ and a normalized product box
in variables $(u,v)\in\C^m\times\C^{n-m}$.

\begin{samepage}
\begin{lemma}[Parabolic model]
\label{fi:oa:parabolic}
Let $\phi$ be real and $C^5$ on a fixed neighborhood of $0$, with
locally constant Levi rank $m>0$ at $0$. In unitary coordinates
$z=(x,y)$, $x\in\C^m$, with $H_\phi(0)=\diag(A_0,0)$ and $A_0$
invertible, there are holomorphic polynomials $g_{t,\alpha}$ and
$Q_\alpha(u,v)=C_\alpha u+G_\alpha(v)\in\C^m$ such that
\begin{equation}\label{fi:oa:parabolic-error}
 \phi(t\alpha+t^2u,tv)
 =2\operatorname{Re}g_{t,\alpha}(u,v)
   +t^4Q_\alpha(u,v)^*\Sigma_\alpha Q_\alpha(u,v)+O(t^5).
\end{equation}
Here $C_\alpha$ is invertible, $\Sigma_\alpha$ is a signature matrix,
$\deg Q_\alpha\le2$, and $Q_\alpha(0)=0$.
The error and coefficient bounds are uniform for $0<t\le t_0$ in
terms of the fixed sets and the $C^5$ bound for $\phi$.
\end{lemma}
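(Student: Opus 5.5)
The plan is to expand $\phi$ to fourth order at $0$ in the scaled variables, move every pluriharmonic piece into $2\Rea g_{t,\alpha}$, and show that the mixed part of order $t^4$ is a Hermitian square of rank $m$ in the variable $u$, up to a $v$-dependent holomorphic shift. Write $x=t\alpha+t^2u$ and $y=tv$, so both $x$ and $y$ are $O(t)$ on the fixed sets. By Taylor's theorem with the $C^5$ bound, $\phi=T_4\phi+O(t^5)$ uniformly for bounded $\alpha$ and $(u,v)$ in the normalized box. Polarize $T_4\phi$ and write it as $2\Rea(\text{holomorphic polynomial})$ plus its mixed part, namely the monomials $z^a\bar z^b$ with $|a|,|b|\ge1$ (for example via $\mathcal P_0$ of \eqref{fi:it:eq:mixed-projection}). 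After substitution, every term of order $t^5$ or higher goes into the error. Any term that is holomorphic or antiholomorphic in $(u,v)$, constants included, goes into $g_{t,\alpha}$. This also allows us to normalize $Q_\alpha(0)=0$.

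\textbf{Step 1: use constant rank.} Near $0$ the Levi matrix is $H=\begin{pmatrix}H_{xx}&H_{xy}\\ H_{yx}&H_{yy}\end{pmatrix}$, and $H_{xx}$ is invertible near $0$. Constant rank $m$ forces the Schur complement to vanish:
\[
 H_{yy}=H_{yx}H_{xx}^{-1}H_{xy}\quad\text{near }0.
\]
Since $H_{xy}(0)=0$, this gives $H_{yy}=O(|z|^2)$. Its Taylor part of order one therefore vanishes, and its order-two part equals $H_{yx}^{(1)}A_0^{-1}H_{xy}^{(1)}$, where $H^{(1)}$ denotes the linear parts. Consequently every cubic mixed monomial containing both a $y$ and a $\bar y$ factor is absent from $T_4\phi$. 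The quartic terms that are quadratic in both $y$ and $\bar y$ are determined by $H^{(1)}_{xy}$ and $A_0^{-1}$.

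\textbf{Step 2: count powers of $t$.} The quadratic term $x^*A_0x$ contributes $t^4u^*A_0u$ plus pluriharmonic terms. Any surviving cubic mixed monomial has an $x$-factor or an $\bar x$-factor. At order $t^3$ all such factors equal $t\alpha$, which is a constant, so the $t^3$ part is pluriharmonic. At order $t^4$ exactly one factor is replaced by $t^2u$ or $t^2\bar u$. The resulting terms are mixed and of the form $2\Rea\bigl(u^*\,\ell_\alpha(v)\bigr)$, with $\ell_\alpha$ holomorphic, linear in $v$, and of degree at most two overall. The quartic terms contribute $t^4$ times their values at $(\alpha,v)$. Collecting everything, the mixed $t^4$ kernel has the form
\[
 u^*A_0u+2\Rea\bigl(u^*A_0\,G_0(v)\bigr)+G_0(v)^*A_0G_0(v)
 =\bigl(u+G_0(v)\bigr)^*A_0\bigl(u+G_0(v)\bigr),
\]
with $G_0$ a holomorphic polynomial of degree at most two in $v$ with bounded coefficients. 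The key point is that the $v\bar v$ part is exactly $G_0^*A_0G_0$; this is where the Schur complement identity from Step 1 enters. Now diagonalize $A_0=U^*\Lambda U$ with $U$ unitary, and set $\Sigma_\alpha=\operatorname{sgn}\Lambda$, $C_\alpha=|\Lambda|^{1/2}U$ and $G_\alpha=C_\alpha G_0$. Then $Q_\alpha=C_\alpha u+G_\alpha(v)$ has the required form, and $C_\alpha$ is invertible and independent of $\alpha$. Coefficient bounds follow because $|\alpha|$ is bounded and $\|A_0^{-1}\|$ is fixed.

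\textbf{Main obstacle.} The hard part is the bookkeeping in Step 2: checking that the $v\bar v$ mixed terms at order $t^4$ match $G_0^*A_0G_0$ exactly. These terms come from quartic $yy\bar y\bar y$ monomials and from $|\alpha|$-weighted cubic and quartic terms, and no leftover indefinite part in $v$ may survive. If the explicit matching becomes unwieldy, the fallback is structural. The mixed $t^4$ kernel is the polarization of a Levi form of rank at most $m$, inherited from the rank-$m$ Levi matrix of the rescaled $\phi$, so its section space has dimension at most $m$. Lemma~\ref{fi:new:lem:fekete} then factors it as $Z^*\Sigma Z$ with bounded $Z$. Since the $u\bar u$ block is $A_0$, which is invertible, $Z$ can be taken affine in $u$ with invertible linear part, and this recovers the form $C_\alpha u+G_\alpha(v)$.
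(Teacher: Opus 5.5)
Your Steps 1--2 reproduce the qualitative core of the paper's proof. The Schur-complement jets $D_0=D_1=0$ and $D_2=B_1^*A_0^{-1}B_1$ force the mixed $t^4$ kernel to be $(u+G_0(v))^*A_0(u+G_0(v))$ with $G_0=A_0^{-1}\ell_\alpha$. (A small slip: $\ell_\alpha$ is of degree at most two in $v$, since $\bar x yy$ monomials contribute; it is not linear in $v$.)

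\textbf{The gap is in your last step.} The lemma requires the coefficients of $Q_\alpha$ to be bounded in terms of the fixed sets and the $C^5$ norm of $\phi$ only. The quantity $\|A_0^{-1}\|$ is not controlled by that norm. In the application, the centers are arbitrary regular points with no lower bound on the nonzero curvature eigenvalues, so the statement ``$\|A_0^{-1}\|$ is fixed'' is exactly what may not be used. Your recipe gives $G_\alpha=C_\alpha A_0^{-1}\ell_\alpha=\Sigma_\alpha|\Lambda|^{-1/2}U\ell_\alpha$. If $A_0$ is semidefinite you are rescued, because $|G_\alpha|^2=\pm\ell_\alpha^*A_0^{-1}\ell_\alpha$ is the bounded $v\bar v$ Taylor coefficient. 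For indefinite $A_0$ that coefficient can cancel while $G_\alpha$ blows up.

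Concretely, on $\C^3$ take $\phi=\delta(|x_1|^2-|x_2|^2)+2\Rea\bigl((\bar x_1-\bar x_2)y^2\bigr)$ with $0<\delta\le1$. Its Levi matrix has rank exactly $2$ everywhere, because the determinant is $-4\delta|y|^2+4\delta|y|^2=0$ while the $x$-block is invertible. Its $C^5$ norm is bounded independently of $\delta$. Yet your factorization gives $C_\alpha=\sqrt\delta\,I$ and $G_\alpha(v)=\delta^{-1/2}(v^2,v^2)$, which is unbounded as $\delta\to0$.

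\textbf{The fix is the step you list only as a fallback; in the paper it is the essential step.} Apply Lemma~\ref{fi:new:lem:fekete} to the polarization of the mixed $t^4$ part. Its coefficients are Taylor coefficients of $\phi$ combined with $\alpha$, so they are bounded by the $C^5$ norm. The resulting bound $\sup|Z|\le C_m\sqrt B$ never involves $A_0^{-1}$; Cauchy estimates then bound the coefficients of $Q_\alpha$. Your Step 2 identity is used only qualitatively, for three things:
\begin{itemize}
\item it shows the section space has dimension exactly $m$, with the upper bound from the formula and the lower bound from the invertible $u\bar u$ block;
\item it shows the sections are affine in $u$ and vanish at $0$, which gives the form $C_\alpha u+G_\alpha(v)$ and $Q_\alpha(0)=0$;
\item it gives $C_\alpha^*\Sigma_\alpha C_\alpha=A_0$, hence invertibility.
\end{itemize}

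The justification in your fallback, ``Levi rank at most $m$, hence section space of dimension at most $m$'', is false in general: $|z_1|^2+|z_1|^4$ has Levi rank one but a two-dimensional section space. In the present setting the dimension bound holds only because of the Schur-complement structure, that is, because of the identity from your Step 2.

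In the example above, the section space has the bounded basis $u_1-u_2$, $\delta u_1+v^2$. In this basis the kernel has the bounded Gram matrix $\bigl(\begin{smallmatrix}-\delta&1\\ 1&0\end{smallmatrix}\bigr)$, whose eigenvalues are of size one. Diagonalizing it therefore gives a uniformly bounded $Q_\alpha$, and its $u$-coefficient matrix is invertible, though not uniformly so.
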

\end{samepage}

\begin{proof}
The argument uses the constant-rank hypothesis
only to identify finite Taylor coefficients at the origin.

\emph{Step 1: the limiting Levi form.}
Write the Levi matrix in the $x,y$ coordinates as below; its
$x,x$, $x,y$ and $y,y$ blocks $A,B,D$ have sizes
$m\times m$, $m\times(n-m)$ and $(n-m)\times(n-m)$:
\[
 H=\begin{pmatrix}A&B\\B^*&D\end{pmatrix}.
\]
A subscript $j$ denotes the homogeneous Taylor term of real degree $j$. On the constant-rank germ the Schur complement vanishes.
Since $B_0=0$, comparison through degree two gives
\begin{equation}\label{fi:oa:Schur-jets}
 D=B^*A^{-1}B,\qquad D_0=D_1=0,\qquad
 D_2=B_1^*A_0^{-1}B_1.
\end{equation}
For $1\le a\le m$ and $1\le i,j\le n-m$, commutation of scalar
derivatives gives
\[
 \partial_{\bar y_i}B_{aj}(0)
 =\partial_{\bar x_a}D_{ij}(0)=0,\qquad
 \partial_{y_i}(B_1)_{aj}=\partial_{y_j}(B_1)_{ai}.
\]
Thus $\mathcal B_\alpha(v):=B_1(\alpha,v)$ is affine and holomorphic
in $v$, and its rows are closed one-forms. They have quadratic
primitives $\mathcal G_\alpha(0)=0$ with
$D_v\mathcal G_\alpha=\mathcal B_\alpha$. Explicitly, a row
$c_j+\sum_i c_{ji}v_i$, with $c_{ji}=c_{ij}$, has primitive
$\sum_jc_jv_j+\tfrac12\sum_{i,j}c_{ji}v_iv_j$.
All these coefficients are bounded by the given derivative bounds.

For $\Psi_{t,\alpha}(u,v)=(t\alpha+t^2u,tv)$, the chain rule and
\eqref{fi:oa:Schur-jets} yield, uniformly on fixed normalized boxes,
\begin{align}
 H_{t^{-4}\phi\circ\Psi_{t,\alpha}}
 &=\begin{pmatrix}
 A(\Psi_{t,\alpha})&t^{-1}B(\Psi_{t,\alpha})\\
 t^{-1}B(\Psi_{t,\alpha})^*&t^{-2}D(\Psi_{t,\alpha})
 \end{pmatrix}\longrightarrow L_\alpha:=
 \begin{pmatrix}A_0&\mathcal B_\alpha(v)\\
 \mathcal B_\alpha(v)^*&D_2(\alpha,v)\end{pmatrix}.
 \label{fi:oa:Levi-limit}
\end{align}
Indeed $B(\Psi)=tB_1(\alpha,v)+O(t^2)$ and
$D(\Psi)=t^2D_2(\alpha,v)+O(t^3)$.

\emph{Step 2: identify the mixed fourth-order polynomial.}
Expand the ordinary fourth Taylor polynomial as
\[
 (T_4\phi)\circ\Psi_{t,\alpha}
       =\sum_{j=0}^{8}t^jP_{j,\alpha}(u,v).
\]
Its coefficients are uniformly bounded. Because
$H_\phi-H_{T_4\phi}=O(|z|^3)$ and the largest factor in the scaled
pullback is $t^{-2}$, its Levi matrix has the same limit as
\eqref{fi:oa:Levi-limit}. Comparing powers of $t$ gives
\[
 H_{P_{j,\alpha}}=0\quad(j<4),\qquad H_{P_{4,\alpha}}=L_\alpha.
\]
By \eqref{fi:oa:Schur-jets}, the same limiting Levi form is obtained
from
\begin{equation}\label{fi:oa:qualitative-square}
 W_\alpha^*A_0W_\alpha,
 \qquad W_\alpha(u,v)=u+A_0^{-1}\mathcal G_\alpha(v).
\end{equation}
A polynomial with zero Levi matrix has no mixed monomials:
differentiating a monomial containing both $z$ and $\bar z$ once
in each type detects its coefficient, with no cancellation between
distinct monomials. A real such polynomial therefore equals
$2\operatorname{Re}h$ for a holomorphic polynomial $h$.
Since $W_\alpha(0)=0$, \eqref{fi:oa:qualitative-square} is entirely
mixed and equals the mixed part of $P_{4,\alpha}$.

\emph{Step 3: bounded holomorphic functions.}
Let $M_\alpha(z,\bar\zeta)$ be the polarization of the mixed part
of $P_{4,\alpha}$, where $z=(u,v)$. Its coefficients are Taylor
coefficients of $\phi$ at the center, combined polynomially with
$\alpha$; hence its norm on every fixed product is bounded. The
identity \eqref{fi:oa:qualitative-square} gives
\[
 M_\alpha(z,\bar\zeta)=W_\alpha(\zeta)^*A_0W_\alpha(z).
\]
This formula contains $A_0^{-1}$ and is used only qualitatively.
Its section space has dimension exactly $m$: the formula gives
the upper bound, and its $u$ Levi block $A_0$ gives the lower bound.
Lemma~\ref{fi:new:lem:fekete} therefore supplies a bounded holomorphic
column $Q_\alpha$ of length $m$ and a signature matrix $\Sigma_\alpha$.
Each component of $Q_\alpha$ is a
linear combination of kernel sections, which are polynomials of
degree at most two and affine in $u$. Thus
$Q_\alpha=C_\alpha u+G_\alpha(v)$; evaluation at the origin gives
$Q_\alpha(0)=0$. Cauchy estimates on a fixed polydisc inside the
Fekete compact bound every polynomial coefficient. Finally,
\[
 C_\alpha^*\Sigma_\alpha C_\alpha=A_0
\]
makes $C_\alpha$ invertible. The bound for $Q_\alpha$ comes from
$M_\alpha$, not from this last identity.

Finally, write $P_{j,\alpha}=2\operatorname{Re}h_{j,\alpha}$ for
$j<4$, and do the same for the pure part of $P_{4,\alpha}$.
Extracting pure coefficients is a bounded operation, and their
weighted sum defines $g_{t,\alpha}$. Terms with $j\ge5$ and the
ordinary scalar Taylor remainder are $O(t^5)$ on the whole normalized
box. This proves \eqref{fi:oa:parabolic-error} with all asserted bounds.

The constant-rank hypothesis entered only through the jet identities
\eqref{fi:oa:Schur-jets} at the origin. The working box need not lie
inside the constant-rank germ, and $t_0$ depends on the fixed
neighborhood carrying the $C^5$ bound, not on that germ.
\end{proof}

\section{Product maps from complex cells}\label{fi:cell:sec:cells}

Fix nested polydiscs $\Delta^-\Subset\Delta\Subset\Delta^+$ in
$\C^N$, dimensions $N\ge m$, and bounds $M,D>0$.
Algebraic complexity means graph degree, as in
\cite[\S2.2.3]{BN19}; it does not bound coefficients from below.
A product domain below has a fixed inner core and a larger domain,
with fixed positive margins between them. Constants depend only on
$N,m,M,D$ and these margins, and not on the accuracy $\epsilon$.

\begin{samepage}
\begin{proposition}[Product maps]\label{fi:cell:prop:targetproducts}
Let $Q:\Delta^+\to\C^m$ be algebraic and holomorphic, with norm
at most $M$ and graph complexity at most $D$.
For $0<\epsilon<1/2$ there are at most
$C(1+\log(1/\epsilon))^L$ algebraic holomorphic maps
\[
 f_j:D_{b,j}\times D_{V,j}\longrightarrow\Delta,
 \qquad q_j:D_{b,j}\longrightarrow\C^m
\]
on enlarged product domains. For every $z\in\Delta^-$, there are
an index $j$ and parameters $(b,V)\in D_{b,j}^-\times D_{V,j}^-$ such that
\[
 |z-f_j(b,V)|\le\epsilon.
\]
On each enlarged product,
\[
 Q(f_j(b,V))=q_j(b),\qquad
 \dim D_{b,j}\le m,\qquad
 \dim D_{b,j}+\dim D_{V,j}\le N.
\]
The graph complexities are uniformly bounded. On fixed smaller
product domains, compactly contained in the enlarged domains and
still containing the inner cores with fixed positive margins, all
derivatives of each fixed order are uniformly bounded. The images
of the enlarged domains stay a fixed distance inside $\Delta^+$.
\end{proposition}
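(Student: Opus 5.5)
The plan is to apply the cellular parametrization theorem \cite[Theorem~8]{BN19} to the graph of $Q$ rather than to $\Delta$ directly. Set
\[
\Gamma=\{(z,Q(z)):z\in\Delta\}\subset\C^N\times\C^m,
\]
an algebraic set of bounded complexity. We first parametrize the image $Q(\Delta)\subset\C^m$, a constructible set of dimension $d\le m$ and bounded complexity. The first step is to cover a neighborhood of $Q(\overline{\Delta^-})$ by complex cells of dimension at most $m$ with bounded complexity. By the theorem, there are $C(1+\log(1/\epsilon))^{L}$ cellular maps $q_j:D_{b,j}\to\C^m$, each extending to a larger $\delta$-extension with bounded derivatives, whose images cover $Q(\Delta^-)$. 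These give the base parameters $b$ with $\dim D_{b,j}\le m$.

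The second step treats the fibers. Over each base cell, consider the fibered family
\[
\{(b,z):Q(z)=q_j(b),\ z\in\Delta\}.
\]
This is again an algebraic family of bounded complexity. The fibered version of the cellular parametrization, which is built into \cite{BN19} since cells are constructed inductively over coordinate projections, yields cells in the combined variables $(b,V)$. Each such cell has the form of a cell over a subcell of the base followed by fiber variables $V$, with maps
\[
(b,V)\mapsto \bigl(b,f_j(b,V)\bigr),\qquad Q(f_j(b,V))=q_j(b).
\]
The dimension bound $\dim b+\dim V\le N$ holds because the total cell dimension is at most the dimension of the family, namely $\dim\{(b,z)\}\le N$. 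This requires choosing each base cell as a parametrization that is generically finite onto its image, so that $b$ adds no dimensions beyond $z$. The count of cells multiplies over the two stages but remains polylogarithmic in $1/\epsilon$. The $\epsilon$-density statement $|z-f_j(b,V)|\le\epsilon$ for $z\in\Delta^-$ comes from the fact that cell images cover the set up to the discarded lower-dimensional parts and the cell boundaries. We therefore cover exactly, using the theorem's covering of the whole set by images of the inner cores, and the $\epsilon$ absorbs the replacement of degenerate cells (radii below $\epsilon$) by points or lower-dimensional cells. This is where the $\log(1/\epsilon)$ count enters, from the annular and punctured-disc cells of small radius.

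The third step is to convert cells into product domains with fixed margins. A general cell is a tower of discs, annuli and punctured discs whose radii depend on earlier coordinates, and this tower is not a product. We apply the preparation part of the theorem: on a $\delta$-extension, each cell admits a cellular map from a standard cell with constant radii (discs $D(1)$, annuli $A(r,1)$, punctured discs). We then cover each standard annulus of modulus up to $\log(1/\epsilon)$ by $O(\log(1/\epsilon))$ discs of fixed relative margin, via the exponential map or logarithmic subdivision. The bounded derivative bound on the fixed smaller product domains follows from the holomorphic extension to the $\delta$-extension together with Cauchy estimates. The requirement that images stay inside $\Delta^+$ follows by running the construction for the intermediate polydisc $\Delta$ and using the bound $M$ on $Q$ on $\Delta^+$.

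I expect the main obstacle to be the second step. It requires ensuring that the fiber variables $V$ are genuinely passive, so that $Q\circ f_j$ depends only on $b$ on the full enlarged product and not merely on the core. It also requires the dimension count $\dim b\le m$ uniformly. My first approach would be to order the coordinates of $\C^m\times\C^N$ with the $Q$-values first and apply the theorem to $\Gamma$ directly. Cells compatible with this order automatically have their first $m$-coordinate block depending only on the leading cell variables. The identity $Q(f)=q(b)$ then holds on the extension by analytic continuation from the graph, and the projection to $z$ gives $f_j$.
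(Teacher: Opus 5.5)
Your overall architecture agrees with the paper's. Put the target coordinates $q$ first, apply \cite[Theorem~8]{BN19} to the graph $q=Q(z)$, and read off that on each retained cell $Q\circ F$ depends only on a coordinate prefix with at most $m$ free variables. This is your last paragraph, and it makes the fibration in Steps~1--2 unnecessary. That step produces a number of cells bounded independently of $\epsilon$, so the factor $(1+\log(1/\epsilon))^L$ does not come from the cellular theorem.

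The genuine gap is the passage from cells to fixed-margin products: your Step~3, together with the claim in Step~2 that $\epsilon$ absorbs degenerate cells. There is no normal form by constant-radius cells. On an annular fiber $A(a(b),g(b))$ the ratio $h=a/g$ has no positive lower bound over the base (e.g.\ $0<|b|<1$, $|b|<|\zeta|<1$). Any holomorphic map $P$ from a fixed model fiber into the extended fiber covers, on the model core, only a band on which $\log(\lambda^{-1}/|\zeta|)$ varies by a bounded factor. This follows from Harnack's inequality for the positive harmonic function $\log(\lambda^{-1}/|\zeta\circ P|)$. So boundedly many models cannot cover fibers with $|h(b)|\to0$, and approximation is unavoidable. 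Your mechanism for it fails, however. Replacing parts of small radius by points does not cost only $\epsilon$, because the components of $F$ may contain bounded functions such as $h(b)/\zeta$, which vary at unit scale on $\{|h(b)|<|\zeta|<\epsilon\}$.

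What is needed is Lemma~\ref{fi:cell:lem:products}, an induction on free dimension that starts from the last fiber and retains every parameter.
\begin{itemize}
\item Disc fibers are extracted as parameters, after filling punctures by removable singularities.
\item The two ends of an annulus, $\vartheta\le|\zeta|<1$ and $\vartheta\le|h/\zeta|<1$, are covered by $O(1+\log(1/\vartheta))$ affine discs $\zeta=\rho e^{i\theta}(1+ew)$. Such a disc stays in the extended fiber over the \emph{whole} base only after the base is refined to $\{|h(b)|<\beta\rho\}$. That refinement is a second application of the cellular theorem, with counts uniform in $\rho$ (Lemma~\ref{fi:cell:lem:graphrefinement}).
\item In the middle, Laurent estimates give $|F-c_0|\lesssim\vartheta$ only where both $|\zeta|$ and $|h/\zeta|$ are below $\vartheta$. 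The substituted point must depend holomorphically on the base, and the natural choice $\zeta=\sqrt h$ exists only on the cellular two-cover of \cite{BN19}.
\end{itemize}
Halving the accuracy at each of at most $N$ levels gives the count $C(1+\log(1/\epsilon))^N$.

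Finally, these operations must transport the identity $Q\circ F=\mathcal Q(c')$ exactly. The refinements and the two-cover are triangular, so coordinate prefixes survive. Extraction moves a counted coordinate into the parameter list, and midpoint substitution deletes it. This bookkeeping is what gives $Q(f_j(b,V))=q_j(b)$ with $\dim D_{b,j}\le m$ on the final products, and your proposal contains no argument for it.
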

\end{samepage}

The exact identity holds on the enlarged domains. It is used in
Section~\ref{fi:it:sec:invariant} to grid only $b$; the coordinates
$V$ remain as parameters. The covering error is handled by the tubes
of Sections~\ref{fi:it:sec:relative} and~\ref{fi:it:sec:metric}.
No injectivity, immersion or controlled inverse is required.

We prove the proposition below from the cellular parametrization
theorem of Binyamini--Novikov \cite{BN19}. That work treats
degenerating annuli \cite[\S1.2.1]{BN19}, Laurent analysis on cells
\cite[\S9]{BN19}, and thin annular and punctured-disc fibers in
finite-precision interpolation \cite[Appendix~B.3, Lemma~94]{BN19}.
The statements proved here retain all passive parameters while
preserving $Q\circ f_j=q_j(b)$ with $\dim b\le m$ and a
polylogarithmic number of maps.

\subsection{Cellular input}

Write $D(g)=\{z:|z|<|g|\}$,
$D_\circ(g)=\{z:0<|z|<|g|\}$, and
$A(a,g)=\{z:|a|<|z|<|g|\}$. A complex cell is built successively
from point, disc, punctured-disc, and annulus fibers, whose radii are
nonvanishing bounded holomorphic functions of the earlier coordinates.
For $0<\lambda<1$, an extension expands a disc radius to
$\lambda^{-1}|g|$ and an annulus to
$\lambda|a|<|z|<\lambda^{-1}|g|$, while extending the base as well.
$0<|a|<|g|$ must still hold on the extended base.
These are the conventions of \cite[Definitions~7--8]{BN19}.
We write $C^\lambda$ for this extension of a cell $C$.
Its free dimension is the number of nonpoint fibers; a coordinate
prefix consists of the first coordinates in the cellular order.

A cellular map is triangular in the coordinate order; its $j$th
component is a monic polynomial of positive degree in the $j$th
variable with holomorphic coefficients in earlier variables.
The cellular parametrization theorem \cite[Theorem~8]{BN19} gives
covers compatible with finitely many bounded holomorphic functions:
each pullback is identically zero or nowhere zero. In bounded
algebraic format, the number and algebraic complexity of the cells
are bounded in terms of degrees and fixed extension parameters,
independently of coefficients. To use its $\{\rho\}$-extensions with
our scalar extensions, the conversion must be made by fiber type.
By \cite[\S2.1, equation~(29)]{BN19}, write $\delta_D(\rho)$ for the
solution in $(0,1)$ of
\[
 \rho=\frac{2\pi\delta_D}{1-\delta_D^2},
 \qquad
 \delta_{D_\circ}(\rho)=\delta_A(\rho)
       =\exp\!\left(-\frac{\pi^2}{2\rho}\right).
\]
A $\{\rho\}$-extension uses these ratios in the corresponding
coordinates; point fibers are unchanged. For any prescribed scalar
input and output ratios $\lambda_{\rm in},\lambda_{\rm out}\in(0,1)$,
choose fixed $\rho,\sigma>0$ such that
\[
 \min_F\delta_F(\rho)\ge\lambda_{\rm in},\qquad
 \max_F\delta_F(\sigma)\le\lambda_{\rm out},
 \qquad F\in\{D,D_\circ,A\}.
\]
The cellular theorem is then applied on
$C^{\{\rho\}}\subset C^{\lambda_{\rm in}}$, and its output maps
are restricted from $C_j^{\{\sigma\}}$ to
$C_j^{\lambda_{\rm out}}\subset C_j^{\{\sigma\}}$.
The core images still cover $C$. Thus all uses below have fixed
scalar input and output extensions, with constants depending only
on those ratios and the fixed format. An algebraic holomorphic graph is an analytic component of a
bounded-degree algebraic variety restricted to its domain. All
degree bounds are fixed before the large parameter $p$ varies.

\begin{samepage}
\begin{lemma}[Graph refinement]\label{fi:cell:lem:graphrefinement}
Let $B$ be an algebraic complex cell with fixed extension $B^\lambda$,
and let $h$ be algebraic and holomorphic there, with $|h|\le1$.
Fix $\lambda<\lambda_1<1$ and $\beta>1$. For $0<\rho\le1$,
$\{b\in B:|h(b)|<\beta\rho\}$ is covered by projected cores of
extended algebraic cells of dimension at most $\dim B$, satisfying
\begin{equation}\label{fi:cell:eq:graphbound}
 b\in B^\lambda,\qquad |h(b)|<\beta\rho/\lambda_1
\end{equation}
on their extensions. Their projections preserve every coordinate
prefix; their number and complexity are uniform in $\rho$.
\end{lemma}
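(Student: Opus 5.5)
The plan is to apply the cellular parametrization theorem \cite[Theorem~8]{BN19} directly to the single function
\[
 H(b,w)=\frac{h(b)}{\beta}-w\quad\text{on}\quad B\times\Delta_{w},
\]
where $w$ is one new coordinate appended after the coordinates of $B$. A more economical route is to work on $B$ itself with the two functions $h$ and $h-\beta\rho e^{i\theta}$, but then $\rho$ would appear in the algebraic format. I would therefore first rescale so that the data are independent of $\rho$.

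\textbf{Step 1 (decomposition independent of $\rho$).} Since $|h|\le1$, the dichotomy ``$h$ is identically zero or nowhere zero'' can be used directly. Apply the cellular theorem on $B^{\{\rho_0\}}\subset B^{\lambda}$ to the single function $h$, with $\rho_0$ a fixed extension parameter chosen by fiber type as in the conversion above. This produces finitely many cells $C_j$ with cellular maps $\phi_j:C_j^{\{\sigma\}}\to B^\lambda$. Their cores cover $B$, and on each cell either $h\circ\phi_j\equiv0$ or $h\circ\phi_j$ is nowhere zero. The number and complexity of the $C_j$ depend only on the format of $(B,h)$ and on $\lambda,\lambda_1$, not on $\rho$. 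Cells with $h\circ\phi_j\equiv0$ are retained unchanged, since \eqref{fi:cell:eq:graphbound} holds trivially on them.

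\textbf{Step 2 (a nowhere-zero $h$ on a cell).} For the remaining cells, the key observation is that a nowhere-zero bounded holomorphic function on a complex cell has an approximately constant modulus on each cell produced by a second cellular refinement in which $\log|h|$ has bounded oscillation. This is the ``monomialization'' or unit-times-monomial structure from \cite[\S9]{BN19}: after refinement, $h=u\cdot\prod z_i^{k_i}\cdot(\text{earlier radii})$ with $u$ a unit of controlled oscillation on the extension. I expect this to be the main obstacle, because what is needed is a $\rho$-uniform statement. That requires the preparation theorem of \cite{BN19} and its Laurent analysis, not the bare Theorem~8.

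Given this structure, the condition $|h|<\beta\rho$ becomes a condition of the form $|z_i|<\text{radius}$ on the last coordinate in which $h$ depends nontrivially, intersected with a base condition. I would then subdivide each annulus or disc fiber into at most two subcells. The first is the inner part, a disc or annulus of outer radius $(\beta\rho/|u\cdots|)^{1/k_i}$. This radius is again a holomorphic function of earlier coordinates, up to the bounded oscillation of $u$, which is absorbed by the margin $\lambda_1/\lambda$. The second is the outer part, on which $|h|\ge\beta\rho$ and which is discarded. Cells lying entirely in $\{|h|\ge\beta\rho\}$ are dropped. The number of subcells is independent of $\rho$ and so is the complexity, because $\rho$ enters only as a constant multiplier of a radius function, and graph degree does not see coefficients.

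\textbf{Step 3 (extensions and prefixes).} The remaining points are routine. Extending the new inner fiber by the ratio $\lambda_1^{-1}$ keeps $|h|<\beta\rho/\lambda_1$, after adjusting for the fixed oscillation by taking the sub-fiber radius with a factor slightly less than one. The extended cells stay in $B^\lambda$ because the $\phi_j$ map extensions into $B^{\{\rho_0\}}$. Triangularity of cellular maps gives preservation of every coordinate prefix, and the dimensions do not increase since each step only refines fibers.
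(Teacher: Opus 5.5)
\textbf{There is a genuine gap, in Step~2.} That step is where the whole content of the lemma would have to be proved, and it is not.

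You are pushed into Step~2 by a false premise. In bounded algebraic format, the cellular theorem bounds the number and complexity of cells by degrees and the fixed extension parameters only, independently of coefficients. So $\rho$ appearing as a coefficient of a fixed-degree equation costs nothing. What must be avoided is an unbounded function such as $\xi-h(b)/(\beta\rho)$, that is, $1/\rho$ rather than $\rho$.

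Step~2 rests on a unit-times-monomial refinement that you attribute loosely to \cite[\S9]{BN19}, with no precise statement. Even granting that refinement, the step does not close, for four reasons.

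\emph{The bound on extensions.} To cover $\{|h|<\beta\rho\}$, the new radius must be chosen where $|u|$ is smallest. Then on the new fiber, and more so on its $\lambda_1^{-1}$-extension, $|h|$ can reach $K\beta\rho$ times an extension loss, where $K$ is the oscillation ratio of $|u|$. Nothing makes this smaller than the prescribed $\beta\rho/\lambda_1$. Shrinking the radius by a factor below one, as you propose in Step~3, destroys the covering.

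\emph{The radius is not a cell radius.} The proposed radius $(\beta\rho/|u\cdots|)^{1/k_i}$ involves $u$, which still depends on $z_i$. So the piece you cut out is not a fiber over the earlier coordinates.

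\emph{Negative exponents.} On annular fibers the exponent may be negative, which turns the sublevel set into an outer annulus rather than an inner disc.

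\emph{Intersection with existing fibers.} Intersecting the new disc with an existing fiber $|a|<|z_i|<|g|$ requires splitting the base by $\rho$-dependent comparisons such as $|a|<|R|$. That is a sublevel problem of the same kind one dimension lower, and you set up no induction with margins and $\rho$-uniform counts to handle it.

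\textbf{The fix is already in your first sentence.} Take the $w$-disc to be $D(\rho)$, or equivalently substitute $w=\rho\xi$. Then apply the cellular theorem once to $E(b,\xi)=\beta\rho\xi-h(b)$ on $B\times D(1)$. Use the scalar input extension $B^{\lambda_1}\times D(\lambda_1^{-1})\subset B^\lambda\times D(\lambda_1^{-1})$ and a fixed output extension. The function $E$ is bounded uniformly for $\rho\le1$ and has fixed format.

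Every pullback of $E$ is identically zero or nowhere zero. Hence the cells with $E\equiv0$ have cores covering the graph $\xi=h(b)/(\beta\rho)$ over $\{b\in B:|h(b)|<\beta\rho\}$. On their extensions, $b\in B^{\lambda_1}\subset B^\lambda$ and $|h(b)|=\beta\rho|\xi|<\beta\rho/\lambda_1$. This is \eqref{fi:cell:eq:graphbound}, with no oscillation to control.

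The last fiber of such a cell must be a point. Otherwise the $\xi$-component of the triangular cellular map would be a nonconstant monic polynomial in the last variable, while $h(b)$ does not depend on it. Deleting that point fiber gives dimension at most $\dim B$, and triangularity in the order $(b,\xi)$ preserves prefixes.

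This is the paper's argument. No monomialization, preparation theorem or Laurent analysis enters.
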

\end{samepage}
\begin{proof}
Apply the cellular theorem to
\[
 E(b,\xi)=\beta\rho\xi-h(b)
\]
on the cell $B\times D(1)$, using the scalar input extension
$B^{\lambda_1}\times D(\lambda_1^{-1})\subset
B^\lambda\times D(\lambda_1^{-1})$ and a fixed output extension.
Retain the cells on which $E$ vanishes identically. Their core images
cover the graph over the stated sublevel, and their extended images
satisfy \eqref{fi:cell:eq:graphbound}. The last input fiber of a retained cell
must be a point: otherwise its output $\xi$ is a nonconstant monic
polynomial in that last variable, while $b$, and hence $h(b)$, is
independent of it. This contradicts $E=0$. Delete that point fiber.
The dimension has not increased, and triangularity is preserved.
The equation uses $\rho$, not $1/\rho$; its degree and algebraic
format are fixed. The cellular theorem therefore gives the required
coefficient-uniform bounds. Lower-dimensional zero cells are retained.
\end{proof}

\subsection{Approximation with parameters}

For the next lemma, fix a cell length, a complexity bound, an
extension ratio $0<\lambda<1$, a positive target dimension $N_{\rm out}$,
and $M>0$. Here
$\Delta_M^{N_{\rm out}}=\{w:|w_j|<M\text{ for every }j\}$;
all vector norms are Euclidean. Fix parameter polydiscs
$P^-\Subset P^+\subset\C^s$. The constants and product margins
depend only on these data.

\enlargethispage{2\baselineskip}
\begin{samepage}
\begin{lemma}[Product approximation]\label{fi:cell:lem:products}
Let $C$ be an algebraic complex cell of free dimension $d$ and of
the fixed complexity, and let
$F:C^\lambda\times P^+\to\Delta_M^{N_{\rm out}}$ be holomorphic.
For $0<\epsilon<1/2$ there are at most
\begin{equation}\label{fi:cell:eq:productcount}
 A(1+\log(1/\epsilon))^d
\end{equation}
holomorphic maps $H_j(v,\omega)$ to $\Delta_M^{N_{\rm out}}$ on enlarged product
domains, with $\dim v\le d$ and uniform buffers, such that
\begin{equation}\label{fi:cell:eq:passive}
 \forall c\in C\quad\exists j,v\text{ in the inner core}:\quad
 \sup_{\omega\in P^-}|F(c,\omega)-H_j(v,\omega)|\le\epsilon.
\end{equation}
All $s$ parameters $\omega$ are retained. If $Q$ is holomorphic on a
larger ambient polydisc and
\[
 Q(F(c,\omega))=\mathcal Q(c',\omega_I),
\]
where $c'$ is a prefix with at most $a$ free coordinates and
$\omega_I$ consists of $b$ parameter coordinates, there are
holomorphic maps $q_j$ such that, after reordering $\xi=(v,\omega)$,
\[
 Q(H_j(\xi))=q_j(\xi_1,\ldots,\xi_{a_j}),\qquad a_j\le a+b.
\]
For algebraic $F$ of bounded graph complexity, the maps $H_j$ also
have bounded graph complexity, uniformly in $\epsilon$.
\end{lemma}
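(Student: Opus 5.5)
We would prove Lemma~\ref{fi:cell:lem:products} by induction on the number of fibers of $C$, treating one fiber at a time in the cellular order. Every map we produce will have the form $H_j=F\circ(\Phi_j\times\mathrm{id}_{P^+})$, where $\Phi_j$ is a triangular algebraic holomorphic substitution from a normalized product domain into $C^\lambda$. We never replace $F$ by a truncated expansion of itself. With this form three properties are automatic. First, $H_j$ takes values in $\Delta_M^{N_{\rm out}}$. Second, all $s$ parameters $\omega$ are retained unchanged. Third, the identity $Q(F(c,\omega))=\mathcal Q(c',\omega_I)$ transfers to $Q(H_j(\xi))=\mathcal Q(\Phi_j(v)',\omega_I)$. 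Since $\Phi_j$ is triangular, the prefix $\Phi_j(v)'$ depends only on the corresponding prefix of $v$, which has at most $a$ coordinates. After reordering $\xi=(v,\omega)$ this gives $a_j\le a+b$. Bounded graph complexity of $H_j$ follows from the bounded complexity of each $\Phi_j$, because the substitutions have fixed degree. We run the induction with accuracy $\epsilon/\ell$ at each of the $\ell$ fibers and add the errors by the triangle inequality. This is legitimate because each intermediate composite is again a holomorphic map on an extended cell, bounded by $M$.

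\emph{Fiber step.} Suppose the coordinates $c'$ of the earlier fibers are already handled, and the next fiber is $z$.
\begin{itemize}
\item A point fiber is kept as it is.
\item For a disc fiber $D(g)$, set $z=g(c')w$ with $|w|<1$. The extension gives $|w|<\lambda^{-1}$, so this fiber costs one map and one coordinate.
\item For an annulus $A(a,g)$, expand $F$ in $z$ by a Laurent series on the extended annulus $\lambda|a|<|z|<\lambda^{-1}|g|$. The Cauchy estimates, uniform in $\omega\in P^-$, bound the positive part by $CM|z|/(\lambda^{-1}|g|)$ and the negative part by $CM\lambda|a|/|z|$.
\end{itemize}
For the annulus we then split the fiber into three kinds of region.
\begin{itemize}
\item On the middle region $|a|/\epsilon'\le|z|\le\epsilon'|g|$, both parts are $O(\epsilon')$. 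There $F(c',z,\dots)$ lies within $O(\epsilon')$ of $F(c',\epsilon' g(c'),\dots)$, so we substitute the holomorphic point $z=\epsilon' g(c')$. This drops one free dimension.
\item The outer region $\epsilon'|g|\le|z|<|g|$ is covered by $O(\log(1/\epsilon'))$ annuli of fixed ratio $2$, namely $z=2^{-k}g(c')w$ with $w$ in a fixed annulus with margin.
\item The inner region $|a|<|z|\le|a|/\epsilon'$ is covered in the same way by $z=2^{k}a(c')w$.
\end{itemize}
A punctured disc is handled like the annulus with $a=0$: the outer pieces plus the middle substitution. Each free fiber therefore contributes a factor $O(1+\log(1/\epsilon))$, and we obtain the count \eqref{fi:cell:eq:productcount} with exponent $d$. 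Cells in which some new coordinate $w$ is constant are retained; they only lower $\dim v$.

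\emph{Main obstacle.} The difficulty is uniformity in the base point $c'$. For a given $k$, the piece $2^{k}a(c')w$ may leave the extended annulus when $|a/g|$ is large. Likewise, the middle region is nonempty only where $|a/g|\le\epsilon'^2$, and near that threshold the Laurent estimates are not uniform. To handle this we apply Lemma~\ref{fi:cell:lem:graphrefinement} to $h=a/g$ with $\rho=2^{-k}$ and with $\rho=\epsilon'^2$. This splits the base into extended algebraic cells of no larger dimension on which $|a/g|<\beta\rho/\lambda_1$ holds with a buffer, so each rescaled piece stays inside $C^\lambda$ with a fixed margin. Since the refinement preserves coordinate prefixes, triangularity and hence the $Q$-compatibility survive. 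Its counts are uniform in $\rho$, so combining it with the $O(\log(1/\epsilon))$ dyadic indices keeps the total number of maps within a polylogarithmic bound. The fixed margins obtained in this way give the uniform buffers and, through Cauchy estimates, the stated derivative bounds on the smaller product domains.
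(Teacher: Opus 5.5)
Your local ingredients match the paper's: normalizing disc fibers, covering the ends of an annulus over a graph-refined base, a single midpoint substitution in the middle, and triangularity for the $Q$-dependence. The induction, however, is organized so that it does not close. You process fibers in the cellular order, with the earlier coordinates $c'$ ``already handled'' when the fiber $z$ is treated. This fails in two places.

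First, the midpoint substitution $z=\epsilon' g(c')$ is legitimate only for the \emph{last} fiber. If fibers follow $z$, their radius functions depend on $z$. The Laurent estimate in $z$ holds with the remaining coordinates frozen, but the later coordinates of a given $c\in C$ need not be admissible over the substituted value. So there is no point of the new domain against which to compare $F(c,\omega)$.

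Second, the graph refinement of $h=a/g$ at a later annulus reparametrizes the base by new cells with nonconstant radii. This destroys the product charts you already built for $c'$, and those cells would have to be processed again. Neither the product-domain conclusion nor the exponent $d$ in \eqref{fi:cell:eq:productcount} then follows; indeed your final count is only ``polylogarithmic.'' Note also that the parameters $\omega$ play no role in your induction.

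The missing idea is the reason the lemma carries parameters. Peel off the \emph{last} fiber first and normalize it. On each refined base cell $B_1$, the composed map is then bounded and holomorphic on $B_1^\lambda\times D_w\times P^+$, with $D_w$ a fixed fiber domain. Now \emph{adjoin the normalized fiber variable to the passive parameters} and apply the inductive hypothesis to the base, of free dimension at most $d-1$, at accuracy $\delta/2$. The uniformity over $\omega\in P^-$ in \eqref{fi:cell:eq:passive} is exactly what makes the base approximation valid for every value of the retained fiber variable. It yields $N_d(\epsilon)\le C(1+\log(1/\epsilon))\max_{k<d}N_k(\epsilon/2)$, hence exponent $d$.

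In this order, two of your choices are acceptable alternatives to the paper's:
\begin{itemize}
\item Your midpoint $z=\epsilon' g(c')$, combined with a refinement at level $\rho=\epsilon'^2$, can replace the paper's two-cover and square root $\zeta=\sigma(\widetilde b)$; the paper's device needs no refinement.
\item Treating a punctured disc as an annulus with $a=0$ works, though it costs more than filling the puncture.
\end{itemize}
One correction is needed: pieces of fixed ratio $2$ do not fit when $\lambda$ is close to $1$. On the refined base, the extended pieces must satisfy $\lambda|a|<|z|<\lambda^{-1}|g|$, which forces ratios close to one, as in \eqref{fi:cell:eq:annulusconstants}.
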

\end{samepage}
\begin{proof}
We induct on free dimension, deleting point fibers without cost and
keeping all parameters $\omega$ throughout. Each recursive map is
obtained from $F$ by composition and removable extension, with the
same target polydisc and target dimension. Approximation errors use
the Euclidean norm.

\emph{Disc fibers.}
For a final disc $D(g(b))$, put $z=g(b)\zeta$. The available domain
contains $B^\lambda\times D(\lambda^{-1})$. Retain $\zeta$ as an additional
parameter and apply the induction to the base. If a prescribed
unit-core, doubled-disc convention is desired, cover the normalized
core disc by a fixed finite family of affine discs whose doubled
discs lie in $D(\lambda^{-1})$. This uses only the fixed extension
margin. For a punctured disc the bounded composed map extends jointly
across $\zeta=0$, by the Cauchy integral formula with parameters.
The polynomial or holomorphic identity involving $Q$ extends as well.
Thus the disc argument applies. Filling a puncture is permitted here
because the composed map takes values in the ambient polydisc;
its image need not lie in the unfilled cell.

\emph{The two ends of an annulus.}
For $A(a(b),g(b))$, normalize by $z=g(b)\zeta$ and put $h=a/g$. On the extended base, the extension convention gives
\begin{equation}\label{fi:cell:eq:annulusextension}
 0<|h|<1,\qquad \lambda|h|<|\zeta|<\lambda^{-1}.
\end{equation}
Choose fixed $e>0$, $\beta>1$ and $\lambda_1\in(\lambda,1)$ so that
\begin{equation}\label{fi:cell:eq:annulusconstants}
 1+2e<\lambda^{-1},\qquad
 \lambda\beta/\lambda_1<1-2e,\qquad \beta>1+e/4.
\end{equation}
These choices are possible by first taking $\lambda_1$ close to one
and $\beta$ close to one, and then taking $e$ sufficiently small.

Fix $0<\vartheta<1/2$. A geometric sequence of
$O(1+\log(1/\vartheta))$ radii $\rho\in[\vartheta,1]$, with a fixed ratio
close to one, and a fixed number of phases $\theta$ give affine discs
\begin{equation}\label{fi:cell:eq:affineend}
 \zeta=\rho e^{i\theta}(1+ew),\qquad |w|<2.
\end{equation}
Their subdiscs $|w|<1/4$ cover $\vartheta\le|\zeta|<1$.
For an annular point represented in such a subdisc,
$|h(b)|<\rho(1+e/4)<\beta\rho$.
Use Lemma~\ref{fi:cell:lem:graphrefinement} on this base sublevel.
On every resulting extended base,
\[
 \lambda|h|<\lambda\beta\rho/\lambda_1
 <\rho(1-2e)<|\zeta|
 <\rho(1+2e)<\lambda^{-1}.
\]
The map $F$ composed with \eqref{fi:cell:eq:affineend} is therefore
holomorphic and bounded on the \emph{entire} extended base times the
full enlarged $w$ disc and all previously retained parameter factors. Induction
applies to this map on the base, whose free dimension is at most
$d-1$.

For the inner end use $\xi=h/\zeta$ and the same affine discs for
$\vartheta\le|\xi|<1$. The reconstruction is
\[
 \zeta=\frac{h(b)}{\rho e^{i\theta}(1+ew)}.
\]
The identical base refinement works. Its bounds imply
$\lambda|h|<|\zeta|<\lambda^{-1}$ throughout the enlarged product.
Thus the inner end has the same count and the same induction.

\emph{The middle of an annulus.}
Here $|\zeta|<\vartheta$ and $|h|/|\zeta|<\vartheta$. For fixed $b,\omega$,
expand in a Laurent series.
Cauchy estimates on $|h|<|\zeta|<1$ give
\begin{gather*}
 F=c_0+\sum_{k\ge1}c_k\zeta^k+\sum_{k\ge1}c_{-k}\zeta^{-k},
 \\
 |c_{k,j}|\le M,\qquad |c_{-k,j}|\le M|h|^k
 \quad(k\ge1,\ 1\le j\le N_{\rm out}).
\end{gather*}
Thus the Euclidean coefficient norms satisfy
$|c_k|\le\sqrt{N_{\rm out}}M$ and
$|c_{-k}|\le\sqrt{N_{\rm out}}M|h|^k$.
The coefficients are holomorphic in $b,\omega$: locally choose a common
admissible circle and then glue by uniqueness. Consequently
\begin{equation}\label{fi:cell:eq:laurent}
 |F-c_0|\le \frac{2\sqrt{N_{\rm out}}M\vartheta}{1-\vartheta}
\end{equation}
in the middle, uniformly in all parameters $\omega$.

Use the cellular two-cover $R:\widetilde B\to B$ of
\cite[Definition~23 and Proposition~25]{BN19}. It squares each
punctured-disc or annular coordinate, and leaves disc and point
coordinates unchanged. Its degree is at most $2^{\dim B}$.
It extends to a fixed extension (the ratio $\lambda^{1/2}$ suffices).
Its image on fundamental groups is contained in
$2\pi_1(B)$; the same holds on the extended cells. Indeed, retracting
each disc fiber to its center and each punctured-disc or annular fiber
to a circle whose radius depends continuously on the base shows,
inductively, that a cell is homotopy equivalent to a torus with one
circle factor for each punctured-disc or annular fiber; $R$ doubles
each of these circles. Thus $(h\circ R)_*$ maps $\pi_1$ into
$2\mathbb Z$, and the
nonvanishing function $h\circ R$ has a global holomorphic square
root $\sigma$ on the extension. It is algebraic of bounded
complexity. Only the cover and its extension are taken from
\cite{BN19}. Since $0<|h\circ R|<1$ there,
\[
 |h\circ R|<|\sigma|=|h\circ R|^{1/2}<1.
\]
Hence
\[
 F_{\rm mid}(\widetilde b,\omega)
       =F(R(\widetilde b),\sigma(\widetilde b),\omega)
\]
is a bounded holomorphic map to $\Delta_M^{N_{\rm out}}$ on the \emph{whole} extended
covered base. No small-sublevel restriction of that base is needed.
At an original middle point $|h|<\vartheta^2$. Both its $\zeta$ and either
square-root midpoint satisfy \eqref{fi:cell:eq:laurent}. For any lift of its
base point,
\begin{equation}\label{fi:cell:eq:middleerror}
 |F(b,\zeta,\omega)-F_{\rm mid}(\widetilde b,\omega)|
       \le \frac{4\sqrt{N_{\rm out}}M\vartheta}{1-\vartheta},
\end{equation}
simultaneously for all $\omega$. Since $F_{\rm mid}$ is obtained
by substituting $\zeta=\sigma(\widetilde b)$ in $F$, the identity
for its composition with $Q$ is preserved.
Induct on the whole covered base.

\emph{Counting charts and retaining exact dependence.}
At a recursive node with required accuracy $\delta$, take
\[
 \vartheta=\min\left\{\frac14,
       \frac{\delta}{16\sqrt{N_{\rm out}}M}\right\}
\]
and use accuracy $\delta/2$ in its recursive problems.
Then \eqref{fi:cell:eq:middleerror} is at most $\delta/3$, so
the midpoint error plus the recursive error is less than $\delta$.
Start with $\delta=\epsilon$; at depth $j$ it is $2^{-j}\epsilon$.
The depth is at most $d$, and $N_{\rm out}$ and $M$ do not change.
Hence every annular subdivision uses
$O(1+\log(1/\epsilon))$ radii, with the fixed-data dependence
stated before the lemma. The chosen base chart and the point in its
inner domain work simultaneously for every retained parameter.
If $N_d(\epsilon)$ denotes the required chart
number in free dimension $d$, the recurrence is
\[
 N_d(\epsilon)\le C(1+\log(1/\epsilon))
           \max_{k<d}N_k(\epsilon/2),\qquad N_0=1.
\]
It proves \eqref{fi:cell:eq:productcount}. The changes of variables
have bounded algebraic complexity: their recursion depth is fixed
and all scales occur only as coefficients of fixed-format equations.
For algebraic $F$, composition gives the same conclusion for $H_j$.
Removable filling preserves its graph relation by holomorphic continuation. Output cellular
extensions can be fixed in advance. The two-cover and normalization
steps consume only fixed margins; their finite minimum is a positive
uniform product buffer. Further fixed affine subdivisions arrange
all parameter choices realizing the approximation in inner domains
with a fixed margin inside later cutoff supports. Cauchy estimates
give uniform bounds for derivatives of every fixed order in all
product coordinates, including the retained parameters, on a common
smaller enlargement.

For the dependence assertion, track through the induction the cell
coordinates and retained parameters on which the composition with $Q$
depends. Every step changes the map only by composition with explicit
holomorphic maps, so the identity $Q(F)=\mathcal Q(c',\omega_I)$ is
transported exactly, never approximately. The counted cell coordinates
stay in a prefix because the graph refinements and the
two-cover are triangular. A point fiber in the target forces its
source fiber to be a point: a monic polynomial of positive degree
cannot be constant on a free fiber when earlier coordinates are fixed.
Deleting point fibers therefore does not conceal an additional free
coordinate. Consider the last cell coordinate. If the composition does
not depend on it, extracting it as a parameter, filling a puncture,
or substituting the midpoint leaves the count unchanged. Otherwise
extraction (for discs and annular ends) moves this counted coordinate
from the cell to the parameter list, while the midpoint substitution
$\zeta=\sigma(\widetilde b)$ removes it, since $\sigma$ depends only on
base coordinates. Filling a puncture extends the identity by
continuity and adds no variable. No dependence on other retained
parameters is introduced. Thus the count stays at most $a+b$.
\end{proof}

\subsection{Proof of the product-map proposition}

\begin{proof}[Proof of Proposition~\ref{fi:cell:prop:targetproducts}]
Let $q\in\C^m$ denote the target coordinates. Consider the graph
$q=Q(z)$ in a bounded product box with the $q$ coordinates first.
Choose the $z$ core between $\Delta^-$ and $\Delta$, with its
fixed extension still in $\Delta$, and choose a $q$ polydisc
containing $Q$ of that core. Enlarge it by fixed margins, using the
bound $M$. Apply the
cellular theorem to the graph equations and retain all common zero
cells. The maps are triangular in the target-first order. Thus on
any retained cell the projection $F$ to the $z$ coordinates has $Q\circ F$
depending only on a prefix with at most $m$ free coordinates.

A cellular map has generic rank equal to its free dimension: after
omitting point fibers its triangular diagonal has nonzero one-variable
polynomial derivatives generically. Its image here lies in the
$N$-dimensional graph, so every retained cell has free dimension at
most $N$. The number and algebraic complexity of the cells are
uniformly bounded, and their extensions are fixed. Apply
Lemma~\ref{fi:cell:lem:products} to their algebraic projections
to the $z$ coordinates. Every $z\in\Delta^-$ is the projection of a
point in one of the retained cell cores. For that point,
\eqref{fi:cell:eq:passive} supplies a chart and inner-core parameters
with Euclidean error at most $\epsilon$. The same lemma gives
$Q\circ f_j=q_j(b)$, with $\dim D_{b,j}\le m$ and
$\dim(D_{b,j}\times D_{V,j})\le N$. The margins
in the $z$ polydiscs and source products give the enlarged-domain
bounds. The construction uses a bounded number of algebraic graph
refinements, covers, square roots and compositions along any one
chart. Only the number of charts depends logarithmically on
$\epsilon^{-1}$. Thus the graph complexity of $f_j$ is uniform.
For any fixed $V_*\in D_{V,j}$,
$q_j(b)=Q(f_j(b,V_*))$; composition and constant-node restriction
give the same algebraic bound for $q_j$. These closure operations,
including exceptional choices of $V_*$, are justified in
Lemma~\ref{fi:it:lem:complexity}.
\end{proof}

\section{Finite jets and rescaling}\label{fi:it:sec:invariant}

We expand the weight in an auxiliary parameter $\tau$. Each rescaling
removes a pluriharmonic term and divides the remaining expansion by
$\tau^2$. The rank condition identifies the new leading coefficient;
a bounded factorization then allows the step to be repeated.

\paragraph{Scales and uniform bounds.}\label{fi:it:par:scale-conventions}
Fix the depth $k$. The actual scale is $t=p^{-1/(2k+5)}$.
The superscript $t$ records all choices made at this scale, including
centers, charts, branches and grid labels. These choices stay fixed
when the independent parameter $\tau$ tends to zero. Quantitative
bounds hold for $0<\tau<\tau_k$, with constants and $\tau_k>0$
depending only on $k$, the dimension, the prescribed upper derivative
and complexity bounds, and fixed product margins. They are uniform
in $p$, charts, centers, branches, factorization choices and levels
$0\le\ell\le k$;
the bounds at successive levels are supplied by the finite induction
below. The qualitative rank condition may hold only for
$\tau<\tau_{\rm rk}$, where $\tau_{\rm rk}$ depends on those choices.
It is used to identify coefficients by taking $\tau\to0$; the resulting
identities and quantitative bounds are then evaluated at $\tau=t$.
No comparison of $t$ with $\tau_{\rm rk}$ is required. None of the
uniform bounds uses a positive lower bound on the nonzero curvature
eigenvalues, the singular values of $C_t$, or the radius of a
constant-rank neighborhood.

\begin{definition}[Finite-jet families]\label{fi:it:def:class}
Fix $m\le N$, $0\le\nu\le k$, and nested product polydiscs with positive
margins in $z=(u,v)\in\C^m\times\C^{N-m}$.
A real family $F^t_\tau$ belongs to $\cC_\nu$ if
\begin{equation}\label{fi:it:eq:class}
 F^t_\tau(z)=\sum_{a=0}^{2\nu}\tau^a K^t_a(z,\bar z)
                  +\tau^{2\nu+1}R^t_\tau(z),
\end{equation}
with the following properties:
\begin{enumerate}
\item Each $K^t_a$ has a global Hermitian algebraic polarization,
bounded on a larger product in both variables and of bounded complexity.
\item $R^t_\tau$ has a uniform ambient $C^2$ bound on the working
product; no $\tau$ derivatives are required.
\item The leading coefficient is
\begin{equation}\label{fi:it:eq:leading}
 K^t_0=Q_t^*\Sigma_tQ_t,\qquad
 Q_t(u,v)=C_tu+G_t(v)\in\C^m,
\end{equation}
where $Q_t$ is bounded, holomorphic and algebraic of bounded
complexity, $\Sigma_t$ is an
$m\times m$ signature matrix, and $C_t$ is a bounded invertible
$m\times m$ matrix. For $m=0$, $K^t_0=0$ and $Q_t$ has no components.
\item For each frozen choice of data, some $\tau_{\rm rk}>0$ satisfies
\[
 \rk\Levi_z F^t_\tau\le m
 \quad\text{on the entire working product},\qquad
 0<\tau<\tau_{\rm rk}.
\]
\end{enumerate}
\end{definition}

The bounds in (1)--(3) use the scale convention above. Condition (4)
need not hold on the actual working box at $\tau=t$.

Algebraic complexity has the graph meaning of
\cite[Section~2.2.3, p.~162]{BN19}: on its full domain a scalar
holomorphic graph is an analytic component of a bounded-degree
algebraic variety restricted to that domain. Map complexity is the
maximum of its component complexities. This concerns degrees, not
coefficient heights or lower bounds.

\begin{samepage}
\begin{lemma}[Algebraic complexity]
\label{fi:it:lem:complexity}
In fixed dimensions, a fixed number of sums, products, holomorphic
quotients, compositions, derivatives and constant-node restrictions
of algebraic holomorphic branches preserves bounded graph complexity,
independently of all scalar constants used in these operations.
\end{lemma}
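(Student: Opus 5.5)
We would prove Lemma~\ref{fi:it:lem:complexity} by reducing every operation to elimination on a bounded-degree algebraic variety. Recall that a scalar algebraic holomorphic branch $f$ on a domain $U\subset\C^N$ has complexity $\le D$ if its graph $\Gamma_f\subset U\times\C$ is an analytic component of $V\cap(U\times\C)$ for some algebraic variety $V\subset\C^{N+1}$ of degree $\le D$. The degree of a variety does not depend on the values of the coefficients in its defining equations, so the bookkeeping only has to track degrees and numbers of variables, both fixed by the dimensions and the number of operations. Because only a fixed number of operations is applied, it is enough to show that each single operation takes complexity $\le D$ to complexity $\le D'(D,N)$. Induction then gives a bound depending only on the dimensions, the number of steps and the initial complexities.

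\textbf{Binary operations.} For $f\pm g$, $fg$ and $f/g$ (on a domain where $g\ne0$), and for $c f$ with a scalar $c$, we form the incidence variety
\[
 W=\{(z,a,b,w):(z,a)\in V_f,\ (z,b)\in V_g,\ w=\Phi(a,b)\},
\]
where $\Phi(a,b)$ is $a\pm b$, $ab$, $cb\cdot$, or $wb=a$ for the quotient. By B\'ezout, $\deg W\le D_fD_g\cdot 2$. The graph of the result is the image of an analytic component of $W\cap(U\times\C^3)$ under the linear projection to $(z,w)$. This component lies over the graph of $(f,g)$ and is mapped isomorphically onto the graph. Projections do not increase the degree of the Zariski closure of an irreducible component (closure of a linear image of a variety of degree $d$ has degree $\le d$). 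So the new graph is an analytic component of an algebraic variety of degree $\le 2D_fD_g$. The subtle point is that the Zariski closure of the image is irreducible of the right dimension, so the graph is a full analytic component rather than a proper piece. This holds because the graph has dimension $N$ and sits inside an $N$-dimensional closure.

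\textbf{Composition and constant-node restriction.} Composition $f\circ G$ with $G=(g_1,\ldots,g_N)$ is handled the same way, with incidence conditions $(y,g_j)$-graphs and $(g,w)\in V_f$, and degree bounded by $D_f\prod D_{g_j}$. Restriction to a constant node $z''=c$ slices $V_f$ by affine hyperplanes, which does not increase the degree. The one thing to check is that the restricted graph is a whole component of the slice. If the slice is exceptional, meaning the graph lies in a component of higher dimension, we instead take the component of the slice containing the graph of $f|_{z''=c}$. This is an analytic set of the correct dimension, so it is a union of components of $V\cap\{z''=c\}$, and its degree is still bounded by Bézout. This is where the exceptional $V_*$ of Proposition~\ref{fi:cell:prop:targetproducts} are covered.

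\textbf{Derivatives (the main obstacle).} Derivatives are the hardest case. If $P(z,w)$ is a defining polynomial with $\partial_wP\not\equiv0$ on the graph, implicit differentiation gives $\partial_{z_j}f=-\partial_{z_j}P/\partial_wP$ there, so $\partial_{z_j}f$ is a quotient of compositions and falls under the previous cases. The difficulty is choosing $P$ of bounded degree that is reduced along the graph. For this we take a generic projection of the component to a hypersurface in $\C^{N+1}$ of degree $\le D$, and let $P$ be its square-free defining polynomial. Then $\partial_wP$ vanishes only on a proper analytic subset of the graph, and the quotient identity extends across it by holomorphy of $\partial_{z_j}f$ and the identity theorem. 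Mixed-variable and map-valued cases follow componentwise, using the maximum of the component complexities.
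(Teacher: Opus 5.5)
\textbf{Gap: the exceptional constant-node restriction.} Your treatment of sums, products, quotients and derivatives follows the paper's route: fiber products with the operation equations, B\'ezout, projection, and implicit differentiation with a reduced equation extended across $\{P_w=0\}$. The exceptional constant-node restriction, however, is not actually handled. In that case the restricted graph $\Gamma'$ of $f|_{z''=c}$ lies in a component $Y$ of $V\cap\{z''=c\}$ with $\dim Y>\dim\Gamma'$. Then $Y$ is not ``of the correct dimension.'' Nor is $\Gamma'$ a union of components of the slice, since it is a proper analytic subset of $Y$. Refined B\'ezout bounds the degree of the slice, but it does not produce a bounded-degree variety having $\Gamma'$ as a component.

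This case genuinely occurs, even for a minimal equation, when several coordinates are frozen at once. The polynomial $P=z_1(w-z_3)^2+2z_2(w-z_3)-z_2^2$ is irreducible. It has the holomorphic root $w=z_3+z_2(\sqrt{1+z_1}-1)/z_1$ on $\{|z_1|<1\}$, and it vanishes identically on $\{z_1=z_2=0\}$, where the restricted graph is the line $w=z_3$.

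The same excess dimension breaks your composition step whenever $G$ maps into the locus where $V_f$ has vertical fibres; restriction is the special case $G(z')=(z',c)$. So the claim that ``the graph sits inside an $N$-dimensional closure'' fails there as well.

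\textbf{The missing idea.} You must manufacture a new relation rather than select a component of the old slice. The paper does this by a limit. For nearby generic nodes $c'\to c$, the specialized polynomials $P(\cdot,c',\cdot)$ are nonzero of degree at most $D$. Normalize their coefficient vectors to norm one and pass to a convergent subsequence. The nonzero limit vanishes on $\Gamma'$, because the restricted branches converge locally uniformly. It also depends on $w$, since otherwise it would vanish on an open base. Hence $\Gamma'$ is a component of a hypersurface of degree at most $D$; the same trick repairs composition.

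A more algebraic alternative also works for restriction. Replace $V$ by the irreducible hypersurface $\{P=0\}$ that is the Zariski closure of the graph. Then freeze one coordinate at a time, passing each time to the irreducible factors containing the restricted graph. An irreducible polynomial whose zero set contains a graph over an open set is never divisible by $z_i-c_i$, so each single slice is nondegenerate. The example above shows why freezing all coordinates simultaneously does not suffice.
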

\end{samepage}
\begin{proof}
\emph{Algebraic operations.} Keep graph presentations on the specified
global branches. For sums, products, quotients and compositions, take their fiber
products with the polynomial equations of the relevant operations,
select the component containing the actual holomorphic graph, and
project away auxiliary variables. B\'ezout and algebraic projection
bound its degree in terms of the old bounds and fixed dimensions.
This also gives the composition bound used in
\cite[equation~(31)]{BN19}.

\emph{Derivatives.} Choose an irreducible minimal equation
$P(z,w)=0$ of a scalar branch. In characteristic zero $P_w$ is
generically nonzero. Adjoin
\[
 P_w(z,w)v+P_{z_i}(z,w)=0
\]
on that dense locus. Algebraic closure and projection give a
bounded-degree derivative relation. The actual derivative is
holomorphic on the whole larger domain, so its graph continues as
the chosen analytic branch across the exceptional locus. Repeat
for each fixed-order derivative. Analytic norm bounds come from
Cauchy estimates, without estimating $1/P_w$.

\emph{Exceptional restrictions.} For possibly exceptional frozen nodes, take nearby
generic constant-node slices of a degree-bounded polynomial graph
relation. Their nonzero specialized relations have the same degree
bound. Normalize their finite coefficient vectors to norm one and
pass to a convergent subsequence. The restricted holomorphic
branches converge locally uniformly, so the limiting nonzero
polynomial vanishes on the desired restricted graph. It cannot be
independent of the value variable, since it would then vanish on
an open base. The graph is an analytic component of that
bounded-degree zero set. In zero base dimension a constant graph
has degree one. Thus direct specialization need not stay nonzero;
the limiting argument supplies the required relation even at an
exceptional node.

This also applies to the finitely many constant-node evaluations
and linear combinations in Fekete interpolation. Complexity controls
degree; the separate analytic bounds below come from Taylor
coefficients, Fekete interpolation and Cauchy estimates.
\end{proof}

\begin{samepage}
\begin{lemma}[Initial finite jet]\label{fi:it:lem:initial}
Let $\phi$ be a real local weight with bounded $C^{2k+5}$ norm,
$k\ge0$, whose Levi rank is constant and equal to $m$ near $0$.
Choose unitary coordinates $(x,y)\in\C^m\times\C^{n-m}$ with
$y$ in the Levi kernel at $0$, and fix a bounded $\alpha\in\C^m$.
There is a holomorphic polynomial $g^0_{\tau,\alpha}$ such that
\begin{align}
 S_{\tau,\alpha}(u,v)&=(\tau\alpha+\tau^2u,\tau v),\notag\\
 \phi(S_{\tau,\alpha}(u,v))-2\Rea g^0_{\tau,\alpha}(u,v)
       &=\tau^4F^t_\tau(u,v),\qquad F^t_\tau\in\cC_k.
       \label{fi:it:eq:initial}
\end{align}
If $m=0$, one can choose $F^t_\tau=\tau^{2k+1}R^t_\tau$ with
$\|R^t_\tau\|_{C^2}\le C_k$.
\end{lemma}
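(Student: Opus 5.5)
We would prove Lemma~\ref{fi:it:lem:initial} by running the Taylor argument of Lemma~\ref{fi:oa:parabolic} to higher order, with $\tau$ in place of $t$. Set $s=2k+4$ and expand the real Taylor polynomial $T_s\phi$ at the origin. Its pullback splits as
\[
 (T_s\phi)\circ S_{\tau,\alpha}=\sum_{j=0}^{2s}\tau^jP_{j,\alpha}(u,v).
\]
Each $P_{j,\alpha}$ is a real polynomial in $(u,v)$ whose coefficients are Taylor coefficients of $\phi$ combined polynomially with the bounded $\alpha$, so they are uniformly bounded. The Taylor remainder $\phi-T_s\phi$ is $O(|z|^{s+1})$ with bounded $C^{2k+5}$ data. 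Since $|S_{\tau,\alpha}|\lesssim\tau$ on the normalized box, its pullback is $O(\tau^{2k+5})$ in sup norm. Every real derivative in $(u,v)$ costs at most a factor $\tau$, so the remainder is also $O(\tau^{2k+5})$ in $C^2$ once we use the $C^2$ bound of $D^{s+1}$-type remainders (Taylor's formula applied to the second derivatives of $\phi$). Terms with $j\ge 2k+5$ are absorbed into the same bound. This gives the remainder $\tau^{4+2k+1}R^t_\tau$ required by property~(2) of Definition~\ref{fi:it:def:class}.

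For $j\le 2k+4$, split each $P_{j,\alpha}$ into its pure part $2\Rea h_{j,\alpha}$ and its mixed part. Define $g^0_{\tau,\alpha}=\sum_j\tau^jh_{j,\alpha}$. Step~2 of the proof of Lemma~\ref{fi:oa:parabolic} shows that the Levi matrices of $P_{j,\alpha}$ vanish for $j<4$, so these $P_{j,\alpha}$ are entirely pure and disappear. The coefficients $K^t_a$, $0\le a\le 2k$, are the mixed parts of $P_{4+a,\alpha}$. They are polynomials of bounded degree whose polarizations are explicit and bounded on any larger product, and polynomials trivially have bounded graph complexity, which settles property~(1). Property~(3) is Step~3 of Lemma~\ref{fi:oa:parabolic}: the mixed part of $P_{4,\alpha}$ is $W_\alpha^*A_0W_\alpha$. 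Lemma~\ref{fi:new:lem:fekete} then factors it as $Q_t^*\Sigma_tQ_t$ with $Q_t=C_tu+G_t(v)$ bounded, polynomial, and with $C_t$ invertible. The only change from that lemma is that its $C^5$ hypothesis is now supplied by the $C^{2k+5}$ bound.

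Property~(4) is where the constant-rank hypothesis enters. For $\tau$ small enough, $S_{\tau,\alpha}$ maps the working product into the constant-rank neighbourhood, so $\rk\Levi(\phi\circ S_{\tau,\alpha})\le m$ by \eqref{eq:pullback-rank}. Subtracting $2\Rea g^0$ leaves the Levi form unchanged, and dividing by $\tau^4$ does not change rank. Hence $\rk\Levi F^t_\tau\le m$ for $\tau<\tau_{\rm rk}$, where $\tau_{\rm rk}$ depends on the germ; Definition~\ref{fi:it:def:class} allows exactly this dependence. For $m=0$ the Levi form vanishes identically near $0$, so every Taylor coefficient of $\phi$ there with a mixed monomial is zero. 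All $P_{j,\alpha}$ are then pure, and $F^t_\tau$ reduces to $\tau^{2k+1}R^t_\tau$ with the $C^2$ bound above.

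The main obstacle is the uniformity claim. The constants must not depend on $\tau_{\rm rk}$ or on the smallest nonzero eigenvalue of $A_0$. The bound on $Q_t$ must come from the polarization bound through Lemma~\ref{fi:new:lem:fekete}, never from $A_0^{-1}$. We would also check carefully that the $C^2$ remainder estimate uses only the ambient $C^{2k+5}$ bound on a fixed neighbourhood and not the size of the constant-rank germ.
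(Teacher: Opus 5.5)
Your proposal is correct and takes essentially the same route as the paper. You pull back $T_{2k+4}\phi$ by $S_{\tau,\alpha}$, remove the orders below four and identify the leading term via Lemma~\ref{fi:oa:parabolic} and Lemma~\ref{fi:new:lem:fekete} (not via $A_0^{-1}$), bound the $C^2$ remainder with the chain rule using $\|DS_{\tau,\alpha}\|\le C\tau$ and $D^2S_{\tau,\alpha}=0$, get the rank condition qualitatively from the image lying in the constant-rank germ, and for $m=0$ note that $T_{2k+4}\phi$ is pluriharmonic and can be removed entirely.
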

\end{samepage}
\begin{proof}
For $m>0$, Lemma~\ref{fi:oa:parabolic} removes the parabolic orders
below four and the pure part at order four. The remaining leading
term is $Q^*\Sigma Q$, where the $m$ components of $Q=Cu+G(v)$ are
bounded and $C$ is invertible.
Substitute $S_{\tau,\alpha}$ in $T_{2k+4}\phi$ and collect powers
through $\tau^{2k+4}$. After division by $\tau^4$ these give the
coefficients $K_0,\ldots,K_{2k}$; their polarizations are polynomials
with bounded coefficients and degree.

The Taylor estimate is in all the original coordinates $(x,y)$:
\begin{equation}\label{fi:it:eq:physical-remainder}
 \bigl|D^j(\phi-T_{2k+4}\phi)(x,y)\bigr|
       \le C_k |(x,y)|^{2k+5-j},\qquad 0\le j\le2.
\end{equation}
Since $|S_{\tau,\alpha}|\le C\tau$, $\|DS_{\tau,\alpha}\|\le C\tau$
and $D^2S_{\tau,\alpha}=0$, the chain rule gives
\[
 \left\|\tau^{-4}(\phi-T_{2k+4}\phi)
                    \circ S_{\tau,\alpha}\right\|_{C^2}
 \le C_k\tau^{2k+1}.
\]
Every discarded polynomial term has parabolic order at least
$2k+5$ and satisfies the same bound. This proves the uniform
remainder assertion. For each frozen center and label, the image
of $S_{\tau,\alpha}$ lies in the rank-$m$ germ when $\tau$ is
sufficiently small, proving the qualitative rank condition in
Definition~\ref{fi:it:def:class}.

If $m=0$, every derivative of $\partial\bar\partial\phi$ used in
$T_{2k+4}\phi$ vanishes at the center. Thus $T_{2k+4}\phi$ is pluriharmonic. Removing it entirely
and using \eqref{fi:it:eq:physical-remainder} with $S_\tau(v)=\tau v$
proves the last assertion and the same qualitative rank condition.

The \hyperref[fi:it:par:scale-conventions]{scale convention}
separates this qualitative threshold from the quantitative bounds.
\end{proof}

\subsection{Rescaling}
\label{fi:it:sec:renormalize}

Proposition~\ref{fi:cell:prop:targetproducts} supplies algebraic maps
$f_t(b,V)$ with $Q_t\circ f_t=q_t(b)$ and $\dim b\le m$.
Only $b$ will be gridded. A displacement in the old $u$-directions
preserves an invertible block in the next leading coefficient,
even if $Dq_t$ is degenerate.

Fix nested products $D_b^-\Subset D_b^+$ and extended products
$D_V^+,D_U^+,D_W^+$, with working products
$D_V\Subset D_V^+$, $D_U\Subset D_U^+$ and $D_W\Subset D_W^+$.
Their dimensions are respectively
$m_j,N_f-m_j,m_j,m$, with $m_j\le m$ and $N_f\le N$.
Take $D_U^+$ and $D_W^+$ centered at zero. Choose $0<\tau_k\le1$ so that
$b_*+\tau D_U^+\subset D_b^+$ for every $b_*\in D_b^-$ and
$0<\tau<\tau_k$. All products have the fixed margins used in the
scale convention above.

\begin{samepage}
\begin{lemma}[Rescaling]\label{fi:it:lem:renormalize}
Let $F^t_\tau\in\cC_\nu$, $\nu\ge1$, and let
$f_t:D_b^+\times D_V^+\to\C^N$ be an algebraic holomorphic map
of uniformly bounded complexity satisfying $Q_t\circ f_t=q_t(b)$.
Suppose, for a fixed $c>0$, the maps
$f_t(b,V)+c(w,0)$ take $D_b^+\times D_V^+\times D_W^+$
into the old working product with a fixed margin.
For each $b_*\in D_b^-$, set
\begin{equation}\label{fi:it:eq:Psi}
 \Psi^t_\tau(U,V,W)=f_t(b_*+\tau U,V)+c\tau(W,0).
\end{equation}
On $D_U\times D_V\times D_W$, there is a holomorphic correction
$g^t_\tau$ such that
\begin{equation}\label{fi:it:eq:exactnew}
 F^{t,\mathrm{new}}_\tau
   =\frac{F^t_\tau\circ\Psi^t_\tau-2\Rea g^t_\tau}{\tau^2}
   \in\cC_{\nu-1}.
\end{equation}
The new family has $N_f+m\le N+m$ variables and satisfies the
uniform bounds in the scale convention above.
\end{lemma}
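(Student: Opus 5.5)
The plan is to expand $F^t_\tau\circ\Psi^t_\tau$ in powers of $\tau$, remove its pluriharmonic part, and check that what remains is $\tau^2$ times a member of $\cC_{\nu-1}$. The old remainder is harmless. Since $\Psi^t_\tau$ is holomorphic with uniformly bounded derivatives on the enlarged product and maps into the old working product with a fixed margin, $\tau^{2\nu+1}R^t_\tau\circ\Psi^t_\tau$ has a uniform $C^2$ bound. After division by $\tau^2$ it contributes $\tau^{2(\nu-1)+1}$ times a $C^2$-bounded function, which is exactly the remainder order required in $\cC_{\nu-1}$. For the polynomial part, I would polarize each $K^t_a$ and substitute $\Psi^t_\tau(U,V,W)$ in the holomorphic slot and $\Psi^t_\tau(U',V',W')$ in the antiholomorphic slot. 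This gives kernels that are analytic in $\tau$, since $b_*+\tau D_U^+\subset D_b^+$ and the polarizations are bounded on a larger product. Cauchy estimates in $\tau$ on a fixed disc then give a Taylor expansion through order $2\nu$ with bounded coefficient kernels. They also give a tail of order $\tau^{2\nu+1}$ that is bounded in $C^2$ on the diagonal. Each coefficient kernel is a finite combination of $\tau$-derivatives at $\tau=0$ of compositions of algebraic branches, so Lemma~\ref{fi:it:lem:complexity} gives bounded complexity. I then apply the mixed projection $\mathcal P$ of \eqref{fi:it:eq:mixed-projection} order by order and collect the removed pure parts, together with the finitely many pure Taylor terms, into $g^t_\tau$.

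The structural input is the identity $Q_t\circ f_t=q_t(b)$ together with $Q_t(u,v)=C_tu+G_t(v)$. These give
\[
 Q_t\circ\Psi^t_\tau=q_t(b_*+\tau U)+c\tau C_tW
 =q_t(b_*)+\tau\bigl(Dq_t(b_*)U+cC_tW\bigr)+O(\tau^2),
\]
which is holomorphic and independent of $V$. Hence $K^t_0\circ\Psi^t_\tau$ has pure orders $\tau^0,\tau^1$, and its $\tau^2$ mixed part is $Y_0^*\Sigma_tY_0$ with $Y_0=Dq_t(b_*)U+cC_tW$; it never couples to $V$ below order $\tau^2$. The other low-order mixed contributions come from $\tau K^t_1$, $\tau^2K^t_2$ and so on, evaluated at $f_t(b_*,V)$ and their first $\tau$-derivatives. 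To control them I use the qualitative rank condition. By \eqref{eq:pullback-rank}, $\rk\Levi(F^t_\tau\circ\Psi^t_\tau)\le m$ for $\tau<\tau_{\rm rk}$. Moreover, the $W$-block of this Levi matrix is $\tau^2c^2C_t^*\Sigma_tC_t+O(\tau^3)$, which is invertible of rank exactly $m$ for frozen data. The Schur complement of this block must therefore vanish identically for small $\tau$. Comparing powers of $\tau$, exactly as in Step~1 of Lemma~\ref{fi:oa:parabolic}, should give three conclusions. First, the mixed parts at orders $\tau^0$ and $\tau^1$ vanish. Second, the $(U,V)$-$W$ cross terms at order $\tau^2$ are holomorphic-affine, with closed rows that admit holomorphic primitives. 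Third, the order-$\tau^2$ mixed kernel equals $Y^*\Sigma_tY$ qualitatively, where
\[
 Y=cC_tW+Dq_t(b_*)U+\widehat G(U,V)
\]
and $\widehat G$ is holomorphic.

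This qualitative formula may involve $C_t^{-1}$, so I use it only to show that the section space of the order-$\tau^2$ mixed kernel has dimension exactly $m$. It has at most $m$ because of the formula, and at least $m$ because of the invertible $W$-block. Lemma~\ref{fi:new:lem:fekete} then produces a bounded algebraic column $Q^{\rm new}_t$ and a signature matrix $\Sigma^{\rm new}_t$. Its components are linear combinations of kernel sections, which are affine in $W$, so $Q^{\rm new}_t=C^{\rm new}_tW+G^{\rm new}_t(U,V)$ with $(C^{\rm new}_t)^*\Sigma^{\rm new}_tC^{\rm new}_t=c^2C_t^*\Sigma_tC_t$, forcing $C^{\rm new}_t$ to be invertible. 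The new variable count is $m_j+(N_f-m_j)+m=N_f+m$. The new rank condition (4) follows because dividing by $\tau^2$ and subtracting $2\Rea g^t_\tau$ does not change Levi rank. The new coefficients are the old $\tau^{a+2}$ mixed coefficients for $0\le a\le2\nu-2$, which gives conditions (1)--(3).

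I expect the main obstacle to be the vanishing and identification step at orders $\tau^0,\tau^1,\tau^2$. The Schur-complement argument must run without any lower bound on $C_t$ or on curvature eigenvalues. The plan is to perform all inversions only for frozen data in the limit $\tau\to0$, and to take every quantitative bound from the kernel sup norms via Lemma~\ref{fi:new:lem:fekete}, never from the qualitative formula.
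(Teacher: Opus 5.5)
Your proposal is correct. Its outer frame is the paper's: a polarized $\tau$-expansion with Cauchy estimates, mixed projections collected into $g^t_\tau$, a qualitative kernel identity used only to fix the section-space dimension, then Lemma~\ref{fi:new:lem:fekete} and the congruence for $C_{\rm new}$. The genuine difference is how you use the rank condition (4).

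\textbf{How the paper uses (4).} It uses the condition twice.
\begin{itemize}
\item First, it works on the old family in the local coordinates $(Q_t,v)$. There the Schur complement of the $\Sigma_t$ block gives the exact, $\tau$-independent fact that $K^t_1$ is pluriharmonic on every $Q_t$-fiber.
\item Composing with the unscaled map $\widetilde f_t(b,V,w)=f_t(b,V)+c(w,0)$ then makes $A_0,A_1$ pluriharmonic. It also puts the mixed part of $A_2$ in the explicit form \eqref{fi:it:eq:model}.
\item Only afterwards is the rank bound applied to the $C^2$ limit $M$ of the new family, giving $\Levi K=(D_\eta H)^*A_W^{-1}D_\eta H$.
\end{itemize}

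\textbf{How you use it.} You take a single Schur complement with respect to the $W$-block of $\Levi(F^t_\tau\circ\Psi^t_\tau)$. This does close, as follows.
\begin{itemize}
\item The identity $Q_t\circ f_t=q_t(b)$ kills the $\tau^1$ coefficient of the $(U,V)$--$W$ cross block.
\item The $W$-linearity of $\Psi^t_\tau$ makes the $W\bar W$ part of $A_2$ equal to the constant $A_W=c^2C_t^*\Sigma_tC_t$. It also makes $B_2=\partial_{\bar W}\partial_\eta A_2$ independent of $W$.
\item Hence order $\tau$ gives $\Levi_\eta A_1=0$, and order $\tau^2$ gives $\Levi_\eta A_2=B_2^*A_W^{-1}B_2$.
\item From this, $\partial_{\bar\eta}B_2=\partial_{\bar W}\Levi_\eta A_2=0$, and the symmetry of $\partial_\eta B_2$ gives holomorphic closed rows.
\item The pluriharmonicity in $V$ of the $w$-derivatives of $K^t_1\circ\widetilde f_t$, which the paper proves separately, then comes out automatically.
\end{itemize}

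\textbf{Comparison.} Your route is shorter: it needs no $(Q_t,v)$ chart and no base-point splitting of derivatives. The paper's route isolates an exact identity for the old coefficient and exhibits the structure of $M$ before any limit is taken. You should state the structural facts above explicitly, since ``comparing powers of $\tau$'' relies on them.

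\textbf{Two small corrections.}
\begin{itemize}
\item The cross terms are holomorphic but not affine in general. They contain $V$-derivatives of $\partial_{\bar w}(K^t_1\circ\widetilde f_t)(b_*,V,0)$. This is harmless, because affineness in $(U,V)$ is never used.
\item $Y$ must be normalized to vanish at the base point $(0,\eta_*)$ of the mixed projection. Otherwise the mixed kernel is $(Y-Y_*)^*\Sigma_t(Y-Y_*)$ rather than $Y^*\Sigma_tY$, where $Y_*=Y(0,\eta_*)$.
\end{itemize}
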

\end{samepage}
\begin{proof}
\emph{Step 1: exact pullback and finite expansion.}
Freeze the chart, center, branches, coefficients and domains.
In \eqref{fi:it:eq:Psi}, the displacement uses the old $u$ slots.
Thus
\begin{equation}\label{fi:it:eq:targetPsi}
 Q_t(\Psi^t_\tau)=q_t(b_*+\tau U)+c\tau C_tW,
\end{equation}
independently of $V$. 

For each retained polarization, expand
$K^t_a(\Psi^t_\tau(z),\overline{\Psi^t_\tau(\zeta)})$ through the
required finite order. For these coefficient functions, extend
the rescaling parameter to independent complex variables in the
holomorphic and antiholomorphic positions. The fixed buffers give
a common positive complex radius, and Cauchy estimates give bounded
polarized Taylor coefficients and $C^2$ remainders on a smaller
product. These coefficients are finite combinations of derivatives
and restrictions of algebraic functions, so their complexity stays
bounded. Cauchy estimates on the buffered products also give
$\|\Psi^t_\tau\|_{C^2}\le C_k$, with derivatives in all
$(U,V,W)$ variables. Hence the original remainder satisfies
\[
 \|R^t_\tau\circ\Psi^t_\tau\|_{C^2(U,V,W)}
       \le C_k\|R^t_\tau\|_{C^2}.
\]
Consequently
\begin{equation}\label{fi:it:eq:expanded}
 F^t_\tau\circ\Psi^t_\tau
   =\sum_{a=0}^{2\nu}\tau^a A_a
       +\tau^{2\nu+1}\widetilde R^t_\tau,
       \qquad \|\widetilde R^t_\tau\|_{C^2}\le C.
\end{equation}
No $\tau$ derivative of the old remainder has been used.

\emph{Step 2: removal of the first two orders.}
We determine the first three coefficients without quantitative changes
of coordinates. Since $C_t$ is invertible, $(Q_t,v)$ are local
holomorphic coordinates near every point of the working product; no
norm in them is retained. In these coordinates the
leading Levi matrix is $\Sigma_t\oplus0$. At a fixed point write
\[
 \Levi F^t_\tau=
 \begin{pmatrix}
 \Sigma_t+O(\tau)&\tau B_1+O(\tau^2)\\
 \tau B_1^*+O(\tau^2)&\tau D_1+O(\tau^2)
 \end{pmatrix}.
\]
The bounds here may depend on the frozen datum. For sufficiently
small $\tau<\tau_{\rm rk}$, the upper-left block is invertible and
the rank bound (4) makes its Schur complement zero. The product of
the off-diagonal blocks contributes $O(\tau^2)$, so division by
$\tau$ and passage to the limit give $D_1=0$, that is,
\[
 \partial_v\bar\partial_v K^t_1\big|_{Q_t=\mathrm{constant}}=0
\]
at every point of the working product. Thus $K^t_1$ is pluriharmonic
on every $Q_t$ fiber. This is an exact identity for the
$\tau$-independent coefficient $K^t_1$, and it is the only use of (4)
in this step.

Consider also the unscaled map
\[
 \widetilde f_t(b,V,w)=f_t(b,V)+c(w,0).
\]
Its leading target is $q_t(b)+cC_tw$, independent of $V$. Hence
$K^t_1\circ\widetilde f_t$ is pluriharmonic in $V$ for every $(b,w)$,
as are its first holomorphic and antiholomorphic derivatives in
$(b,w)$. This assertion holds on the whole buffered product. Choose a base point
$V_*\in D_V$. For any globally polarized pluriharmonic coefficient $R$, its global splitting
is given by the identity
\[
 R(V,\bar V')=R(V,\bar V_*)+R(V_*,\bar V')-R(V_*,\bar V_*).
\]
This formula also splits the first derivatives in $(b,w)$, with bounds
from the same polarizations.

By \eqref{fi:it:eq:targetPsi}, the leading square has a constant
order-zero coefficient and a pluriharmonic order-one coefficient.
The order-one contribution of $\tau K^t_1$ is pluriharmonic in $V$.
Thus $A_0,A_1$ in \eqref{fi:it:eq:expanded} are pluriharmonic.
For the mixed part of $A_2$, separate $W\in\C^m$ from
$\eta=(U,V)\in\C^{N_f}$ and use the base point $\eta_*=(0,V_*)$.
The leading square is affine in $W$ before squaring; the first
derivatives of $K^t_1\circ\widetilde f_t$ in $(b,w)$ split as above, and the $K^t_2$
contribution is independent of $W$. The mixed polarization is
therefore
\begin{equation}\label{fi:it:eq:model}
 \begin{split}
 M(W,\eta;\bar W',\bar\eta')
  ={}&W'^*A_W W+W'^*H(\eta)+H(\eta')^*W+K(\eta,\bar\eta'),
 \qquad A_W=c^2C_t^*\Sigma_tC_t.
 \end{split}
\end{equation}
Primes denote independent copies. Here $A_W$ is an invertible
$m\times m$ Hermitian matrix,
$H$ is a holomorphic column of length $m$, and $K$ is a scalar
Hermitian kernel. Subtraction of the pure terms at the base point gives $H(\eta_*)=0$
and makes $K$ vanish when either argument is $\eta_*$. All these
kernels have bounded polarizations and algebraic complexity on
the larger products: only bounded-order derivatives and base-point
evaluations of the original polarizations were used.

Apply \eqref{fi:it:eq:mixed-projection} to $A_0,A_1,A_2$ at
$(W,\eta)=(0,\eta_*)$. It removes the pluriharmonic coefficients
$A_0,A_1$ and the pure part of $A_2$.
Thus the correction $g^t_\tau=g_0+\tau g_1+\tau^2g_2$ is holomorphic,
bounded in its coefficient functions, and algebraic in the source.
The exact quotient \eqref{fi:it:eq:exactnew} therefore satisfies
\begin{equation}\label{fi:it:eq:newexpansion}
 F^{t,\mathrm{new}}_\tau
   =M+\sum_{a=1}^{2\nu-2}\tau^a A_{a+2}
          +\tau^{2\nu-1}\widetilde R^t_\tau.
\end{equation}
The remainder is the composed original remainder together with the
finite Taylor remainders of Step~1.

\emph{Step 3: exact rank of the new leading kernel.}
For fixed $t$, holomorphic pullback, subtraction of $2\Rea g^t_\tau$,
and scalar division preserve the Levi-rank upper bound. The $C^2$ limit $M$
of \eqref{fi:it:eq:newexpansion} therefore has Levi rank at most $m$:
every $(m+1)$-minor vanishes before the limit and hence after it.
Its $W$ block is $A_W$. Since this block is invertible, its Schur
complement gives
\[
 \Levi K=(D_\eta H)^*A_W^{-1}D_\eta H,
 \qquad D_\eta H\in\operatorname{Mat}_{m\times N_f}(\C).
\]
Consequently $K(\eta,\bar\eta)-H(\eta)^*A_W^{-1}H(\eta)$
is pluriharmonic. Its polarization vanishes when either argument
is $\eta_*$, so it is zero. We obtain
\begin{equation}\label{fi:new:eq:qualitativekernel}
 M(W,\eta;\bar W',\bar\eta')
  =(W'+A_W^{-1}H(\eta'))^*A_W(W+A_W^{-1}H(\eta)).
\end{equation}
This identity gives kernel rank at most $m$; the invertible $W$
block gives equality. The inverse is used only to identify the
kernel, and no norm involving $A_W^{-1}$ is retained.

\emph{Step 4: bounded holomorphic functions and the $W$ block.}
Perform Fekete factorization of the already bounded kernel on a
compact product strictly larger than every subsequent required
ambient product and strictly inside its holomorphy extension, using
Lemma~\ref{fi:new:lem:fekete}. The resulting functions are finite constant
linear combinations of sections $M(z,\bar z_i)$, so are bounded
algebraic functions, affine in $W$. Denote their column
by $Q_{\rm new}$, its signature matrix by $\Sigma_{\rm new}$, and its
$m\times m$ coefficient matrix in $W$ by $C_{\rm new}$. Write
$G_{\rm new}$ for the remaining holomorphic dependence on $(U,V)$. Thus
\begin{equation}\label{fi:it:eq:newfeatures}
 M=Q_{\rm new}^*\Sigma_{\rm new}Q_{\rm new},\qquad
 Q_{\rm new}(U,V,W)=C_{\rm new}W+G_{\rm new}(U,V).
\end{equation}
The column $Q_{\rm new}$ has exactly $m$ components. Comparing the $W$ Levi block gives
\begin{equation}\label{fi:it:eq:newnormal}
 C_{\rm new}^*\Sigma_{\rm new}C_{\rm new}
      =c^2C_t^*\Sigma_tC_t,
\end{equation}
which is nonsingular. Thus $C_{\rm new}$ is invertible. Its norm
is bounded by Cauchy estimates for $Q_{\rm new}$; the latter bound
comes directly from determinant maximality. The scalar constants in
Fekete interpolation and spectral factorization do not change the
degree bound of Lemma~\ref{fi:it:lem:complexity}.

These formulas verify Definition~\ref{fi:it:def:class}, with $W$ in
place of $u$ and $(U,V)$ in place of $v$. Steps~1--2 and
Lemma~\ref{fi:it:lem:complexity} give bounded algebraic polarizations
for the coefficients. Equation~\eqref{fi:it:eq:newexpansion} gives
the required $\tau^{2\nu-1}$ remainder with a uniform ambient $C^2$
bound. Equations~\eqref{fi:it:eq:newfeatures}--\eqref{fi:it:eq:newnormal}
give the bounded leading column with an invertible $W$ coefficient
matrix. Step~3 supplies the rank bound for each frozen datum and
sufficiently small $\tau$. Thus $F^{t,\mathrm{new}}_\tau\in\cC_{\nu-1}$.
\end{proof}

\paragraph{Iteration.}
For the fixed depth $k$, reserve finitely many nested product polydiscs
for the polarizations, expansions, Fekete compacts, charts,
displacements, tubes and restrictions. Each operation uses a fixed
margin, including on the larger Fekete compacts. Apply
\eqref{fi:it:eq:exactnew} successively, with the same auxiliary
parameter and frozen branch data. The scale convention gives a common
$\tau_k>0$ for all quantitative bounds. For large $p$, $t<\tau_k$,
so evaluation at $\tau=t$ gives exact identities for the actual
weight. Section~\ref{fi:it:sec:metric} adds the tube errors separately.
The finitely many smaller tensor powers enter the final constant.
After $k$ steps, denote the leading holomorphic column
and signature matrix by $Q_k,\Sigma_k$, and the remaining normalized
remainder by $R^{t,(k)}_t$. Then
\begin{equation}\label{fi:it:eq:terminalfamily}
 F^{t,(k)}_t=Q_k^*\Sigma_kQ_k+tR^{t,(k)}_t,
       \qquad \|R^{t,(k)}_t\|_{C^2}\le C_k.
\end{equation}
The initial $t^4$ and the $k$ exact divisions by $t^2$ give an
original-potential residual of order $t^{2k+5}$.

Let $P\ge1$ be the coefficient multiplying $F^{t,(k)}_t$ in the
potential. For a cell of width $P^{-1/2}$ centered at $q_*\in\C^m$,
we use the identity
\[
 Q_k^*\Sigma_kQ_k-2\Rea\bigl(q_*^*\Sigma_kQ_k-\tfrac12q_*^*\Sigma_kq_*\bigr)
       =(Q_k-q_*)^*\Sigma_k(Q_k-q_*).
\]
If $Pt\le C$, subtracting $2P\Rea(q_*^*\Sigma_kQ_k-\tfrac12q_*^*\Sigma_kq_*)$
from $PF^{t,(k)}_t$ gives a uniformly bounded potential on the inverse
image of the outer cell within the chosen source product. The bound
for $Q_k$ on the larger compact gives polynomial bounds for the
normalized defining functions on their ambient products.
Only the image of $Q_k$ is gridded at this last step; its contribution
is counted in Section~\ref{fi:it:sec:count}.

\section{Restriction models with parameters}
\label{fi:it:sec:relative}

We retain the current variables $z$ and add the parameters of the
product maps as independent coordinates. This construction does not
require those maps to be immersions. The metric and inherited
inequalities stay in their original variables.

Throughout this section, a source is a buffered
holomorphic polyhedron $U$, with coordinates $x=(z,y)$
and a holomorphic map $\rho$ to a fixed coordinate neighborhood in
$X$. The current potential depends on $z$; $y$ contains the
variables retained from earlier stages. The defining functions
extend to larger coordinate products with polynomial bounds. At
fixed depth, the dimension, defining-list length and required
derivative orders are bounded. The constants and polynomial
exponents below are uniform in $p$, charts, centers, branches and
levels, with the dependence in
Section~\ref{fi:it:par:scale-conventions}.
If $\pi_x$ forgets newly added parameters, the bundle map becomes
$\rho\circ\pi_x$ and an old inequality $|G_j|<a_j$ becomes
$|G_j\circ\pi_x|<a_j$.

On these sources use the Hilbert complexes
\[
 \mathscr L^b(U)=L^2\bigl(U,\Lambda^{0,b}T^*U\otimes\rho^*F_p\bigr),
 \qquad
 \mathscr D^b(U)=\{u\in\mathscr L^b(U):
                 \db u\in\mathscr L^{b+1}(U)\}.
\]
The bundle metric is pulled back from $X$; the form metric and
volume are Euclidean in all source coordinates. The differential
is the full maximal distributional $\db$, including the parameter
directions. Definition~\ref{fi:def:restrictionmodel} is used with
these spaces and whole-source restriction.

For the next lemma, fix buffered $U^-\subset U^+$ in variables
$x=(z,y)$, and an outer product $\mathcal A$ containing the
$z$-projection of $U^+$. Let $f_i:B_i^+\to\C^{\dim z}$,
$1\le i\le K_p$, be holomorphic maps on parameter products of
bounded dimension contained in fixed polydiscs. Assume fixed
product extensions, polynomial bounds for the maps, fixed margins
for $B_i^-\Subset B_i^+$, and, for a fixed $0<\sigma<1$,
\begin{equation}\label{fi:it:eq:relativecover}
 \begin{gathered}
 K_p\le C(1+\log p)^L,\qquad
 \eta+\eta^{-1}\le Cp^A,\\
 \forall z\in\mathcal A\quad
 \exists i,\ \xi\in B_i^-:
       |z_\nu-(f_i)_\nu(\xi)|<\sigma\eta
       \quad(1\le\nu\le\dim z).
 \end{gathered}
\end{equation}
At a domain level $\lambda$, let $U(\lambda)$, $B_i(\lambda)$
and $c(\lambda)$ denote the source domain, parameter domain and
normalized tube bound. The parameter levels contain fixed
neighborhoods of $B_i^-$, all $c(\lambda)$ exceed $\sigma$ by a
fixed margin, and the complete defining lists have the buffers of
Section~\ref{fi:sc:domains}. For each ordered tuple
$I=(i_0,\ldots,i_a)$ set
\begin{equation}\label{fi:it:eq:relativeY}
 Y_I(\lambda)=\left\{(x,\xi_0,\ldots,\xi_a):
 \begin{array}{l}
 x=(z,y)\in U(\lambda),\quad \xi_j\in B_{i_j}(\lambda),\\
 |z_\nu-(f_{i_j})_\nu(\xi_j)|<c(\lambda)\eta
       \quad\text{for every }j,\nu
 \end{array}\right\}.
\end{equation}
All ordered chart tuples are used, including repeated chart
labels; their parameter copies remain independent. These are
holomorphic polyhedra in all variables.
\begin{samepage}
\begin{lemma}[Parameter integration]\label{fi:it:lem:relative}
Fix an integer $q_{\max}\ge0$ and $J=q_{\max}+1$. For the sources and parameter covers
in \eqref{fi:it:eq:relativecover}--\eqref{fi:it:eq:relativeY},
suppose the restriction of forms from $Y_I(\lambda_{a,j})$ to
$Y_I(\lambda_{a,j+1})$, for $|I|=a+1\le q_{\max}+2$ and $0\le j<J$,
has an order-$(q_{\max}+1)$ model with at most $M_p$ labeled restriction
occurrences over the required Dolbeault degrees. Assume polynomial
bounds uniform in $I,j$ and the factor labels. Then the restriction of
forms from $U^+$ to $U^-$ has an order-$q_{\max}$ model with at most
$CM_p(1+\log p)^{L'}$ restriction occurrences and polynomial bounds.
\end{lemma}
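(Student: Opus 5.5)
The plan is to run the \v{C}ech--Dolbeault machinery of Sections~\ref{fi:sc:domains}--\ref{fi:sc:layers}. The cover members $U\cap V_i$ are replaced by the parameter polyhedra $Y_I(\lambda)$ of \eqref{fi:it:eq:relativeY}. Pullback along the forgetful projection plays the role of augmentation and coface restriction. Fiber integration over the parameter copies, against compactly supported cutoffs, plays the role of collation.

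\emph{Cochains.} Set $C_j^{a,b}=\bigoplus_{|I|=a+1}\mathscr L^b(Y_I(\lambda_{a,j}))$. Here $I$ runs over all ordered tuples of length at most $q_{\max}+2$, with the antisymmetric convention, and repeated labels are allowed since the copies $\xi_j$ are independent. For a face $I\subset I'$ obtained by deleting $i_k$, the map forgetting $\xi_k$ sends $Y_{I'}(\lambda_{a+1,j})$ into $Y_I(\lambda_{a,j})$, because the radii decrease in $a$ and the tube inequalities for the remaining copies are inherited. Pullback along it is holomorphic, so it commutes with the maximal $\db$ and defines $\delta_j$. The bound $\delta_j^2=0$, the relation $d\delta+\delta d=0$, and the bound $\|\delta_j\|^2\lesssim N_p$ follow as in Section~\ref{fi:sc:domains}. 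The only change is that pullback is no longer norm one: it costs the parameter volume, which is polynomial by the fixed parameter polydiscs and $\eta^{\pm1}\le Cp^A$. Augmentation $A_0$ is $\pi_x^*$ into \v{C}ech degree zero, and the layer maps $R_j$ are restrictions between levels.

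\emph{Collation.} I choose smooth functions $\psi_i(z,\xi)$ with compact $\xi$-support inside a fixed neighbourhood of $B_i^-$ and support in the tube $|z_\nu-(f_i)_\nu(\xi)|<c'\eta$, where $\sigma<c'<\min_\lambda c(\lambda)$. I normalize them so that $\sum_i\int\psi_i(z,\xi)\,dV(\xi)=1$ for $z\in\mathcal A$. This is possible by \eqref{fi:it:eq:relativecover}: each $z$ has some $\xi\in B_i^-$ in a $\sigma\eta$-tube, so an open $\xi$-set of volume $\gtrsim\eta^{2\dim\xi}$ meets the enlarged tube. The resulting normalizing denominators are bounded below polynomially, and the derivatives of the $\psi_i$ are $O(\eta^{-C})$. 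I write $dV(\xi)=c\,d\xi\wedge d\bar\xi$, where $d\xi$ is the holomorphic top form, which is $\db$-closed. Then, as in \eqref{fi:sc:collation}, I set
\[
 \mathsf C_{J,a}u=\int_{\xi_0,\ldots,\xi_a}\sum_{I}\Psi_{i_0}\,\db\Psi_{i_1}\wedge\cdots\wedge\db\Psi_{i_a}\wedge u_I ,
\]
with $\Psi_i=\psi_i\,d\xi\wedge d\bar\xi$ and fiber integration extracting the parameter top degree. Compact fiber support and Stokes give $\db\circ\int=\int\circ\db$ on maximal domains, justified by the distributional product rule and extension by zero away from the fine boundaries. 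The identities $\int\sum_i\Psi_i=1$ and $\sum_i\int\db\Psi_i=0$ then give $\mathsf C_JD_J=\db\mathsf C_J$ and $\mathsf C_J\cR^JA_0=R$ exactly as in Section~\ref{fi:sc:terminal}. The deleted index integrates out because forgotten copies appear only through $\Psi_{i_k}$.

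\emph{Assembly.} With these, I apply Lemma~\ref{fi:lem:composition}. Its inputs are the assumed order-$(q_{\max}+1)$ tuple models, together with polynomial bounds for $R_j$, $\delta_j$ and the tuple counts. It yields $\cR^J=S+D_J\mathcal T+\mathcal TD_0$, and I sandwich this between $\mathsf C_J$ and $A_0$. The number of tuples of length at most $q_{\max}+2$ is at most $K_p^{q_{\max}+2}\le C(1+\log p)^{L(q_{\max}+2)}$. So $\sum_jM_j\le JK_p^{q_{\max}+2}M_p$, which is the claimed $CM_p(1+\log p)^{L'}$.

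The step I expect to be the main obstacle is the support bookkeeping for the collation. The $\xi$-support and tube support of $\psi_i$, and of each $\db\psi_i$ (which also depends on $z$ through the tube), must lie strictly inside every $Y_I(\lambda_{a,J})$. At the same time, the inner domains must still see the full $\sigma\eta$-cover of the $z$-projection of $U^-$, so that $\int\sum\Psi_i=1$ holds there. I would handle this by fixing $c'$ and the $\xi$-neighbourhood between $\sigma$ and the smallest level $c(\lambda_{q_{\max}+1,J})$ using the fixed margins. I would also check that no extension by zero crosses an inherited source inequality of $U$, since the cutoffs do not depend on $y$.
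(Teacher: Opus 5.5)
Your proposal is correct and follows essentially the same route as the paper: normalized tube-and-parameter cutoffs (the paper's $\varphi_i=\chi_i/D(z)$, with a polynomial lower bound for $D$), augmentation by $\pi_x^*$, face pullback forgetting one parameter copy, collation by integration over the parameter copies, then Lemma~\ref{fi:lem:composition} sandwiched between $A_0$ and $\mathsf C_J$, with the count $\sum_a K_p^{a+1}$ of ordered tuples. The only difference is cosmetic: your fiber integral of $\psi_i\,d\xi\wedge d\bar\xi$ is the paper's integral of the horizontal component $(u_I)_{\rm hor}$ against $dV_\xi$. Also, as in the paper, the cochains can simply run over all ordered tuples, so no antisymmetry, and hence no permutation-equivariance of the child models, is needed.
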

\end{samepage}

\begin{proof}
\emph{Parameter partition.}
Choose $\chi_i(\xi,z)$ as products of smooth scalar cutoffs for
the inequalities of \eqref{fi:it:eq:relativeY}. They equal one
on fixed enlargements of $B_i^-$ and of the closed tubes
$|z_\nu-(f_i)_\nu(\xi)|\le\sigma\eta$ for every $\nu$, and
are supported inside all required parameter and tube levels. Set
\[
 D(z)=\sum_i\int\chi_i(\xi,z)\,dV_\xi,
 \qquad \varphi_i(\xi,z)=\chi_i(\xi,z)/D(z),
\]
where $dV_\xi$ is Euclidean measure. At a parameter point supplied
by \eqref{fi:it:eq:relativecover}, the fixed margins and polynomial
derivative bounds give a ball of radius at least $cp^{-A'}$ on
which $\chi_i=1$. Bounded parameter dimension gives $D(z)\ge
cp^{-A''}$. All required derivatives of $\varphi_i$ are therefore
polynomially bounded. This lower bound is independent of the
admissible $y$, since the old constraints involve no new parameters.
For a zero-dimensional parameter product, the integral means
counting measure.

\emph{Collation and domains.}
Put $Y_{I,j}=Y_I(\lambda_{a,j})$ and
\[
 C_j^{a,b}=\bigoplus_{I=(i_0,\ldots,i_a)}\mathscr L^b(Y_{I,j}),
 \qquad C_j^q=\bigoplus_{a+b=q}C_j^{a,b}.
\]
Let $d_j=(-1)^a\db$, let $\delta_j$ be alternating face
pullback, and put $D_j=d_j+\delta_j$. A face forgets one parameter
copy; the remaining defining functions agree, and the bounds
$\lambda_{a,j}$ decrease in $a,j$. Thus face pullback and layer
restriction are defined on the actual domains and commute.

Let $A_0u=(\pi_x^*u)_i$ be augmentation in \v{C}ech degree zero.
Write $u_{\rm hor}$ for the component using only
$d\bar z,d\bar y$, and $\db_x$ for differentiation in $x=(z,y)$.
Define $\mathsf C_J=\sum_a\mathsf C_{J,a}$ by
\begin{align}
 \mathsf C_{J,a}u={}&\sum_{i_0,\ldots,i_a}\int
 \varphi_{i_0}(\xi_0,z)\,
 \db_x\varphi_{i_1}(\xi_1,z)\wedge\cdots\wedge
 \db_x\varphi_{i_a}(\xi_a,z)\notag\\
 &\qquad\wedge(u_I)_{\rm hor}(x,\xi_0,\ldots,\xi_a)
 \,dV_{\xi_0}\cdots dV_{\xi_a}.
 \label{fi:it:eq:collation}
\end{align}
In the face sum, deletion of the zeroth copy uses
$\sum_i\int\varphi_i=1$; all other deletions use
$\sum_i\int\db_x\varphi_i=0$. Write $R$ for the restriction of forms
from $U^+$ to $U^-$. Then, with $\cR^J=R_{J-1}\cdots R_0$,
\begin{equation}\label{fi:it:eq:chain}
 D_0A_0=A_0\db,\qquad
 \db\mathsf C_J=\mathsf C_JD_J,\qquad
 \mathsf C_J\cR^JA_0=R.
\end{equation}

These identities also hold on maximal domains. Indeed, the
horizontal component of the full $\db u$ is
$\db_x(u_{\rm hor})$: every derivative in a parameter direction
has a $d\bar\xi_j$ factor. Pairing \eqref{fi:it:eq:collation} with a
compactly supported test form on $U^-$ gives test forms supported
inside the buffered parameter and tube inequalities on $Y_{I,J}$.
Fubini and the distributional product rule therefore prove the
second identity in \eqref{fi:it:eq:chain}. Extension by zero crosses
only these buffered boundaries; the test form stays away from the
old source boundaries. This argument never restricts an $L^2$ form
to an individual parameter slice.

If $\pi$ forgets one parameter copy, its fibers have uniformly
bounded Euclidean volume. Fubini gives
$\|\pi^*u\|^2\le C\|u\|^2$ and distributional differentiation
gives $\db\pi^*u=\pi^*\db u$, also on clipped fiber powers.
The same argument applies to $A_0$. Cauchy--Schwarz in the
parameter integrals, followed by the cutoff derivative bounds,
gives polynomial bounds for collation. The actual bundle metric
depends on $x$ and is independent of the integrated parameters.
Apart from affine normalizations, these coordinate projections
are the only pullbacks of $L^2$ forms used here.

\emph{Transfer and count.}
Apply Lemma~\ref{fi:lem:composition} to the tuple models,
then compose with $A_0$ and $\mathsf C_J$ using
\eqref{fi:it:eq:chain}. The number of coarse tuples is at most
\[
 \sum_{a=0}^{q_{\max}+1}K_p^{a+1}
       =O\bigl((1+\log p)^{(q_{\max}+2)L}\bigr).
\]
At each layer, the total count therefore satisfies
$M_j\le M_p\sum_{a=0}^{q_{\max}+1}K_p^{a+1}$.
Equation~\eqref{fi:sc:additivefactorization} adds these counts over the
$J$ layers.
A face may change the first parameter copy: the coordinate
projection still has the domain and norm properties checked above.
\end{proof}

In the application, with center rank $m$ fixed along the branch,
\[
 \xi_j=(b_j,V_j,W_j),\qquad \dim b_j\le m.
\]
For each tuple, only $b_0$ in its first copy is gridded; $V_0,W_0$
and all other copies are retained without a grid. The parameter
integrals use all coordinates of each $\xi_j$.

\subsection{Recursion and degree bounds}
\label{fi:new:sec:degrees}

\begin{samepage}
\begin{proposition}[Recursive restriction models]
\label{fi:new:prop:analyticinduction}
Fix an integer $q_{\max}\ge0$ and $J=q_{\max}+1$. Let $U^-\subset U^+$ be buffered
sources with either an ordinary cover as in
Section~\ref{fi:sc:domains}, with polynomially many retained
support tuples, or a parameter cover satisfying
\eqref{fi:it:eq:relativecover}--\eqref{fi:it:eq:relativeY}.
Suppose each tuple restriction at layer $j$, for \v{C}ech degrees
$0\le a\le q_{\max}+1$ and $0\le j<J$, has an order-$(q_{\max}+1)$ model
with $M_{I,j}$ labeled restriction occurrences over the required
Dolbeault degrees and polynomial bounds uniform in $I,j$ and labels.
Then the restriction of forms from $U^+$ to $U^-$ has an
order-$q_{\max}$ model with polynomial bounds and at most a fixed multiple of
\begin{equation}\label{fi:new:eq:childsum}
 \sum_{j=0}^{q_{\max}}\sum_I M_{I,j}
\end{equation}
restriction occurrences.
\end{proposition}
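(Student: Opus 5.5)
The plan is to reduce both cases to Lemma~\ref{fi:lem:composition}, followed by augmentation and collation, exactly as was done at the root in Section~\ref{fi:sc:global}. First I will set up the cochain complexes $C_j^{a,b}$, $0\le j\le J$, $0\le a\le q_{\max}+1$. In the ordinary case they are the complexes of Section~\ref{fi:sc:domains}, built from the tuple domains \eqref{fi:sc:tupledomain}. In the parameter case they are the complexes of the proof of Lemma~\ref{fi:it:lem:relative}, built from the domains $Y_{I,j}$ of \eqref{fi:it:eq:relativeY}. In both cases the layer maps $R_j$ are direct sums of tuple restrictions. They are bounded, preserve maximal domains, and commute with $d_j$ and $\delta_j$, because face maps and layer restrictions are defined on the actual nested domains. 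The face differential $\delta_j$ is polynomially bounded by the tuple count, as estimated after \eqref{fi:sc:tupledomain}. In the parameter case the needed Fubini bound for forgetting a parameter copy was established in Lemma~\ref{fi:it:lem:relative}.

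Next I will apply Lemma~\ref{fi:lem:composition} with the hypothesized order-$(q_{\max}+1)$ tuple models, giving each the sign $(-1)^a$. The count at layer $j$ is $M_j=\sum_I M_{I,j}$, so the lemma yields
\[
\cR^J=S+D_J\mathcal T+\mathcal T D_0
\]
through degree $q_{\max}$. Here $S$ factors through at most $\sum_{j<J}\sum_I M_{I,j}$ labeled restrictions, and this is \eqref{fi:new:eq:childsum}, since $J-1=q_{\max}$. Uniformity in $I,j$ and the labels ensures that the polynomial exponents are fixed.

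The output model is then obtained by composition. Set
\[
K=\mathsf C_JSA_0,\qquad T=\mathsf C_J\mathcal T A_0.
\]
The chain identities $D_0A_0=A_0\db$, $\mathsf C_JD_J=\db\mathsf C_J$ and $\mathsf C_J\cR^JA_0=R$ hold in both settings: for ordinary covers by Section~\ref{fi:sc:terminal}, and for parameter covers by \eqref{fi:it:eq:chain}. Together they give $R-K=\db T+T\db$ on $\mathscr D_+^b$ for $b\le q_{\max}$. Each summand of $K$ has the form $(\mathsf C_J[\cdots]B)\cH(A[\cdots]A_0)$, with polynomially bounded outer factors, because the cutoffs $\chi_i$ or $\varphi_i$ have polynomially bounded derivatives. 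The homotopy bounds follow directly: $\db T=\mathsf C_J(D_J\mathcal T)A_0$ is bounded, and $T\db=\mathsf C_J(\mathcal T D_0)A_0$ extends boundedly through degree $q_{\max}$.

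I expect the main obstacle to be the domain bookkeeping at the top degrees, not the algebra. One must check that $T$ takes values in the maximal domain of $U^-$ through degree $q_{\max}+1$. This requires that collation preserves maximal domains via the distributional product rule, with extension by zero crossing only buffered cover or tube boundaries and never the outer source boundary. One must also check that truncating $\delta=0$ in \v{C}ech degree $q_{\max}+1$ does not affect the identities through total degree $q_{\max}$. I would handle this by citing the arguments already given after \eqref{fi:sc:collation} and \eqref{fi:it:eq:chain}. For the parameter case, I would additionally note that the fixed multiple absorbs the bounded number of layers, and that $\sum_I$ already includes the $K_p^{a+1}$ tuples.
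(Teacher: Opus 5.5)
Your proposal is correct and follows the paper's own proof. You apply Lemma~\ref{fi:lem:composition} with $M_j=\sum_I M_{I,j}$, set $T=\mathsf C_J\mathcal T A_0$ and $K=\mathsf C_JSA_0$ via ordinary or parameter collation and the chain identities, and read off the count $\sum_{j<J}M_j$, which is \eqref{fi:new:eq:childsum}; your remarks on maximal domains and the child degrees through $q_{\max}+2$ are the same points the paper leaves to the composition lemma and the collation arguments.
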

\end{samepage}
\begin{proof}
Apply Lemma~\ref{fi:lem:composition} to the tuple complexes, with
$M_j=\sum_I M_{I,j}$. Use ordinary collation from Section~\ref{fi:sc:terminal}, or
parameter collation from \eqref{fi:it:eq:chain}, and set
\[
 T=\mathsf C_J\mathcal T A_0,\qquad K=\mathsf C_JSA_0.
\]
The chain identities and \eqref{fi:sc:layeridentity} give
\eqref{fi:new:eq:modelpackage}. The composition lemma supplies
the maximal-domain properties, both bounded differential
products and \eqref{fi:new:eq:childsum}. Its homotopies use child
degrees through $q_{\max}+2$, which are supplied by the assumed
order-$(q_{\max}+1)$ models.
\end{proof}

Terminal polyhedra with bounded scalar weights in holomorphic
frames supply every finite order by
\eqref{fi:sc:terminalmodel}. Number child-model insertions from
the root by $h$. The preceding degree calculation gives
\begin{equation}\label{fi:new:eq:budget}
 q_{\max,0}=n,\qquad q_{\max,h+1}=q_{\max,h}+1=n+h+1,\qquad J_h=q_{\max,h}+1.
\end{equation}
At depth $k$ there are two outer grid insertions, two for each
rescaling (parameter integration and first-copy grid), and one
final cover of the range of $Q_k$. Thus $H=2k+3$ bounds the number
of insertions and $q_{\max,*}=n+H$ bounds all orders. Set
$D_k=n+(k+1)r$. This bounds the number of current potential
variables and the dimension of a single parameter copy. A relative
tuple has at most $q_{\max,*}+2$ such copies; each adds at most $D_k$
coordinate and $D_k$ tube inequalities. An ordinary tuple adds at
most $(q_{\max,*}+2)D_k$ inequalities. Consequently the ambient dimension
$N_h$, which includes all retained copies, and the defining-list
length $I_h$ satisfy
\[
 N_{h+1}\le N_h+(q_{\max,*}+2)D_k,\qquad
 I_{h+1}\le I_h+\bigl(2(q_{\max,*}+2)+1\bigr)D_k.
\]
Both remain bounded independently of $p$ at fixed $n,k$.

\paragraph{Buffer allocation.}\label{fi:new:par:buffers}
Normalize a node's interval of inequality bounds
to $(s-\beta,s)$ and choose
\[
 \lambda_{a,j}=s-\frac{(a+j+1)\beta}{4(q_{\max,h}+2)},
 \qquad 0\le a\le q_{\max,h}+1,\quad 0\le j\le J_h.
\]
These bounds exceed $s-\beta/2$. Put cutoff supports inside their
cover inequalities at level $s-3\beta/4$, and the output source at
$s-\beta$. The partition is defined on the entire outer source;
it has no cutoff at that source boundary. Each child receives a
layer interval of width $\beta/[4(q_{\max,h}+2)]$. Starting with width
$\beta_0$, the smallest reserved margin is at least
\[
 \beta_0[4(q_{\max,*}+2)]^{-H}>0.
\]
This allocation applies to the complete defining list, uniformly
at every node of the fixed-depth construction.

Section~\ref{fi:it:sec:count} verifies that the geometric
construction supplies these recursive models. The resulting
global identity is the input to
Proposition~\ref{fi:prop:localization}.

\section{Metric estimates}\label{fi:it:sec:metric}

Fix the depth $k\ge0$ and set
\begin{equation}\label{fi:it:eq:scales}
 t=p^{-1/(2k+5)},\qquad P_\ell=pt^{4+2\ell}
       \quad(0\le\ell\le k).
\end{equation}
Thus $P_{\ell+1}=P_\ell t^2$ and $P_kt=1$.
We track the metric weight as an exact finite-jet family plus a
scalar error. The family carries the rank information; the error
will be bounded in sup norm on the entire terminal domain.
All source domains and cutoffs follow
Section~\ref{fi:it:sec:relative}. Tubes are defined by scalar
coordinate inequalities; vector norms and parameter balls are
Euclidean. Constants use the fixed-depth convention of
Section~\ref{fi:it:par:scale-conventions}.

\subsection{Outer grids and tube coverage}
A mesh-$t$ grid in the atlas has
$O(t^{-2n})$ support tuples in each required degree. Choose a regular
center within distance $O(t)$ of each tuple, of rank $m\le r$, and
retain its unitary splitting into $x\in\C^m$ and
$y\in\C^{n-m}$, where the $x$ Levi block at the center is
invertible. On the tuple, $x$ ranges over a box of diameter $O(t)$.
A mesh-$t^2$ grid in $x$ has $O(t^{-2m})$ cells and support tuples.
On each resulting tuple write
$(x,y)=(t\alpha+t^2u,tv)$, with $\alpha,u,v$ bounded independently
of $p$; its inequalities are still evaluated at $(x,y)$.
Regular centers exist by Lemma~\ref{fi:oa:regular}, since points of
locally constant rank are dense. The constant-rank neighborhood of the
chosen center may be much smaller than $t$; the centers are not
required to have neighborhoods of uniform size.
Lemma~\ref{fi:it:lem:initial} applies on these boxes even if they
extend beyond the regular germ.

When $m=0$, its last assertion gives a bounded weight after a
holomorphic frame change:
$p\tau^{2k+5}R^t_\tau|_{\tau=t}=R^t_t$. This tuple proceeds
to the terminal restriction construction without an $x$ grid
or a rescaling step. Henceforth in this subsection $m>0$.

At every nonterminal stage $0\le\ell<k$, use
\begin{equation}\label{fi:it:eq:common-accuracy}
 \epsilon=(2p)^{-2},\qquad \eta=C_0(2p)^{-2},
\end{equation}
where $C_0=40$. We take $p$ large enough for these
radii to lie within the prescribed margins; the finitely many
remaining powers are absorbed into the final constant.
Proposition~\ref{fi:cell:prop:targetproducts} gives
$K_p\le C_k(1+\log(1/\epsilon))^{L_k}
\le C_k(1+\log p)^{L_k}$ product maps at each stage.
The product on which coverage is required,
denoted $\mathcal A_{\rm out}$, is contained in the prescribed inner
box of Proposition~\ref{fi:cell:prop:targetproducts}; its larger
working box lies in the domain of the current family. This nesting
is imposed before any inherited constraints are applied. Consequently
every point of $\mathcal A_{\rm out}$ admits an approximating
parameter in one of the product charts, including points
outside the projection of a particular constrained source.

For the product charts $f_i$, choose $c>0$ as in
Lemma~\ref{fi:it:lem:renormalize}. Write $\xi=(b,V,W)$ for the
full parameter and set
\[
 \widehat f_{i,t}(\xi)=f_i(b,V)+ct(W,0),\qquad W\in\C^m.
\]
These maps extend to larger products. For every
$z\in\mathcal A_{\rm out}$, the product approximation lemma
supplies a chart $i$ and a parameter $\xi_*$ with $W=0$ such that
$|z-\widehat f_{i,t}(\xi_*)|\le\epsilon<\eta/20$. The parameter
$\xi_*$ has distance at least $\delta_*>0$ from the boundary
of the region where the parameter cutoff equals one. After shrinking that region by a fixed
amount, let $L_*$ bound the differential of $\widehat f_{i,t}$ in
Euclidean norms there.
Cauchy estimates give $L_*\le C_k$, including the $W$ directions.
If the parameter space containing $\xi_*$ has complex dimension
$s$, set
\begin{equation}\label{fi:it:eq:witness-radius}
 \rho_\ell=\min\left\{\frac{\delta_*}{2},
                   \frac{\eta}{20(1+L_*)}\right\}.
\end{equation}
The ball $|\xi-\xi_*|<\rho_\ell$ stays in that parameter region and
satisfies $|z-\widehat f_{i,t}(\xi)|<\eta/10$. In particular, every
scalar tube inequality holds with bound $\eta/10$. Taking tube
cutoffs $\chi_i$ equal to one there yields, with
$c_s$ denoting the Euclidean volume of the unit ball in $\C^s$,
\[
 \int\chi_i(\xi,z)\,dV_\xi
       \ge c_s\rho_\ell^{2s}
       \ge c_k(2p)^{-4s}.
\]
For $s=0$ the integral is counting measure and equals one.
For each fixed derivative order $j$, the unnormalized cutoffs
satisfy $\|D^j\chi_i\|_{L^\infty}\le C_{k,j}\eta^{-j}
\le C_{k,j}(2p)^{2j}$. The normalization denominator in
relative collation is bounded below by $c_k(2p)^{-4s_*}$,
where $s_*$ bounds the parameter dimensions. Differentiating
the quotient gives polynomial bounds for the normalized
cutoffs in every required order; the sum over $K_p$ maps
contributes only a fixed power of $1+\log p$.

The new source is the tube domain \eqref{fi:it:eq:relativeY},
with $z$ independent of $\xi$ and center $\widehat f_{i,t}(\xi)$.

Grid only $b$, with mesh $t$, before clipping by source inequalities.
On a retained tuple with grid center $b_*$, distinguish the original
and normalized parameters by
\[
 \xi=(b,V,W)=(b_*+tU,V,W),\qquad \zeta=(U,V,W).
\]
This affine change preserves the parameter dimension. If
$\dim b=m_i$, then $dV_\xi=t^{2m_i}dV_\zeta$; both Jacobian
bounds are polynomial in $p$. Apply
Lemma~\ref{fi:it:lem:renormalize} with chart and center frozen.
The variables $V,W$ remain ungridded, and the next leading Levi
matrix has an invertible $W$ block. At each later
stage, the same nesting places the required outer product in
$z$ inside the next covered box. Since the depth
is fixed, the product approximation and Fekete steps require only
a finite list of margins, chosen independently of $p$.

\subsection{Recursion for the metric error}

On the current source, let
$F^{t,(\ell)}_\tau$ be the finite-jet family in variables
$z_\ell$, and write $y_\ell$ for the remaining coordinates.
Let $\mathcal V_\ell$ be the pulled-back logarithmic metric
weight. We construct a holomorphic function $\Gamma_\ell$ and
a scalar error $E_\ell$ satisfying
\begin{equation}\label{fi:it:eq:metricinvariant}
 \mathcal V_\ell(z_\ell,y_\ell)-2\Rea\Gamma_\ell(z_\ell,y_\ell)
   =P_\ell F^{t,(\ell)}_t(z_\ell)+E_\ell(z_\ell,y_\ell),
       \qquad |E_\ell|\le C_\ell.
\end{equation}
The rank and finite-expansion conditions apply to
$F^{t,(\ell)}_\tau(z_\ell)$ alone. The error $E_\ell$ may depend on all
source variables and need not satisfy any derivative or rank bound.
The terminal norm comparison \eqref{fi:eq:gaugebound} uses only its
sup norm.

Initially $E_0=0$, by \eqref{fi:it:eq:initial}, with
$\Gamma_0=p g^0_{t,\alpha}$ evaluated at the normalized
coordinates of the original point. On a selected tuple of the
next parameter grid, use $\zeta=(U,V,W)$ and write the tube center as
\[
 \widehat f_\ell(\zeta)
   =\widehat f_{i,t}(b_*+tU,V,W)=\Psi^t_t(\zeta).
\]
The holomorphic function $g_\ell$ from
Lemma~\ref{fi:it:lem:renormalize} satisfies
\[
 F^{t,(\ell)}_t(\widehat f_\ell(\zeta))
    =2\Rea g_\ell(\zeta)+t^2F^{t,(\ell+1)}_t(\zeta).
\]
Define
\begin{align}
 \Gamma_{\ell+1}&=\Gamma_\ell+P_\ell g_\ell(\zeta),\qquad P_{\ell+1}=P_\ell t^2,\notag\\
 E_{\ell+1}&=E_\ell(z_\ell,y_\ell)
       +P_\ell\bigl[F^{t,(\ell)}_t(z_\ell)
            -F^{t,(\ell)}_t(\widehat f_\ell(\zeta))\bigr].
       \label{fi:it:eq:errorrecurrence}
\end{align}
This proves the next instance of \eqref{fi:it:eq:metricinvariant}
exactly. The preceding functions $\Gamma_\ell,E_\ell$ remain
evaluated at $(z_\ell,y_\ell)$, where the inherited constraints
hold. Only the potential $F^{t,(\ell)}_t$ is evaluated at the
center; that center need not satisfy
the old constraints.

The finite-jet class supplies uniformly bounded first derivatives
of each potential $F^{t,(\ell)}_t$ on its ambient product. The
current point and tube center lie in this product, which is convex.
The scalar tube inequalities give
\[
 |z_\ell-\widehat f_\ell(\zeta)|
       \le C\sqrt{\dim z_\ell}\,\eta,
 \qquad |E_{\ell+1}|\le C_\ell+C_kP_\ell\eta.
\]
Here $\dim z_\ell\le n+\ell m$, so the Euclidean norm factor is
absorbed into the fixed-depth constant $C_k$.
The common accuracy \eqref{fi:it:eq:common-accuracy} and
$P_\ell\le p$ give
\[
 P_\ell\eta\le\frac{C_0}{4p},\qquad
 \|E_\ell\|_{L^\infty}
 \le C_k\sum_{j=0}^{\ell-1}P_j\eta
 \le\frac{C_k\ell}{p}.
\]
Thus the accumulated error is bounded for fixed $k$. In particular,
\eqref{fi:it:eq:errorrecurrence} does not divide $E_\ell$ by $t^2$.
On each coarse fiber power, the first copy supplies the new variables
$(U,V,W)$; the other coordinates enter $y_{\ell+1}$, as prescribed in
Section~\ref{fi:it:sec:relative}. We next count the resulting
restrictions and verify the hypotheses of localization.

\section{Restriction count and assembly}\label{fi:it:sec:count}

Use the scales \eqref{fi:it:eq:scales}. The following table lists
every grid in the construction. The first row counts all outer
tuples; the other rows give the costs within a rank-$m$ branch.
Product charts and their coarse tuples contribute fixed powers of
$1+\log p$. Source dimensions, tuple degrees and the number of
layers are bounded in terms of the fixed depth $k$.
\begin{center}
\small
\begin{tabular}{llll}
\hline
Localization & Variables & Mesh & Multiplicity\\
\hline
Outer atlas & all $n$ coordinates & $t$ & $O(t^{-2n})$\\
Initial parabolic & $x\in\C^m$ in a width-$t$ box & $t^2$ & $O(t^{-2m})$\\
Step $\ell$ & $b\in\C^{m_{\ell+1}}$, $m_{\ell+1}\le m$
 & $t$ & $O(t^{-2m_{\ell+1}})$\\
Terminal & range of $Q_k\in\C^m$ & $P_k^{-1/2}$ & $O(P_k^m)$\\
\hline
\end{tabular}
\end{center}
\paragraph{Terminal cost.}
At the terminal stage,
\eqref{fi:it:eq:terminalfamily} and a target grid of width $P_k^{-1/2}$
in the bounded image of $Q_k$ give $O(P_k^m)$ cells
and retained support tuples. Subtracting the pluriharmonic term
in the quadratic identity bounds the leading weight,
$P_ktR_t$ is bounded, and \eqref{fi:it:eq:metricinvariant} adds only the
bounded accumulated error. Thus the actual metric is bounded on
each whole terminal outer domain.

\paragraph{Recursive cost.}
At a nonterminal stage, each product chart has
$m_j\le m$ $b$-coordinates. A complex mesh-$t$ grid
in $b$ costs $O(t^{-2m_j})$ support tuples in every fixed degree.
The variables $V,W$ receive no mesh, and neither do the retained
variables or the other parameter copies. They only increase the
dimension of the sources and the length of the defining lists. Both
are bounded at fixed depth, so they affect constants and the exponent
of $1+\log p$ in Lemma~\ref{fi:lem:restriction}, but they produce no
further power of $p$. A grid in the range of
$Q_k$ is used only at stage $k$. On each tuple of
the $b$-grid, the next family has coefficient
$P_{\ell+1}=P_\ell t^2$ in the metric weight. Let
$\mathcal B_\ell$ be the largest number of holomorphic
restriction occurrences obtained from this stage, over all
admitted retained variables and constraints. Relative collation over
the $O((1+\log p)^{(q_{\max}+2)L})$ coarse tuples of charts
(Lemma~\ref{fi:it:lem:relative}) and localization in the first
parameter copy give
\begin{align}
 \mathcal B_\ell&\le C_k(1+\log p)^{L_k}
                      t^{-2m}\mathcal B_{\ell+1},\qquad 
 \mathcal B_k\le C_kP_k^m(1+\log p)^{L_k}.
       \label{fi:it:eq:countrecurrence}
\end{align}
The count is additive by \eqref{fi:sc:additivefactorization}: each
summand designates one holomorphic restriction and keeps its other
operator factors unexpanded. Thus child counts are summed over
tuples; they are not multiplied across parameter copies. The nerve
degree changes only the logarithmic factor.

Iterating \eqref{fi:it:eq:countrecurrence} yields
\begin{equation}\label{fi:it:eq:telescope}
 \mathcal B_0\le C_k t^{-2km}P_k^m(1+\log p)^{M_k}
        =C_k(pt^4)^m(1+\log p)^{M_k}.
\end{equation}
Each factor $t^{-2m}$ from a new grid cancels the factor
$t^{2m}$ gained by lowering the terminal coefficient. The iteration
therefore bounds the final remainder without increasing the
nonlogarithmic cost $P_0^m$.
More precisely, dimensions $m_1,\ldots,m_k\le m$ of the $b$-coordinates along a
branch give the nonlogarithmic factor
\[
 t^{-2\sum_jm_j}P_k^m
       =(pt^4)^m t^{2\sum_j(m-m_j)}\le(pt^4)^m.
\]
A smaller $b$-dimension can only improve this count. After $\ell$ steps
the finite-jet family has at most $n+\ell m$ variables. The
fiber products include additional retained coordinates; their
dimensions remain bounded in terms of $n,k$ and the degree budget.

\paragraph{Global cost.}
Assign each outer support tuple to one mesh-$t$ box, counting
the bounded number of degrees and domain levels. The fixed atlas
and bounded mesh overlap give $O(t^{-2n})$ such occurrences.
Choose one regular center for each occurrence as in
Section~\ref{fi:it:sec:metric}, and let $N_m(p)$ count those whose
chosen center has curvature rank $m$. These groups partition the
occurrences, so
\begin{equation}\label{eq:rank-parent-count}
 \sum_{m=0}^r N_m(p)\le Ct^{-2n}.
\end{equation}
For an outer tuple whose chosen center has rank $m$, the
initial parabolic grid contributes
$O(t^{-2m})$ and \eqref{fi:it:eq:telescope} gives its subsequent
cost; together they give $t^{-2m}(pt^4)^m=(pt^2)^m$, up to the
logarithmic factor. This is the number of cells of width $p^{-1/2}$ in
$m$ complex directions of a box of width $t$. If $m=0$,
Lemma~\ref{fi:it:lem:initial} gives a terminal
bounded weight directly, with no $x$ grid; both factors
are then one. Since $pt^2=p^{1-2/(2k+5)}\ge1$ and every center has rank
$m\le r$, we have $(pt^2)^m\le(pt^2)^r$ in each group. Summing over the
groups by \eqref{eq:rank-parent-count} gives
\begin{align}
 B_p&\le C_k(1+\log p)^{M_k}
       \sum_{m=0}^rN_m(p)t^{-2m}(pt^4)^m
     \le C_k(1+\log p)^{M_k}(pt^2)^r\sum_{m=0}^rN_m(p)\notag\\
    &\le C_k(1+\log p)^{M_k}t^{-2n}(pt^2)^r
     =C_kp^{r+2(n-r)/(2k+5)}(1+\log p)^{M_k}.
       \label{fi:it:eq:global}
\end{align}
No constant-rank neighborhood is counted in
\eqref{eq:rank-parent-count}. The only loss relative to $p^r$ is the
factor $t^{-2(n-r)}=p^{2(n-r)/(2k+5)}$: the $n-r$ remaining directions
are localized at mesh $t$ instead of at unit scale.

\begin{remark}[One rescaling step]\label{audit:one-step}
For $k=1$, the scales are $t=p^{-1/7}$, $P_0=pt^4=p^{3/7}$ and
$P_1=P_0t^2=p^{1/7}$. Thus $P_1t=1$, and the additional scalar
error is $O(P_0\eta)=O(p^{-11/7})$. For a rank-$m$ center the
three fine-grid costs in the table give
\[
 t^{-2m}t^{-2m}P_1^m=p^{5m/7}.
\]
Including the outer grid and summing by rank gives
$p^{(2n+5r)/7}=p^{r+2(n-r)/7}$, up to logarithmic factors.
\end{remark}

\begin{samepage}
\begin{proposition}[Assembly]\label{fi:it:prop:assembly}
Fix $k\ge0$ and use \eqref{fi:it:eq:scales}. For all sufficiently
large $p$, there is an exact identity on the global maximal
Dolbeault domains of $L^p\otimes E$,
\begin{equation}\label{fi:it:eq:assembledidentity}
 I-\sum_{\iota=1}^{B_p}B_\iota\cH_\iota A_\iota
       =\bar\partial T+T\bar\partial.
\end{equation}
In every degree, its operators and differential products have
polynomial bounds. The $\cH_\iota$ have full buffers, bounded auxiliary dimensions,
defining-list lengths and coefficient multiplicities, and polynomial
defining-map and coefficient norm-comparison bounds. The number $B_p$ satisfies
\eqref{fi:it:eq:global}.
\end{proposition}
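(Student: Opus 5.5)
The plan is to assemble the identity \eqref{fi:it:eq:assembledidentity} by building a tree of restriction models from the leaves up, and then to read off $B_p$ from the counts already recorded. Fix $k$ and the scales \eqref{fi:it:eq:scales}, and take $p$ so large that $t<\tau_k$ and all radii \eqref{fi:it:eq:common-accuracy} fit inside the reserved margins.

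\textbf{Tree and buffers.} At the root, take a fixed finite atlas with a mesh-$t$ grid partition of unity and $q_{\max}=n$. Each outer support tuple with a chosen regular center of rank $m$ receives, in order:
\begin{enumerate}
\item the mesh-$t^2$ grid in $x$ (an ordinary cover);
\item for $\ell=0,\ldots,k-1$, a parameter cover by the product charts of Proposition~\ref{fi:cell:prop:targetproducts}, satisfying \eqref{fi:it:eq:relativecover}, followed by an ordinary mesh-$t$ grid in $b_0$;
\item a final ordinary grid of width $P_k^{-1/2}$ in the range of $Q_k$.
\end{enumerate}
When $m=0$ the tuple is terminal at once. This gives at most $H=2k+3$ insertions. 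The degree budget \eqref{fi:new:eq:budget} prescribes the orders, and the buffer allocation assigns every node a nested family of levels $\lambda_{a,j}$ using the same complete defining list. That list consists of inherited inequalities composed with the forgetful maps $\pi_x$, together with the new coordinate and tube inequalities. By the bounds on $N_h$ and $I_h$, the dimensions and list lengths stay bounded.

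\textbf{Leaves.} At each terminal tuple, the invariant \eqref{fi:it:eq:metricinvariant} holds with $|E_k|\le C_k/p$, and \eqref{fi:it:eq:terminalfamily} holds with $P_kt=1$. Subtracting the quadratic pluriharmonic correction then gives \eqref{fi:eq:gaugebound} on the whole terminal outer domain in the frame $e^{p\Gamma}e_L^{\otimes p}$.

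We then check \eqref{fi:sc:normequivalence}. The affine normalizations contribute Jacobians $t^{\pm O(1)}$, the Euclidean form metrics on the parameter sources are fixed, and the $E$-metric is fixed, so all factors are polynomial in $p$. Hence \eqref{fi:sc:terminalmodel} gives models of every order. Each such model contains $\rk E$ restriction occurrences per degree and has polynomial bounds, since the H\"ormander constant is the diameter, which is polynomial.

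The defining maps are polynomially bounded for two reasons. The Cauchy and Fekete bounds hold on the fixed larger compacts. The normalized tube functions $(z-\widehat f(\xi))/\eta$ carry the factor $\eta^{-1}\le Cp^{2}$.

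\textbf{Induction up the tree.} Apply Proposition~\ref{fi:new:prop:analyticinduction} inductively from the leaves to the root. Use Lemma~\ref{fi:it:lem:relative} at parameter covers; its cutoff lower bound is the witness-ball estimate following \eqref{fi:it:eq:witness-radius}. At ordinary covers use polynomially many support tuples. At the root, the collation identity of Section~\ref{fi:sc:global} then produces \eqref{fi:it:eq:assembledidentity}, with $T=C_{\rm out}\mathcal T R_{\rm in}$ taking values in the global maximal domain.

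\textbf{Counting.} The count is additive by \eqref{fi:sc:additivefactorization}, so the occurrence count obeys the recurrence \eqref{fi:it:eq:countrecurrence}. Telescoping it as in \eqref{fi:it:eq:telescope}, multiplying by the initial $t^{-2m}$, and summing over ranks through \eqref{eq:rank-parent-count} yields \eqref{fi:it:eq:global}. All polynomial exponents stay uniform because the depth is fixed; each layer multiplies the norms by at most a fixed power of $p$.

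\textbf{Expected obstacle.} The main difficulty is checking the hypotheses at each intermediate node uniformly. Each child domain must be a genuinely buffered polyhedron in the sense required by Lemma~\ref{fi:lem:restriction}, whose coverage property still holds after clipping by the inherited constraints. Also, the per-node metric invariant must be propagated through fiber powers in which only the first copy feeds $z_{\ell+1}$.

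For this I would argue as follows. Coverage is imposed on the unconstrained box $\mathcal A_{\rm out}$ before clipping. The error recursion \eqref{fi:it:eq:errorrecurrence} uses only first derivatives of $F^{t,(\ell)}_t$ on a convex ambient product and the tube width. The remaining copies are recorded in $y_{\ell+1}$, which affects neither the potential nor the count.
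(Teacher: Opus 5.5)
Your proposal is correct and follows the paper's own proof essentially step by step. The shared steps are:
\begin{itemize}
\item the same insertion tree with $H=2k+3$;
\item the terminal gauge bound from the quadratic identity, together with $P_kt=1$ and $|E_k|\le C_k/p$;
\item the polynomial normalization bounds;
\item the backward application of Proposition~\ref{fi:new:prop:analyticinduction}, using Lemma~\ref{fi:it:lem:relative} and the witness ball at parameter covers;
\item the additive count, telescoped into \eqref{fi:it:eq:global}.
\end{itemize}
One small slip: $\Gamma_\ell$ in \eqref{fi:it:eq:metricinvariant} already carries the factor $p$ (recall $\Gamma_0=pg^0_{t,\alpha}$). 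The terminal frame is therefore $e^{\Gamma}e_L^{\otimes p}$ with $\Gamma=\Gamma_k+P_k(q_*^*\Sigma_kQ_k-\tfrac12q_*^*\Sigma_kq_*)$, not $e^{p\Gamma}e_L^{\otimes p}$.
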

\end{samepage}
\begin{proof}
The geometric construction supplies the hypotheses of
Proposition~\ref{fi:prop:localization} through the following results.
\begin{center}
\small
\begin{tabular}{@{}p{0.27\textwidth}p{0.68\textwidth}@{}}
\hline
Required condition & Source\\
\hline
Domains and full buffers &
\eqref{fi:it:eq:relativeY}, the buffer allocation in
Section~\ref{fi:new:par:buffers}, and the witness ball
\eqref{fi:it:eq:witness-radius}\\[3pt]
Maximal operator domains and degrees &
Lemmas~\ref{fi:lem:composition} and~\ref{fi:it:lem:relative},
Proposition~\ref{fi:new:prop:analyticinduction}, and
\eqref{fi:new:eq:budget}\\[3pt]
Terminal metric comparison &
\eqref{fi:it:eq:terminalfamily},
\eqref{fi:it:eq:metricinvariant}--\eqref{fi:it:eq:errorrecurrence},
and the calculation below\\[3pt]
Restriction count &
\eqref{fi:sc:additivefactorization}, \eqref{fi:new:eq:childsum},
and \eqref{fi:it:eq:countrecurrence}--\eqref{fi:it:eq:global}\\
\hline
\end{tabular}
\end{center}
We verify the remaining geometric and norm requirements before
applying the recursive construction.

\emph{Defining functions and coverage.}
An ordinary grid adds scalar coordinate inequalities at its mesh.
A relative step adds bounded parameter coordinates and
\[
 \left|\frac{z_a-\widehat f_{i,t,a}(\xi)}{\eta}\right|
       <c(\lambda),\qquad 1\le a\le\dim z;
\]
the terminal grid adds scalar inequalities for
$P_k^{1/2}(Q_{k,a}-q_{*,a})$. These functions are holomorphic on
the reserved larger products. Old inequalities remain in their
old variables. Faces forget parameter copies, so the remaining
inequalities agree at matching levels, as required in
Section~\ref{fi:sc:domains}; its redundant auxiliary coordinate
bounds add no face boundary.

The witness ball \eqref{fi:it:eq:witness-radius} satisfies every
scalar tube inequality with bound $\eta/10$. Choose all domain-level
tube bounds above $1/10$ with fixed gaps. Coverage then holds on
the entire prescribed outer product, independently of retained
variables and constraints. The parameter partition has denominator
at least $c_k(2p)^{-4s_*}$ and the buffered supports required by
Lemma~\ref{fi:it:lem:relative}. Its derivatives have polynomial
bounds by Section~\ref{fi:it:sec:metric}.

The dimension and defining-list bounds, with $H=2k+3$ and
$q_{\max,*}=n+H$, follow from Section~\ref{fi:new:sec:degrees}.
Before normalization, the product maps and finite-jet families have
the bounds of Section~\ref{fi:it:par:scale-conventions}.
Normalizing the grids and tubes introduces only powers of
$t^{-1}$, $\eta^{-1}$ and $P_k^{1/2}$, hence polynomial bounds
for the defining maps and their fixed-order derivatives on the
reserved extensions. The same holds for ordinary cutoffs, affine
coordinate changes, their inverses and their Jacobians. Thus the
face and collation maps in the cited results have polynomial bounds.

\emph{Metric comparison.}
On a terminal outer cell, $|Q_{k,a}-q_{*,a}|\le CP_k^{-1/2}$
for every component $a$. Add to $\Gamma_k$ the holomorphic function
$P_k(q_*^*\Sigma_kQ_k-\tfrac12q_*^*\Sigma_kq_*)$.
The remaining scalar weight is exactly
\[
 P_k(Q_k-q_*)^*\Sigma_k(Q_k-q_*)+P_ktR^{t,(k)}_t+E_k.
\]
The first term is bounded in absolute value by $mC^2$, since the
Euclidean norm of $Q_k-q_*$ is at most
$\sqrt m\,CP_k^{-1/2}$. The second is bounded because $P_kt=1$,
and the third by \eqref{fi:it:eq:errorrecurrence}.
For a rank-zero center, Lemma~\ref{fi:it:lem:initial} gives the
bounded remainder directly, since $pt^{2k+5}=1$.
These are whole-domain bounds, including all retained variables,
and give \eqref{fi:eq:gaugebound}. The fixed $E$-metric and its
inverse are bounded on the atlas enlargement. Together with the
polynomial coordinate and Jacobian bounds and the Euclidean metric
in parameter directions, this proves
\eqref{fi:sc:normequivalence}. The coefficient multiplicity is
bounded by the fixed rank of $E$ and the bounded source dimensions.

\emph{Recursive construction.}
Terminal models of every required finite order follow from
\eqref{fi:sc:terminalmodel} and Lemma~\ref{fi:lem:restriction}.
Apply Proposition~\ref{fi:new:prop:analyticinduction} backwards
through the ordinary grids and relative covers. By
\eqref{fi:new:eq:budget}, an order-$q_{\max}$ parent receives
order-$(q_{\max}+1)$ children, whose homotopies are defined through degree
$q_{\max}+2$. Lemmas~\ref{fi:lem:composition} and~\ref{fi:it:lem:relative}
therefore apply on the full maximal Dolbeault domains in all source
variables, including retained variables and every parameter copy.
At a relative step the first copy is
$\xi_0=(b_0,V_0,W_0)$, and only $b_0$ is gridded. All other
coordinates remain source variables.
The induction reaches the atlas cover and gives
\eqref{fi:it:eq:assembledidentity}, with the asserted polynomial
bounds and full buffers. At each insertion,
\eqref{fi:new:eq:childsum} adds the child occurrences; the grid
counts above give \eqref{fi:it:eq:countrecurrence} and hence
\eqref{fi:it:eq:global}.
\end{proof}

\paragraph{Cohomology estimate.}
Apply Proposition~\ref{fi:prop:localization} to the identity supplied
by Proposition~\ref{fi:it:prop:assembly}. Its logarithmic rank factor,
combined with \eqref{fi:it:eq:global}, proves
\eqref{fi:eq:iteration} in Proposition~\ref{thm:quantitative}.
For a prescribed $\varepsilon>0$, choose a fixed $k$ with
$2(n-r)/(2k+5)<\varepsilon$; the strict margin absorbs the logarithmic
factor and proves \eqref{eq:near-endpoint} when $1\le r<n$.
Equivalently, for each fixed $\delta<1$,
\begin{equation}\label{fi:eq:all-delta}
 h^q(X,L^p\otimes E)=O_\delta\bigl(p^{n-\delta(n-r)}\bigr).
\end{equation}
To prove \eqref{fi:eq:all-delta}, choose a fixed $k$ with
\[
 \frac{2}{2k+5}<1-\delta.
\]
The strict exponent margin again absorbs the logarithmic factor.
The constants need not be uniform as $k\to\infty$, so the proof does
not permit a choice $k=k(p)$.

\begin{proof}[Completion of Theorem~\ref{thm:main} in the extreme ranks]
When $r=0$, choose a fixed finite atlas of simply connected coordinate
sets. Zero curvature permits a holomorphic frame of $L$ with constant
unit norm on each set. The induced frames of $L^p$ have locally constant unitary
transition functions, and all norm comparisons, cutoffs and restrictions
in Section~\ref{fi:sec:localization} are uniform in $p$. There are a
fixed number of holomorphic restriction factors. In the proof of
Proposition~\ref{fi:prop:localization}, a fixed Taylor cutoff now makes
the approximation error less than $1/2$, yielding $h^q=O(1)$.
When $r=n$, the bound $h^q=O(p^n)$ is the classical weak
holomorphic Morse estimate \cite{Demailly1985}.
\end{proof}

\section{Examples and the remaining question}\label{sec:consequences}

\subsection{Sharpness of the endpoint bound and variable rank}

\begin{example}[Sharpness of the section bound]\label{ex:sharpness}
Let $0\le r\le n$ and let $Y$ be a connected compact complex manifold
of dimension $n-r$. Set
\begin{equation}\label{eq:sharp-example}
 X=\PP^r\times Y,\qquad
 L=\operatorname{pr}_1^*\OO_{\PP^r}(1).
\end{equation}
Here $\operatorname{pr}_1$ is projection to the first factor.
The pullback Fubini--Study metric has curvature rank $r$.
Fix $y_0\in Y$. Holomorphic functions on $Y$ are constant, so
restriction to $\PP^r\times\{y_0\}$ is inverse to pullback on sections. Thus
\begin{equation}\label{eq:sharp-growth}
 h^0(X,L^p)=h^0(\PP^r,\OO_{\PP^r}(p))=\binom{p+r}{r}.
\end{equation}
The last equality counts homogeneous monomials in $r+1$ variables
\cite[Chapter~V, Theorem~15.5 and Corollary~15.6]{DemaillyCADG}.
For $r\ge1$ the dimension is $p^r/r!+O(p^{r-1})$; for $r=0$ it is
one. Hence the exponent in \eqref{eq:main-bound} cannot be reduced.
\end{example}

\begin{example}[A curvature kernel without continuous extension]
\label{ex:variable-rank}
Blow up $\PP^1\times\PP^1$ at a point and set
\[
 \pi:X\longrightarrow\PP^1\times\PP^1,\qquad
 f=\operatorname{pr}_1\circ\pi,\qquad
 L=f^*\OO_{\PP^1}(1).
\]
The induced metric has curvature $f^*\omega_{\mathrm{FS}}$.
In coordinates centered at the blown-up point, the two charts are
\[
 \pi(u,v)=(uv,v),\quad f(u,v)=uv;
 \qquad
 \pi(s,t)=(s,st),\quad f(s,t)=s.
\]
Thus $f$ has exactly one critical point, represented by $(u,v)=(0,0)$,
where the curvature rank drops from one to zero. Along the punctured
axes,
\[
 \ker df_{(u,0)}=\C\partial_u,\qquad
 \ker df_{(0,v)}=\C\partial_v.
\]
Since $\ker\Theta_L=\ker df$, the kernel line has two different
limits at the critical point. Off that point the kernel is a complex
line, that is, a real two-plane. Hence a continuous field of real
planes of dimension at least two contained in the curvature kernel
would have dimension two and coincide with this line off the critical
point, which is impossible near it. The kernel therefore contains the
tangent spaces of no smooth foliation of codimension one, and this
metric fails Bouche's foliation hypothesis.

Nevertheless $h^0(X,L^p)=p+1$. Indeed, a section descends away from
the blown-up point and extends across it in a local frame by Hartogs'
theorem \cite[Chapter~I, Theorem~3.28]{DemaillyCADG}, giving the same
section space as on $\PP^1\times\PP^1$.
\end{example}

The product has constant-rank curvature and lies in Bouche's
foliation setting. The blow-up lies on a projective surface, where
classical arguments give its cohomology bounds
(Appendix~\ref{sec:classical-comparison}). It illustrates the failure
of the foliation hypothesis for the specified metric.

\subsection{The intermediate degrees}\label{sec:remaining-question}
The endpoints follow from the independent geometric argument of
Section~\ref{sec:section-growth}; the surface case also has the
classical proof recalled in Appendix~\ref{sec:classical-comparison}. In dimension at least three, Riemann--Roch
controls only an alternating sum of intermediate dimensions.
The remaining question is:

\begin{question}\label{half:question:full-corank}
Let $X$ be a compact complex manifold of dimension $n\ge3$, let
$(L,h)$ have smooth curvature of maximal rank $1\le r<n$, and let
$E$ be a fixed holomorphic vector bundle. Must
\[
 h^q(X,L^p\otimes E)=O(p^r)\qquad(1\le q\le n-1)
\]
hold without positivity, constant rank, or a regular foliation in
the curvature kernel?
\end{question}

Bouche answers it under the foliation hypothesis.
Ma--Marinescu--Zelditch \cite[\S1.2, p.~181, after (1.30)]{MMZ2015}
conjecture that, for $\rk\Theta_L\le n-s$, $1\le s\le n$,
\begin{equation}\label{eq:mmz}
 h^q(X,L^p\otimes E)\le a(p)^s p^{n-s}
 \quad\text{for all sufficiently large }p
\end{equation}
holds for every nondecreasing positive function
$a(p)\to\infty$; they write $\varepsilon(p)$ for $a(p)$, which we
avoid because $\varepsilon$ denotes the exponent loss here. By
Remark~\ref{rem:slow-factor}, for $s=n-r$ this is equivalent to the
bound $O(p^r)$ asked for in Question~\ref{half:question:full-corank}.
Theorem~\ref{thm:main} does not give this quantifier: its fixed-depth
constants do not permit $k=k(p)$.
Indeed, $p^r\log p=O_\varepsilon(p^{r+\varepsilon})$
for every $\varepsilon>0$, but $p^r\log p$ is not $O(p^r)$.

\begin{remark}[The divergent-factor formulation]\label{rem:slow-factor}
For fixed geometric data, $q$, and $s\ge1$, the quantifier in
\eqref{eq:mmz} is equivalent to
\begin{equation}\label{eq:rank-defect}
 h^q(X,L^p\otimes E)=O(p^{n-s}).
\end{equation}
To see this, put $\gamma_p=p^{-(n-s)}h^q(X,L^p\otimes E)$.
If $(\gamma_p)$ is bounded, then $\gamma_p\le a(p)^s$ eventually
for every $a(p)\to\infty$.
If $(\gamma_p)$ is unbounded, choose $p_j\uparrow\infty$ with
$\gamma_{p_j}>(j+1)^s$ and set $a(p)=j$ on
$[p_j,p_{j+1})$, with value one before $p_1$.
This nondecreasing function tends to infinity but violates
\eqref{eq:mmz} at each $p_j$.
\end{remark}

\appendix

\section{Classical section growth}\label{app:serre-siegel}

We include a proof of the Serre--Siegel estimate
\cite[Lemma~6.13, p.~45]{DemaillyAM} to record its applicability
to arbitrary compact complex manifolds and to all positive powers.
The vanishing order $\operatorname{ord}_{y_0}\sigma$ is that of the
local coefficient of $\sigma$ in a holomorphic frame.

\begin{samepage}
\begin{lemma}[Linear bound for vanishing order]\label{lem:vanishing}
Let $Y$ be a connected compact complex manifold, let $A$ be a
holomorphic line bundle, and fix $y_0\in Y$. There is $c>0$ such that
every nonzero $\sigma\in H^0(Y,A^p)$, $p\ge1$, satisfies
$\operatorname{ord}_{y_0}\sigma\le cp$.
\end{lemma}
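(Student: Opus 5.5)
The plan is the classical Siegel--Schwarz argument, using a finite chain of coordinate polydiscs joining $y_0$ to every point of $Y$. First fix a finite cover of $Y$ by coordinate polydiscs $\Delta_1,\ldots,\Delta_K$ of radius $2$, each carrying a holomorphic frame $e_i$ of $A$, such that the concentric unit polydiscs $\Delta_i'$ still cover $Y$. The transition functions satisfy $e_j=g_{ij}e_i$, with $g_{ij}$ holomorphic and nonvanishing near $\overline{\Delta_i'\cap\Delta_j'}$ after a slight shrinking. Fix a constant $\Lambda\ge1$ with $|g_{ij}|,|g_{ij}|^{-1}\le\Lambda$ on these closures. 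In the frame $e_i^{\otimes p}$ write $\sigma=\sigma_i e_i^{\otimes p}$, so that
\[
 \sigma_j=g_{ij}^{\,p}\sigma_i
 \quad\text{on overlaps, and}\quad
 |\sigma_j|\le\Lambda^p|\sigma_i|.
\]
The linear growth in $p$ will come from this factor $\Lambda^p$.

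Normalize $\sigma$ by setting $M=\max_i\sup_{\Delta_i'}|\sigma_i|$; this is positive because $\sigma\ne0$, and after scaling we may take $M=1$. Choose an index $i_0$ and a point $x\in\overline{\Delta_{i_0}'}$ where the maximum is attained. If $\sigma$ vanished to order $\ge s$ at $y_0$, the aim is to propagate this smallness along a chain to $x$.

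\emph{Propagation step (Schwarz lemma).} Suppose $\sigma_i$ vanishes to order $s$ at a point $w$, and $\sup|\sigma_i|\le\Lambda^{p}$ on a polydisc of radius $2\rho$ about $w$. Then the one-variable Schwarz lemma on complex lines through $w$ gives $|\sigma_i|\le\Lambda^p\,2^{-s}$ on the polydisc of radius $\rho$. Using connectedness, compactness and a Lebesgue number, I will choose the radius $\rho$ and a finite chain of points $y_0=w_0,w_1,\ldots,w_\ell$ with $\ell$ fixed, such that every point of $Y$ lies within distance $\rho$ of some $w_j$ in some chart, and consecutive points are within distance $\rho$ in a common chart.

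The vanishing order itself does not transfer along the chain, so the induction must transfer smallness instead. The standard trick is to work with a variant of the Siegel argument. Suppose $|\sigma|\le\varepsilon$ in frame norm near $w_j$. Apply Hadamard three-circles, or the two-constants theorem, to $\log|\sigma_i|$ on a disc through $w_j$ and $w_{j+1}$, which is bounded above by $p\log\Lambda$ on the larger disc. This yields
\[
 \log\sup_{B(w_{j+1},\rho)}|\sigma|\le\theta\log\varepsilon+(1-\theta)\,C p
\]
with a fixed $\theta\in(0,1)$. Starting from $\log\sup_{B(y_0,\rho)}|\sigma|\le -s\log2+Cp$ and iterating $\ell$ times, I get
\[
 0=\log M\le\theta^\ell(-s\log2)+C'p,
\]
so $s\le cp$ with $c=C'/(\theta^\ell\log 2)$.

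The main obstacle is exactly this transfer step. The vanishing order is local, but the maximum may sit far away, so one needs a quantitative propagation whose constants are independent of $p$ apart from the $\Lambda^p$ factor. I would handle it with the two-constants theorem for plurisubharmonic functions on fixed overlapping polydiscs. Connectedness of $Y$ is what guarantees that a chain of bounded length reaches every chart.
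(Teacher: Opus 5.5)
Your proposal is correct and is essentially the paper's argument: Schwarz's lemma on complex lines at $y_0$ gives smallness $2^{-v}e^{Cp}$, the three-circle inequality propagates bounds along a fixed finite chain of coordinate balls, and each frame change costs only a factor $e^{Cp}$. The paper runs the chain in the opposite direction, pulling the normalization back to $B$ to get $\sup_B|f|\ge e^{-C'p}$ instead of pushing the Schwarz smallness forward to the maximizing point. One imprecision to fix: apply three circles to concentric balls about $w_j$ (via complex lines through $w_j$, with $B(w_{j+1},\rho)$ inside the intermediate ball), not to a single disc through $w_j$ and $w_{j+1}$, since you need a bound on the whole ball $B(w_{j+1},\rho)$.
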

\end{samepage}
\begin{proof}
Fix a smooth metric on $A$ and rescale $\sigma$ so that
$\sup_Y|\sigma|=1$.
On a finite fixed trivializing cover the local coefficients of
$\sigma$ are bounded by $e^{Cp}$, and changes of frame on compact
overlaps have the same bound. Fix a small coordinate ball $B$ at
$y_0$, and let $f$ be the coefficient of $\sigma$ in that frame.
For concentric balls in a trivializing chart, let $M(\rho)$ be the
supremum of the local coefficient on the ball of radius $\rho$.
If $0<\rho_1<\rho_2<\rho_3$, the three-circle theorem on complex
lines gives
\[
 M(\rho_2)\le M(\rho_1)^\theta M(\rho_3)^{1-\theta},
 \qquad \theta=\frac{\log(\rho_3/\rho_2)}{\log(\rho_3/\rho_1)}>0.
\]
Join $B$ to the centers of the finite cover by finitely many paths,
and subdivide each into overlapping coordinate balls. At each step
choose the next small ball inside the preceding intermediate ball.
The displayed inequality and the frame bounds propagate an upper
bound from $B$ to every member of the cover. Since this finite system
is independent of $p$ and $\sigma$, for some fixed $C,\theta_0>0$,
\[
 1\le e^{Cp}(\sup_B|f|)^{\theta_0}
 \quad\text{if }\sup_B|f|\le1.
\]
Taking the smallest of the finitely many products of exponents
justifies a common $\theta_0$. Thus $\sup_B|f|\ge e^{-C'p}$,
also when that supremum exceeds one.

Choose $B$ with twice its radius still in its trivializing chart.
If $f$ vanishes to order $v$ at $y_0$, Schwarz's lemma on radial
complex lines gives $\sup_B|f|\le 2^{-v}e^{C''p}$. Comparing the
two bounds proves $v\le cp$. Vanishing order is invariant under
a nonvanishing change of frame.
\end{proof}

Recall the definition of $\kappa(A)$ in Section~\ref{sec:conventions}.
The maximum in that definition exists because the ranks are integers
between zero and $\dim Y$.

\begin{samepage}
\begin{lemma}[Serre--Siegel growth]\label{lem:serre-siegel}
Let $A$ be a holomorphic line bundle on a connected compact complex
manifold $Y$. If $\kappa(A)\ge0$, then
\[
 h^0(Y,A^p)\le C p^{\kappa(A)}\qquad(p\ge1).
\]
Moreover, for some fixed integer $a\ge1$ and every $m\ge1$,
\[
 h^0(Y,A^{am})\ge\binom{m+\kappa(A)}{\kappa(A)}.
\]
If $\kappa(A)=-\infty$, all these spaces are zero.
\end{lemma}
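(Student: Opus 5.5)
The plan is to follow the classical Serre--Siegel argument, with Lemma~\ref{lem:vanishing} as the only analytic input. Set $\kappa=\kappa(A)$. For each $p$ with $H^0(Y,A^p)\ne0$, the complete section map $\Phi_p$ has generic rank at most $\kappa$, by the definition of $\kappa$. The upper bound will come from showing that sections are determined by a finite jet along a $\kappa$-dimensional slice, of jet order $O(p)$.

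\emph{Upper bound.} First choose $p_0$ and a point $y_0\notin B_{p_0}$ at which $d\Phi_{p_0}$ attains rank $\kappa$. Pick local coordinates $(w,w')\in\C^\kappa\times\C^{n-\kappa}$ centered at $y_0$ so that $\Phi_{p_0}$ restricted to the slice $\{w'=0\}$ is an immersion and the fibres of $\Phi_{p_0}$ are, near $y_0$, the level sets of $w$. The key claim is: for any $p$ and any $\sigma\in H^0(Y,A^p)$, the ratio $\sigma^{p_0}/s^{p}$, with $s$ a section of $A^{p_0}$ nonvanishing at $y_0$, is a meromorphic function that is constant along the fibres of $\Phi_{p_0}$ near $y_0$. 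This holds as long as $\kappa$ is also the rank of the map built from $A^{pp_0}$, which I arrange by passing to the multiple $pp_0$ and using maximality of $\kappa$: adjoining $\sigma^{p_0}$ to the system cannot raise the generic rank. Granting the claim, if $\sigma$ vanishes to order greater than $cp$ along the slice near $y_0$, then $\sigma^{p_0}$ vanishes to high order along the whole local fibre family. Fibre constancy then forces $\operatorname{ord}_{y_0}\sigma>cp$, so Lemma~\ref{lem:vanishing} gives $\sigma=0$. The restriction map to jets of order $cp$ of functions in the $\kappa$ variables $w$ is therefore injective, and this yields $h^0(Y,A^p)\le\binom{cp+\kappa}{\kappa}\le Cp^\kappa$.

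\emph{Main obstacle.} The delicate step is the fibre constancy for an arbitrary $p$, not only for multiples of $p_0$. I would handle it by working with $\sigma^{p_0}$ and the product system $H^0(A^{p_0})^{\otimes p}$, which lies in $H^0(A^{pp_0})$. I would show that if the ratio were nonconstant along some fibre, the enlarged system would have generic rank greater than $\kappa$, contradicting the definition. The vanishing order is then transferred from $\sigma^{p_0}$ back to $\sigma$ by dividing by $p_0$. I expect the bookkeeping here, namely choosing $y_0$ generic for all the relevant maps at once, to be the main difficulty; genericity over a countable family of analytic conditions handles it.

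\emph{Lower bound and vanishing.} Take $a=p_0$ and choose $\kappa+1$ sections $s_0,\dots,s_\kappa$ of $A^a$ whose ratios give $\kappa$ algebraically independent meromorphic functions; this is possible because the rank is $\kappa$. The monomials of degree $m$ in the $s_j$ are linearly independent in $H^0(Y,A^{am})$, since a linear relation among them would give an algebraic relation among the ratios. Counting these monomials gives $\binom{m+\kappa}{\kappa}$. Finally, if $\kappa=-\infty$, then by definition no positive power of $A$ has a nonzero section.
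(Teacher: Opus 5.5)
Your proposal is correct and follows the paper's argument. You adjoin $\sigma^{p_0}$ (the paper's $u^a$) to a subsystem of $H^0(A^{pp_0})$ containing the ratios that realize rank $\kappa$, use maximality of $\kappa$ to get fibre constancy, bound the jet order through Lemma~\ref{lem:vanishing}, and get the lower bound from monomials in algebraically independent ratios. The differences are cosmetic: constant-rank coordinates instead of the ratios $f_i$, and transferring vanishing order from $\sigma^{p_0}$ rather than showing $u$ itself is fibre-constant. The countable genericity you anticipate is unnecessary, since a subsystem's section map is a linear projection of the complete one and so has rank at most $\kappa$ at every point, not just generically.
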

\end{samepage}
\begin{proof}
Write $\kappa=\kappa(A)\ge0$. Choose a power $a$ and sections
$s_0,\ldots,s_\kappa$ whose ratios $f_i=s_i/s_0$ have independent
differentials at a point $y_0$ where $s_0\ne0$. Put
$f=(f_1,\ldots,f_\kappa)$ and complete it to product coordinates
$(f,w)$, with $w\in\C^{\dim Y-\kappa}$. Locally choose a
frame $e$ with $e^a=s_0$.
For $\sigma=e^pu\in H^0(Y,A^p)$, the functions
\[
 f_i=\frac{s_i s_0^{p-1}}{s_0^p},\qquad
 u^a=\frac{\sigma^a}{s_0^p}
\]
are ratios of sections of $A^{ap}$. By maximality of $\kappa$,
the map $(f,u^a)$ has rank at most $\kappa$. Its $w$ derivatives
therefore vanish. The same holds for $u$ on $\{u\ne0\}$ and hence
everywhere, so $u$ depends only on $f$.

Lemma~\ref{lem:vanishing} shows that the jet of $u$ in these
$\kappa$ coordinates through degree $\lfloor cp\rfloor$ determines
$\sigma$ injectively. Indeed, a zero jet gives vanishing order
greater than $cp$; a zero germ gives a zero global section by
connectedness. The coordinates and frame are independent of
$\sigma$, so this is a linear injection into a space of dimension
$\binom{\lfloor cp\rfloor+\kappa}{\kappa}=O(p^\kappa)$.

For the lower bound, the local submersion $f$ has open image.
Consequently no nonzero polynomial in its components vanishes, and
the sections
\[
 s_0^{m-|\alpha|}s_1^{\alpha_1}\cdots s_\kappa^{\alpha_\kappa},
 \qquad |\alpha|\le m,
\]
are linearly independent. Their number is
$\binom{m+\kappa}{\kappa}$. For $\kappa=0$ both arguments use a
one-dimensional space; the case $\kappa=-\infty$ is the definition.
\end{proof}

\section{Comparison with classical cases}
\label{sec:classical-comparison}

This appendix derives the rank bound on Fujiki's class $\mathcal C$
from earlier volume theory and records the classical corank-one and
surface cases. These arguments are independent of
Proposition~\ref{geo:semipositive-maximum} and Theorem~\ref{thm:main}.

For projective manifolds, Finski
\cite[Theorems~1.1 and~1.3]{Finski2026} describes Kodaira--Iitaka
dimension and multiplicity by non-pluripolar products of envelopes.
These are section-growth formulas; they do not estimate the
intermediate Dolbeault groups considered in Theorem~\ref{thm:main}.

\paragraph{The K\"ahler volume comparison.}
Let $X$ be connected and K\"ahler. Resolve a nonzero linear system
$W\subset H^0(X,L^p)$ by smooth blowups, obtaining a modification
$\pi:Y\to X$, a holomorphic map $f:Y\to\PP^N$, and an effective
divisor $D$ with
\[
 \pi^*L^p=f^*\OO_{\PP^N}(1)\otimes\OO_Y(D),
 \qquad d=\max\rk df.
\]
The manifold $Y$ is K\"ahler
\cite[Th\'eor\`eme~II.6, p.~202]{Blanchard1956}. Fix a K\"ahler
form $\omega$ on $Y$ and put
\[
 a=\frac{\pi^*\Theta_L}{2\pi},\qquad
 \beta=\frac{f^*\omega_{\mathrm{FS}}}{2\pi p},\qquad
 \theta_\varepsilon=a+\varepsilon\omega,
 \quad 0<\varepsilon\le1.
\]
Then $\rk a\le r$, $\beta\ge0$, and $\max\rk\beta=d$.
The current $\beta+\varepsilon\omega+p^{-1}[D]$ represents
$[\theta_\varepsilon]$ and is strictly positive with analytic
singularities. Thus this class is big. Its volume, defined by
non-pluripolar Monge--Amp\`ere mass, satisfies
\cite[Definition~1.17 and Proposition~1.20]{BEGZ2010}
\begin{equation}\label{eq:classical-volume-lower}
 \operatorname{Vol}([\theta_\varepsilon])
 \ge\int_Y(\beta+\varepsilon\omega)^n
 \ge c\varepsilon^{n-d}.
\end{equation}
The divisor contributes no non-pluripolar mass, and
$\int_Y\beta^d\wedge\omega^{n-d}>0$ gives $c>0$.

For the upper bound, let $\varphi_\varepsilon=P_{\theta_\varepsilon}(0)$
be the upper semicontinuous regularization of the supremum of all
$\theta_\varepsilon$-plurisubharmonic functions bounded above by zero.
This envelope has minimal singularities, so its non-pluripolar mass
equals the volume. The contact-set formula
\cite[Corollary~3.4(i)]{DiNezzaTrapani2021} gives
\[
 \bigl\langle(\theta_\varepsilon+
 i\partial\bar\partial\varphi_\varepsilon)^n\bigr\rangle
 =\mathbf1_{\{\varphi_\varepsilon=0\}}\theta_\varepsilon^n.
\]
The maximum principle makes $\theta_\varepsilon$ semipositive on
the contact set \cite[\S2, before Corollary~2.5]{BermanDemailly2012}.
At least $n-r$ eigenvalues of $a$ vanish, and all its eigenvalues
are bounded. Consequently
\begin{equation}\label{eq:classical-volume-upper}
 \operatorname{Vol}([\theta_\varepsilon])
 \le\int_{\{\theta_\varepsilon\ge0\}}\theta_\varepsilon^n
 \le C\varepsilon^{n-r}.
\end{equation}
Comparison with \eqref{eq:classical-volume-lower} as
$\varepsilon\downarrow0$ gives $d\le r$.

\paragraph{Fujiki's class and fixed twists.}
For $X\in\mathcal C$, pull the metric and linear system to a K\"ahler
modification \cite[Chapter~VI, Definition~12.10]{DemaillyCADG}.
The maximal mapping rank is preserved and curvature rank cannot
increase, so the preceding comparison applies. The fixed-bundle
reductions of Section~\ref{sec:section-growth} then give the
endpoint bounds using this comparison as their scalar input.
All their auxiliary spaces stay in $\mathcal C$: modifications
preserve this class, smooth divisor components belong to it
\cite[Lemma~4.6(2), p.~35]{Fujiki1978}, and projective bundles do
as well. For the last assertion, over a K\"ahler modification of
the base, the curvature of $\OO(1)$ plus a sufficiently large
multiple of the base K\"ahler form is positive.
Disconnected manifolds are handled componentwise.

\paragraph{The classical corank-one and surface cases.}
For $r=n-1$, the fixed-vector-bundle reduction can be made without
increasing the dimension. Work on connected $X$. If all
$H^0(X,L^a\otimes E)$ vanish for $a\ge1$, there is nothing to prove.
Otherwise choose sections $s_i\in H^0(X,L^{a_i}\otimes E)$,
$a_i\ge1$, with a nonzero mixed wedge
$u=s_1\wedge\cdots\wedge s_t$ of maximal length. Principalizing
its coefficient ideal on $\pi:Y\to X$ gives $\pi^*u=s_Dv$,
where $v$ is nowhere zero and $D$ has smooth components
\cite[Theorem~2.0.3]{Wlodarczyk2009}. Put $M=\pi^*L$.

For $s\in H^0(X,L^p\otimes E)$, maximality gives unique meromorphic
coefficients $\rho_i(s)$ of type $L^{p-a_i}$ with
$s=\sum_i\rho_i(s)s_i$ wherever $u\ne0$. Replacing $s_i$ by $s$
in the wedge and dividing by a nonzero local component of $v$
shows that $(\pi^*\rho_i(s))s_D$ extends holomorphically. Hence
\[
 H^0(X,L^p\otimes E)\lhook\joinrel\longrightarrow
 \bigoplus_{i=1}^tH^0(Y,M^{p-a_i}\otimes\OO_Y(D)).
\]
Since $\rk\Theta_M\le n-1$, the top curvature form vanishes.
The weak Morse inequality \cite{Demailly1985} therefore gives
$h^0(Y,M^j)=o(j^n)$. If $\kappa(M)=n$, the lower bound in
Lemma~\ref{lem:serre-siegel} would give a positive multiple of $j^n$
along a sequence of powers, a contradiction. Thus
$\kappa(M)\le n-1$, and the upper bound in that lemma gives
$h^0(Y,M^j)=O(j^{n-1})$. The divisor filtration
\eqref{ep:eq:divisor-filtration} has divisor terms of the same order
by the classical dimension bound on each smooth $(n-1)$-dimensional
component. Thus $h^0(X,L^p\otimes E)=O(p^{n-1})$; Serre duality
gives the other endpoint. This proof uses only the divisor exact
sequences, rather than the general endpoint assertion of this paper.

On a surface, $r=0$ is covered by Bouche
\cite[Corollaire~0.2]{Bouche1991}, $r=2$ by the ordinary dimension
bound, and $r=1$ by the preceding endpoints and Riemann--Roch.
Thus the all-degree surface estimate already follows from these
classical results.

\section*{Acknowledgments}

The author is grateful to Professor Fusheng Deng and Professor Xiaonan Ma  for their valuable
suggestions on related topics.

\section*{AI use disclosure} 
ChatGPT assisted in exploring possible constructions. The author conceived and developed the paper’s core ideas and proof strategy and takes full responsibility for its mathematical arguments, references, and conclusions.

\end{document}